\newtheorem{theorem}{Theorem}[section]
\newtheorem{prop}[theorem]{Proposition}
\newtheorem{lemma}[theorem]{Lemma}
\newtheorem{remark}[theorem]{Remark}
\newtheorem{question}[theorem]{Question}
\newtheorem{definition}[theorem]{Definition}
\newtheorem{cor}[theorem]{Corollary}
\newtheorem{example}[theorem]{Example}
\newenvironment{questionp}[1]{
  
  \theoremalt
}{\endtheoremalt}
\begin{document}
%\today\par

\title[Periodic symplectic and Hamiltonian diffeomorphisms]{Periodic symplectic and Hamiltonian diffeomorphisms on irrational ruled surfaces}
\author{Nicholas Lindsay}
\address{Department of Mathematics Education\\ Sungkyunkwan University \\ Seoul, Republic of Korea}
\email{ 20260280@skku.edu}

\author{Weiyi Zhang}
\address{Mathematics Institute\\  University of Warwick\\ Coventry, CV4 7AL, United Kingdom}
\email{Weiyi.Zhang@warwick.ac.uk}

\begin{abstract} 
We investigate when finite-order Hamiltonian diffeomorphisms extend to Hamiltonian circle actions, probing the transition from discrete to continuous symmetry in symplectic topology. Focusing on irrational ruled symplectic $4$-manifolds, we show that homologically trivial symplectic cyclic actions of order $k>2$ always extend to Hamiltonian $S^1$-actions, possibly after modifying the symplectic form. In contrast, we construct explicit symplectic involutions that cannot be so extended, even on minimal irrational ruled surfaces. These examples reveal geometric obstructions to extending discrete symmetries and highlight new exotic symplectic actions not equivalent to holomorphic ones. Our results also apply to higher-dimensional and non-cyclic group actions, and we establish several structural results on the isomorphism types of finite groups that can act on irrational ruled symplectic $4$-manifolds.

%We study the extension of homologically trivial symplectic or Hamiltonian cyclic  actions to Hamiltonian circle actions on irrational ruled symplectic $4$-manifolds. We construct explicit examples of symplectic involutions on minimal irrational ruled $4$-manifolds that do not extend to a  symplectic circle actions, even after altering the symplectic form. Higher dimensional examples are also constructed. 

%In contrast, we show that every homologically trivial symplectic  cyclic action of order greater than two on an irrational ruled $4$-manifold does admit  an extension to a Hamiltonian circle action, possibly with respect to a different symplectic form.

%Furthermore, we classify finite symplectomorphism groups acting trivially on the first homology group and show that the Klein $4$-group action does not extend to the three dimensional rotation group action. 
 \end{abstract}

\maketitle

\tableofcontents

\section{Introduction}
A fundamental theme across geometry, dynamics, and physics is the tension and interplay between discrete and continuous symmetries. This article is partially motivated by the following question of Kedra which connects the discrete and continuous symmetries in symplectic topology. More precisely, we consider:

% and asks whether finite Hamiltonian symmetries must arise as restrictions of circle group actions
%This article is motivated by a question of Kedra regarding the structure of Hamiltonian group actions on symplectic manifolds:

\begin{question}\label{kedra}
For  a closed symplectic manifold $(M,\omega)$, does every Hamiltonian cyclic action extend to a Hamiltonian circle action?  
\end{question}

Here a Hamiltonian cyclic action refers to one generated by a periodic Hamiltonian diffeomorphism. Closed symplectic manifolds with a non-trivial Hamiltonian circle actions are symplectically uniruled \cite{Mc}. Moreover, in dimension $4$ uniruled symplectic manifolds are ruled\footnote{We follow the current convention that a ruled surface $S$ is a smooth projective surface birational to $C \times \mathbb{P}^1$, and it is geometrically ruled if additionally $b_{2}(S) = 2$. Note that in the terminology of \cite{Ma}, a ruled surface means geometrically ruled in our notation. A symplectic $4$-manifold is called (geometrically) ruled if it is diffeomorphic to a (geometrically) ruled algebraic surface. Note that an irrational ruled surface is minimal if and only if it is geometrically ruled.}.

Recent work, particularly in dimension $4$, has shed light on this question ({\it e.g.} \cite{CKr, CK, LLW}). Our approach also focuses initially on dimension $4$, guided by two key  viewpoints. First,  symplectic $4$-manifolds naturally divide into three classes: rational, irrational ruled, and non-uniruled, each requiring different treatment and yielding distinct answers.  
 Second, the behavior of involutions (order $2$) differs from high-order cyclic actions and should be treated separated.

For rational $4$-manifolds, {\it i.e.} symplectic manifolds which are diffeomorphic to $\mathbb {CP}^2\# k\overline{\mathbb {CP}^2}$ or $S^2\times S^2$,  the answer to Question \ref{kedra} is affirmative when $M=\mathbb {CP}^2$ \cite{Chen} or when $M$ is a rational geometrically ruled surface \cite{CKr} (that is, diffeomorphic to an $S^2$-bundle over $S^2$). However, Chiang and Kessler \cite{CK} constructed a homologically  trivial symplectic involution on a six-point blowup of $\mathbb {CP}^2$  that does not extend to a Hamiltonian circle action. This example was later shown to be Hamiltonian in \cite[Corollary 1.8]{LLW} and \cite[Remark 7]{AS}. 

No known examples exist of non-extendable symplectic $\mathbb Z_k$-actions with $k>2$ in the  rational case. In contrast, for non-uniruled symplectic manifolds,  Question \ref{kedra} aligns with a conjecture asserting that such manifolds admit no Hamiltonian torsion \cite{AS}. Progress on this front includes positive results  for symplectically aspherical manifolds \cite{Pol}, K3 surfaces \cite{CKw}, and symplectic Calabi-Yau and negative monotone symplectic manifolds of higher dimensions \cite{AS}. For more background, see Section \ref{back}. 

In dimension four, we focus on irrational ruled symplectic $4$-manifolds, {\it i.e.} $(M,\omega)$ symplectic blowups of $S^2$-bundles over  Riemann surfaces $\Sigma$ of genus at least one. For some results, we assume the base genus is at least $2$.  Since Hamiltonian diffeomorphisms are homologically trivial, we consider an alternative version of Question \ref{kedra}.

\begin{questionp}{\ref{kedra}$'$}\label{kedra'}
For an irrational ruled symplectic $4$-manifold $(M,\omega)$ with base genus at least two, does every homologically trivial symplectic cyclic action extend to a Hamiltonian circle action? 
\end{questionp}

The genus condition is essential. When the base genus is one, a simple example of homologically trivial symplectic cyclic action arises by taking the product of a finite-order rotation on the first factor of $T^2=S^1\times S^1$ with the trivial action on $S^2$. This action extends to a symplectic but not Hamiltonian circle action, even after modifying the symplectic form. In contrast, when the base genus is at least two, \cite{CK}  constructs a homologically trivial symplectic involution on a non-minimal irrational ruled symplectic $4$-manifold with $b_2(M)=5$, {\it i.e.}  an $S^2$-bundle over  $\Sigma_g$ blown up at three points, which does not extend to a Hamiltonian circle action.

Our first main result strengthens this by showing that such  involutions also exist on minimal irrational ruled surfaces, {\it i.e.}  irrational ruled symplectic $4$-manifolds of $b_2(M)=2$. This contrasts with the positive result of Chiang-Kessler for rational ruled $4$-manifolds. In fact, we establish the following stronger statement (see Corollary \ref{invtocircle}).

\begin{theorem}\label{main1}
There exist homologically trivial symplectic involutions on irrational ruled $4$-manifolds with base genus at least two and any $b_2\ge 2$, such that they cannot be extended to symplectic circle actions, even after modifying the symplectic form. 
\end{theorem}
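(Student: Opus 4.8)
The plan is to construct, on a minimal irrational ruled surface $(M,\omega)$ with $M$ an $S^2$-bundle over $\Sigma_g$ ($g\geq 2$), an explicit symplectic involution $\tau$ that is homologically trivial but whose fixed-point structure is incompatible with extending to any symplectic circle action. Let me sketch the guiding idea before committing to the steps.

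First I would build the involution by fibered construction. Take $M\to\Sigma_g$ an $S^2$-bundle and pick a fixed-point-free holomorphic involution $\sigma$ on the fiber $S^2=\mathbb{CP}^1$ (the antipodal map), combined with the identity on the base, or more flexibly, let $\tau$ act fiberwise by an involution that varies over $\Sigma_g$. To force homological triviality one wants $\tau_*=\mathrm{id}$ on $H_2(M;\mathbb{Z})$, which constrains how $\tau$ acts on the fiber and section classes; a fiberwise rotation by $\pi$ that preserves the orientation of each fiber and fixes the two sections (say the zero and infinity sections of a line-bundle $S^2$-bundle) is a natural candidate, as it acts trivially on homology. To obtain an involution that is genuinely non-extendable I would instead arrange the \emph{fixed-point set} to consist of isolated points or a configuration that no $S^1$-action can produce, since any symplectic (hence Hamiltonian, after normalizing the moment map) circle action on a ruled surface has very constrained fixed loci. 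Concretely, I expect to use a construction where $\tau$ has fixed points whose local rotation numbers or whose number/index data violate the localization constraints forced by an $S^1$-action.

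The key steps, in order, are: (1) define $\tau$ precisely as a symplectomorphism of $(M,\omega)$, verifying $\tau^2=\mathrm{id}$ and $\tau^*\omega=\omega$; (2) compute $\tau_*$ on $H_*(M;\mathbb{Z})$ and confirm it is the identity, so $\tau$ is homologically trivial; (3) analyze $\mathrm{Fix}(\tau)$ and extract an invariant of the fixed-point data (Euler characteristic, signature of the normal bundle, or the collection of isotropy weights) that is preserved under symplectic conjugation; (4) show that for \emph{any} symplectic circle action on \emph{any} symplectic form $\omega'$ in the allowed deformation class on $M$, the fixed-point data of the order-$2$ element in that circle differs from $\mathrm{Fix}(\tau)$. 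Step (4) is where I would invoke the classification of symplectic/Hamiltonian $S^1$-actions on ruled surfaces together with a rigidity statement: the fixed-point contribution of a generator of the $\mathbb{Z}_2\subset S^1$ is determined by the moment map critical structure, and I would argue no such structure reproduces the isolated fixed configuration of $\tau$. The phrase ``even after modifying the symplectic form'' signals that the argument at Step (4) must be topological or deformation-invariant rather than tied to a specific $\omega$, so I would phrase the obstruction in terms of smooth/equivariant cobordism or fixed-point data that depends only on the diffeomorphism type of $\tau$ and $M$.

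The hard part will be Step (4): ruling out \emph{all} circle-action extensions across \emph{all} cohomologous or deformation-equivalent symplectic forms simultaneously. This requires a clean invariant that (a) is computable for my explicit $\tau$, (b) is forced to take a specific value for the order-$2$ element of any symplectic $S^1$-action on a ruled surface, and (c) separates the two. I anticipate the natural candidate is the structure of $\mathrm{Fix}(\tau)$ as a symplectic submanifold together with its normal weights: an $S^1$-action's finite-order elements fix unions of the sections and fibers in a controlled way (so their fixed loci contain positive-dimensional symplectic surfaces carrying specific homology classes), whereas my $\tau$ should be designed to have, for instance, only isolated fixed points or fixed surfaces in classes no $S^1$-action can fix. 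Establishing this impossibility uniformly over the deformation class is the crux; I would lean on the known structure theory for $S^1$-actions on irrational ruled surfaces (fixed surfaces are sections or exceptional curves, base genus constrains the action) to convert the problem into a finite check, and on the genus-$\geq 2$ hypothesis to eliminate the torus-like degrees of freedom that made the genus-one case extendable.
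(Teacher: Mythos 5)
Your overall framework is the right one --- obstruct the extension by comparing the fixed-point set of the involution with the rigid structure of fixed sets of homologically trivial symplectic circle actions on minimal irrational ruled surfaces --- and your Step (4) is indeed how the paper closes the argument (via its Proposition on fixed sections, any homologically trivial symplectic $S^1$-action on such an $M$ with base genus $\geq 2$ fixes exactly two disjoint sections of genus $g$, and this is deformation-independent). But the heart of the theorem is the construction you defer, and the candidate fixed-point configurations you float would not work. A homologically trivial symplectic involution on a minimal irrational ruled surface with base genus $\geq 2$ necessarily preserves each fiber of the sphere fibration and acts on it as a nontrivial elliptic M\"obius involution, so its fixed set meets \emph{every} fiber in exactly two points: isolated fixed points are impossible. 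Likewise, the homology class of the fixed surface cannot separate the two cases: the fixed set is forced to represent $2[\Sigma]$ (up to fiber classes) whether it is one bisection or two sections, so ``fixed surfaces in classes no $S^1$-action can fix'' is not an available invariant here.

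The missing idea is \emph{connectedness} (equivalently, genus) of the fixed set. The paper takes the connected double cover $\widetilde{\Sigma}\to\Sigma_g$ with deck involution $\tau$, chooses a $\tau$-equivariant retraction of $\widetilde{\Sigma}$ onto an invariant circle $C$ on which $\tau$ acts by the half-turn, and uses this to embed $\widetilde{\Sigma}$ into $\Sigma_g\times S^2$ as a bisection $S_0$ meeting each fiber in a pair of antipodal points. The involution $q$ is then the fiberwise rotation by $\pi$ about the two points $S_0\cap F$; averaging the product form and adding a large pullback from the base gives an invariant symplectic form. The fixed set of $q$ is the \emph{connected} surface $S_0$ of genus $2g-1$. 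If $q$ extended to a symplectic circle action, that action would be homologically trivial and would fix two disjoint sections, each a closed $2$-dimensional submanifold contained in $\mathrm{Fix}(q)=S_0$; since $S_0$ is connected this is impossible. Without this (or an equivalent) construction producing a connected fixed locus, your proposal does not yet yield the theorem; the $b_2>2$ cases then follow by equivariant blow-up, as in the paper.
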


These involutions are Hamiltonian diffeomorphisms assuming a positive answer to a question of Donaldson (Question \ref{don}) in the case of minimal ruled $4$-manifolds, and we discuss this further in Section \ref{back}. 

Our construction introduces a novel obstruction to cyclic-to-circle extension by analyzing the fixed point set. A homologically trivial symplectic circle action on a minimal irrational ruled surface with base genus at least two has a fixed point set consisting of two disjoint sections. This  also holds for  homologically trivial symplectic  $\mathbb Z_k$-actions with $k>2$ (Proposition \ref{sections}). However, we construct homologically trivial symplectic involutions with connected fixed point sets, a phenomenon that distinguishes them sharply from their higher-order cyclic counterparts (see Theorem \ref{exampleir}).

We further explores algebro-geometric realization of our construction in Section \ref{agreal} using Maruyama's classification of automorphism groups of ruled surfaces \cite{Ma}.

By comparing Theorem \ref{exampleir} with Lemma \ref{holfix}, our construction answers the following question posed by Weimin Chen \cite[Question 9]{chenq}: 

\begin{question}\label{chenq9}
Does there exist a symplectic finite group action on a K\"ahler surface which is not equivalent to a holomorphic action?
\end{question}
We affirmatively answer this as follows:

\begin{cor}\label{q9chen}
There exists a symplectic involution on a K\"ahler surface $(C, \omega_C) \times (\mathbb P^1, \omega_{\mathrm{FS}})$, where $(C, \omega_C)$ is a projective curve of genus at least one, which is not smoothly equivalent to any holomorphic action. 
\end{cor}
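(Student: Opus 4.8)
The plan is to produce an explicit symplectic involution on $(C,\omega_C)\times(\mathbb P^1,\omega_{\mathrm{FS}})$ that is homologically trivial and whose fixed-point set is connected, and then to argue that these two features together are incompatible with any holomorphic action, even up to smooth conjugacy. The construction itself should come directly from Theorem \ref{exampleir}, which (per the introduction) furnishes homologically trivial symplectic involutions with connected fixed-point sets on minimal irrational ruled surfaces; taking the minimal surface to be the product $C\times\mathbb P^1$ with $C$ of genus at least one realizes the needed example. So the first step is simply to instantiate Theorem \ref{exampleir} on this particular Kähler surface and record that the resulting involution $\tau$ is symplectic, homologically trivial, and has connected fixed locus $\mathrm{Fix}(\tau)$.

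The heart of the argument is the comparison with Lemma \ref{holfix}, which I expect constrains the fixed-point set of any holomorphic (or, more precisely, any biholomorphic finite-order) automorphism of such a surface. The strategy is a dichotomy: the fixed locus of a holomorphic involution on a Kähler surface is a disjoint union of isolated points and smooth complex curves, and on a minimal irrational ruled surface one expects (by Lemma \ref{holfix}) that a homologically trivial holomorphic involution must fix a disjoint union of \emph{two} sections, hence a \emph{disconnected} set, matching the behavior in Proposition \ref{sections} for the $k>2$ case. Since smooth conjugacy preserves the diffeomorphism type of the fixed-point set, a connected $\mathrm{Fix}(\tau)$ cannot be smoothly conjugate to a holomorphic action whose fixed set is disconnected. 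Thus I would first invoke Lemma \ref{holfix} to pin down the topology of $\mathrm{Fix}$ for all holomorphic models, and then contrast it with the connected $\mathrm{Fix}(\tau)$ coming from Theorem \ref{exampleir}.

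To make the smooth-conjugacy obstruction rigorous, I would record the standard fact that if $\phi$ is a diffeomorphism intertwining $\tau$ with a holomorphic involution $\sigma$ (i.e. $\phi\circ\tau=\sigma\circ\phi$), then $\phi$ carries $\mathrm{Fix}(\tau)$ diffeomorphically onto $\mathrm{Fix}(\sigma)$. Consequently the number of connected components of the fixed locus is a smooth conjugacy invariant. Since our $\tau$ has a connected fixed set but every homologically trivial holomorphic involution on this surface has a fixed set with (at least) two components, no such $\phi$ can exist, which is exactly the assertion that $\tau$ is not smoothly equivalent to any holomorphic action.

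The main obstacle I anticipate is establishing the complete classification of fixed-point behavior for \emph{all} relevant holomorphic models on $C\times\mathbb P^1$, rather than for a single favored complex structure: one must ensure the homological triviality hypothesis genuinely forces disconnectedness for every biholomorphic involution of every complex structure making the underlying manifold a minimal irrational ruled surface. This is where Lemma \ref{holfix}, combined with the product structure of $C\times\mathbb P^1$ and the fibration forcing any finite-order biholomorphism to respect the ruling (so that its action downstairs on $C$ and fiberwise on $\mathbb P^1$ can be analyzed separately), should do the work; the remaining care is to confirm that homological triviality rules out the case of isolated fixed points or a single fixed section and leaves only the two-section configuration, thereby sealing the contradiction with connectedness.
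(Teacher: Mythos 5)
Your proposal matches the paper's proof: instantiate Theorem \ref{exampleir} on $C\times\mathbb P^1$ (using that cohomologous symplectic forms on ruled surfaces are symplectomorphic, to reduce to the product form), then contrast the connected fixed locus with the two disjoint sections forced by Lemma \ref{holfix}, the number of components of the fixed set being a smooth-conjugacy invariant. The obstacle you raise in your last paragraph does not arise because the corollary fixes the product complex structure on $C\times\mathbb P^1$; indeed the stronger statement quantifying over all complex structures on the underlying smooth manifold would be genuinely delicate, since Proposition \ref{algebraicrel} exhibits holomorphic involutions of non-product ruled surfaces $\mathbb P(\mathcal O\oplus L)$ whose fixed locus is a connected bisection.
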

\begin{proof}
 For irrational ruled $4$-manifolds, cohomologous symplectic forms are symplectomorphic \cite{Mciso, LL}. In particular, any symplectic form on $C\times \mathbb P^1$ is symplectomorphic to a product form. Therefore, we may consider the product  $(C, \omega_C) \times (\mathbb P^1, \omega_{\mathrm{FS}})$, where $\omega_C$ is an area form on $C$ with total area $K\gg 0$. By Theorem \ref{exampleir}, this product admits a symplectic involution whose  fixed point set is connected. However, by Lemma  \ref{holfix}, any holomorphic action on the K\"ahler surface $C \times \mathbb P^1$ must have a disconnected fixed point set. It follows that the symplectic involution constructed in Theorem \ref{exampleir} cannot be smoothly equivalent to any holomorphic action. 
\end{proof}

One advantage of our construction is that we can extend our results to higher dimensions, in the spirit of  \cite[Question 1.10]{LLW}. The proof of non-extendability of Chiang-Kessler's involution relies on Karshon's description of $S^1$-Hamiltonian $4$-manifolds \cite{Kar}, which is not available in higher dimensions. The following corollary gives a sample how the higher dimensional examples could be constructed and how the non-extendability could be proved. The fixed point set of our involutions is non-empty as any finite order fixed point free symplectic action, such as the rotations on tori, automatically cannot extend to Hamiltonian circle actions.

\begin{cor}\label{main1'}
There exist homologically trivial symplectic involutions with non-empty fixed point set on $\Sigma_g\times S^2\times \Sigma_h$ where $g\ge 2$ and $h\ge 0$, which cannot be extended to a  Hamiltonian circle action, even with a possibly different symplectic form $\omega'$. 
\end{cor}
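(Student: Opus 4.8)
The plan is to build the example as a product and to replace Karshon's four-dimensional classification, used by Chiang--Kessler, by two general structural facts about Hamiltonian circle actions together with Proposition \ref{sections}.

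First I would construct the involution. Let $\tau$ be the homologically trivial symplectic involution on $(\Sigma_g\times S^2,\omega_0)$ produced by Theorem \ref{exampleir}, whose fixed locus $\mathrm{Fix}(\tau)$ is connected. Set $\Phi=\tau\times\mathrm{id}_{\Sigma_h}$ on $M=\Sigma_g\times S^2\times\Sigma_h$ with a product symplectic form. Then $\Phi$ is a symplectic involution; it is homologically trivial by the K\"unneth formula, since $\tau^\ast=\mathrm{id}$ and $\mathrm{id}_{\Sigma_h}^\ast=\mathrm{id}$; and $\mathrm{Fix}(\Phi)=\mathrm{Fix}(\tau)\times\Sigma_h$ is connected and non-empty. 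Applying the Lefschetz fixed point formula to the homologically trivial $\tau$ gives $\chi(\mathrm{Fix}(\tau))=\chi(\Sigma_g\times S^2)=4-4g$, so $\mathrm{Fix}(\tau)$ is a connected surface of genus $2g-1$, and hence $\mathrm{Fix}(\Phi)\cong\Sigma_{2g-1}\times\Sigma_h$ smoothly, independently of the symplectic form.

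For non-extendability I would argue by contradiction: suppose $\Phi$ is the order-two element $\sigma$ of a (necessarily non-trivial) Hamiltonian circle action on $(M,\omega')$ for some symplectic form $\omega'$. Since $\sigma$ is a finite-order $\omega'$-symplectomorphism, $N:=\mathrm{Fix}(\sigma)=\mathrm{Fix}(\Phi)$ is an $\omega'$-symplectic submanifold, and as $S^1$ is abelian it preserves $N$; restricting yields an induced Hamiltonian circle action on the connected symplectic $4$-manifold $(N,\omega'|_N)\cong\Sigma_{2g-1}\times\Sigma_h$, with $\mathrm{Fix}(S^1)\subseteq N$. The two dimension-independent facts I would invoke are: a non-trivial Hamiltonian circle action has disconnected fixed set, the moment map attaining its minimum and maximum on distinct connected components $F_{\min}\neq F_{\max}$; and, by perfect Morse--Bott theory of the moment map, $b_1(M)=b_1(F_{\min})$.

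The conclusion then follows from a dichotomy on the induced action, and this is where I expect the main obstacle to lie. If the induced action is trivial, then $\mathrm{Fix}(S^1)=N$ is connected, contradicting the disconnectedness of the fixed set of the non-trivial ambient action. If it is non-trivial, I split on $h$. For $h\geq 1$ the manifold $\Sigma_{2g-1}\times\Sigma_h$ is aspherical, hence symplectically aspherical, so it admits no non-trivial Hamiltonian circle action (such an action would produce an invariant gradient sphere of positive $\omega'$-area, a nonzero spherical class), a contradiction. For $h=0$ the manifold $N\cong\Sigma_{2g-1}\times S^2$ is a minimal irrational ruled surface of base genus $2g-1\geq 2$, so Proposition \ref{sections} forces $\mathrm{Fix}(S^1)$ to be two disjoint sections, each of genus $2g-1$; in particular $F_{\min}$ has genus $2g-1$. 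But $b_1(M)=b_1(F_{\min})$ together with $b_1(M)=2g$ forces $F_{\min}$ to have genus $g$, and $2g-1\neq g$ for $g\geq 2$ is the desired contradiction. The delicate case is precisely this last one, $h=0$ with non-trivial induced action, since it is exactly where the unavailable Karshon classification must be replaced by the combination of Proposition \ref{sections} with the Betti-number identity for Hamiltonian circle actions.
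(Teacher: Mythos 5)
Your construction is the paper's own (the product involution $\tau\times\mathrm{id}_{\Sigma_h}$ with connected fixed locus $\mathrm{Fix}(\tau)\times\Sigma_h$), and your overall strategy for non-extendability is also the same: confine $\mathrm{Fix}(S^1)$ to the connected symplectic submanifold $N=\mathrm{Fix}(\Phi)$ and derive a contradiction from the topology of the extremal component of the moment map. The differences are in the endgame. For $h\ge 1$ the paper argues that $F_{\min}$ and $F_{\max}$ must have fundamental group $\pi_1(\Sigma_g\times\Sigma_h)$ (by \cite{Lh}) yet cannot be $4$-dimensional since $N$ is connected, whereas you rule out a non-trivial induced action outright by asphericity of $\Sigma_{2g-1}\times\Sigma_h$; both work, and your version is arguably more robust since it avoids the "not a surface group" case analysis. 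For $h=0$ the paper uses \cite{Lh} plus the minimal-genus fact that a symplectic surface in $\Sigma_{2g-1}\times S^2$ has genus $0$ or at least $2g-1$, while you instead restrict the circle action to $N$, invoke the classification of fixed sets of homologically trivial symplectic circle actions on minimal irrational ruled surfaces (two sections of genus $2g-1$), and compare with $b_1(M)=b_1(F_{\min})=2g$. This trades one piece of $4$-dimensional input for another of comparable depth.

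Two small points you should make explicit. First, to apply the "two disjoint sections" statement to $(N,\omega'|_N)$ you need to know that an \emph{arbitrary} symplectic form on the smooth manifold $\Sigma_{2g-1}\times S^2$ presents it as an irrational ruled symplectic $4$-manifold in the sense of the paper; this is true by the classification of symplectic structures on ruled surfaces (\cite{Mciso, LL}, already cited in the paper), but it is an extra step your argument relies on. Second, Proposition \ref{sections} as stated concerns $\mathbb{Z}_k$-actions with $k>2$ and says the fixed set \emph{contains} two sections; for the circle-action version you should either cite the well-known statement the paper records at the start of Section 4, or note that for $b_2=2$ the proof of Proposition \ref{sections} shows the fixed set meets every fiber in exactly two points, so it \emph{equals} the two sections, and $\mathrm{Fix}(S^1|_N)$ sits inside the fixed set of any $\mathbb{Z}_3\subset S^1$. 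Either way $F_{\min}$ is a point or a genus-$(2g-1)$ section, and $b_1\in\{0,4g-2\}\neq 2g$ for $g\ge 2$, so your contradiction stands.
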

\begin{proof}
Consider the symplectic structure $\omega_M$ on $M=\Sigma_g\times S^2$, and let $R_M$ be a homologically trivial symplectic involution on $M$ as constructed in Theorem \ref{main1}. Define the product symplectic form $(\omega_M, \omega_{h})$ on $M\times \Sigma_h$, and consider the involution $R_M\oplus \mathrm{id}$ on this product. The fixed point set of the involution  is $S_0\times \Sigma_h$ where $S_0\subset M$ is the connected fixed point set of $R_M$, which is a surface of genus $2g-1$. 

Suppose first that $h=0$. In this case, the fixed point set lies in $S_0\times S^2$. For any Hamiltonian circle action on a closed symplectic manifold, the fixed point set must have at least two connected components corresponding to the maximal and minimal levels of the moment map. Each of these components must be a symplectic submanifold with fundamental group $\pi_1(\Sigma_g\times S^2\times S^2)=\pi_1(\Sigma_g)$ \cite{Lh}. But on the ruled surface $\Sigma_{2g-1}\times S^2$, any $2$-dimensional symplectic submanifold must either have genus zero or genus at least $2g-1>g$. Thus, $\Sigma_g$ cannot be realized as a symplectic submanifold of $S_0\times S^2$, and the involution $R_M\oplus \mathrm{id}$ cannot be extended to a  Hamiltonian circle action when $h=0$. 

Now consider $h>0$, if the involution were extendable, the fixed point set $\Gamma$ of the corresponding Hamiltonian circle action would be a symplectic submanifold of $S_0\times \Sigma_h$. However, since $S_0\times \Sigma_h$ is connected while $\Gamma$ must be disconnected  (again corresponding to the extremal values of the moment map), $\Gamma$ cannot contain a $4$-dimensional component. On the other hand, both the maximal and minimal level sets must have fundamental group $\pi_1(\Sigma_g\times \Sigma_h)$, which is not a surface group for $h>0$. This contradiction implies that $R_M\oplus \mathrm{id}$ is not extendable to a  Hamiltonian circle action in this case either. 
\end{proof}

A more general construction of potential higher dimensional examples,  based on the same idea as Theorem \ref{main1}, is presented in Proposition \ref{highex}, with explicit $6$-dimensional examples given in Corollary \ref{6dexample}.

In contrast to the case of symplectic involutions,  symplectic $\mathbb Z_k$-actions with $k>2$ exhibit significantly different behavior. %We establish two further main results that address such higher-order actions. 
The main result, Theorem \ref{theorem}, concerns 
 homologically trivial symplectomorphisms:

\begin{theorem} \label{main2}
Let $(M,\omega)$ be an irrational ruled symplectic $4$-manifold with $b_{2}(M)=2$, and suppose the base has genus $g 
\geq 2$. Let $f : M \rightarrow M$ be a homologically trivial symplectomorphism generating an effective $\mathbb{Z}_k$-action with $k>2$. Then  the $\mathbb{Z}_{k}$-action extends to a Hamiltonian $S^1$-action for some possibly different symplectic form $\omega'$, under the standard embedding $\mathbb{Z}_k \subset S^1$.
 \end{theorem}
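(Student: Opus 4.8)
The plan is to show that, after a harmless change of symplectic form, the generator $f$ is literally a fiberwise rotation of an $S^2$-bundle, so that the cyclic action is the group of $k$-th roots of unity sitting inside the circle that rotates the fibers. The first step is to see that $f$ preserves the ruling and acts trivially on the base. Choosing an $f$-invariant $\omega$-compatible almost complex structure $J$ (average an arbitrary one over $\mathbb{Z}_k$), the fibers are the unique $J$-holomorphic curves in the fiber class $[F]$, which for $g\ge 2$ is characterized homologically; hence $f$ permutes the fibers and descends to a finite-order diffeomorphism $\bar f$ of $\Sigma_g$. Since $\pi_*\colon H_1(M)\to H_1(\Sigma_g)$ is an isomorphism intertwining $f_*$ and $\bar f_*$, and $f$ is homologically trivial, $\bar f$ acts trivially on $H_1(\Sigma_g)$. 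By the torsion-freeness of the Torelli group for $g\ge 2$ (equivalently the contractibility of $\mathrm{Diff}_0(\Sigma_g)$, Earle--Eells), a finite-order mapping class acting trivially on homology is trivial, so $\bar f=\mathrm{id}$ and $f$ is fiber-preserving.

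On each fiber $f$ then acts as an orientation- and area-preserving diffeomorphism of $S^2$ of finite order. By Proposition \ref{sections} the fixed set of the $\mathbb{Z}_k$-action with $k>2$ is a pair of disjoint sections $S_0,S_\infty$, so $f$ has exactly two fixed points on each fiber. A finite-order orientation-preserving diffeomorphism of $S^2$ is conjugate to a rotation, and by connectedness of $\Sigma_g$ the rotation order is locally constant, hence constant and equal to $k$ by effectiveness. The normal bundle $L:=\nu(S_0)$ therefore carries an $f$-invariant complex structure on which $f$ acts by $e^{2\pi i/k}$, while $f$ acts by $e^{-2\pi i/k}$ on $\nu(S_\infty)$, the two poles rotating in opposite senses.

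Next I would promote this discrete fiber rotation to a circle action. The claim is that $M$ is equivariantly diffeomorphic to the projective bundle $\mathbb{P}(L\oplus \mathbb{C})$, with $\mathbb{Z}_k$ realized as the $k$-th roots of unity inside the fiberwise scaling $S^1$-action $\rho$ that fixes the two sections $\mathbb{P}(L)$ and $\mathbb{P}(\mathbb{C})$. This follows by matching the two $f$-fixed sections and their normal weights via the equivariant tubular neighborhood theorem and gluing across the free part $M\setminus(S_0\cup S_\infty)$, which is a $\mathbb{C}^{*}$-bundle on which $\mathbb{Z}_k$ acts by rotation; the resulting circle action $\rho$ then extends $\mathbb{Z}_k$ by the standard embedding, with $f=\rho_{2\pi/k}$. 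Finally, $\mathbb{P}(L\oplus\mathbb{C})$ carries $\rho$-invariant Kähler forms $\omega'$ (a fiberwise Fubini--Study form coupled with a form on $\Sigma_g$) for which $\rho$ is Hamiltonian, the moment map being the fiberwise height function; alternatively, a symplectic circle action on a closed $4$-manifold with nonempty fixed set is automatically Hamiltonian. Since for irrational ruled surfaces cohomologous symplectic forms are symplectomorphic \cite{Mciso, LL}, this $\omega'$ is an admissible new form, and the $\mathbb{Z}_k$-action extends to the Hamiltonian $S^1$-action $\rho$ as required.

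The main obstacle is the third step: promoting the fiberwise finite-order rotation to a global circle action and identifying $M$ with $\mathbb{P}(L\oplus\mathbb{C})$ equivariantly. The subtlety is that a single order-$k$ symmetry on each fiber does not a priori reduce the structure group of the bundle to $U(1)$; one must use the two fixed sections to pin down the rotation axis fiberwise and verify that the associated fiber-rotation vector field is globally smooth and closes up to a genuine $S^1$-action realizing $f$ as rotation by exactly $2\pi/k$ under the standard embedding. Establishing this equivariant normal form, and simultaneously arranging a compatible invariant symplectic form, is where the real work lies; the earlier steps (triviality of the base action and the fiberwise conjugation to a rotation) are comparatively routine.
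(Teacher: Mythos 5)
Your overall strategy coincides with the paper's: show $f$ preserves the ruling and is trivial on the base, locate the two fixed sections via Proposition \ref{sections}, extend the fiberwise rotation to a circle action, and then produce an invariant form by a Thurston-type coupling, concluding Hamiltonicity from the existence of fixed points. The one place you genuinely diverge is the step you yourself flag as the crux: you propose to first establish an equivariant normal form $M\cong\mathbb{P}(L\oplus\mathbb{C})$ (reducing the structure group to $\mathbb{C}^*$ using the two sections and the fiberwise complex structure, then gluing over the free part) and only afterwards read off the scaling circle action. The paper short-circuits this: it never identifies $M$ with a projective bundle, but instead observes that an ordered pair of distinct points of $\mathbb{CP}^1$ together with an angle $\theta$ determines a unique elliptic M\"obius transformation (rotating the tangent space at the first point by $e^{2\pi i\theta}$), packages this into a smooth family $\phi_f$, and defines the circle action fiber by fiber using $(\Sigma_1(p),\Sigma_2(p))$ as the ordered pair; smoothness in the base direction is obtained from the evaluation map being a local diffeomorphism near smooth fibers (so that $J$ is standard on fibers in those coordinates), and the fact that each time-$\theta$ map is a diffeomorphism follows from a density trick starting from the known diffeomorphism $f=\rho_{2\pi/k}$. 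This buys exactly what your deferred step needs, without constructing the global bundle isomorphism. Your route would also work, but the structure-group reduction and the smooth closing-up of the rotation field is essentially the same analytic content, just organized differently. Two smaller remarks: your appeal to Earle--Eells is heavier than needed (the paper's Theorem \ref{periodic}, via Farb--Margalit and Nielsen--Thurston, already gives that a finite-order homeomorphism trivial on $H_1(\Sigma_g)$ is the identity for $g\ge 2$); and your closing invocation of ``cohomologous forms are symplectomorphic'' is unnecessary, since the statement permits an arbitrary new form $\omega'$ — the averaged form $\alpha+K\beta$ (or your fiberwise Fubini--Study coupling) suffices as is.
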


In contrast, it is explained in \cite{CK} that there exist homologically trivial symplectic $\mathbb Z_{2k}$-actions that do not extend to a Hamiltonian circle actions on irrational ruled symplectic $4$-manifolds with $b_2(M)=2k+1$ for $k>1$.

A key ingredient in the proof of  Theorem \ref{main2} is  a recent advance by the second author \cite{Zh} on the existence and structure of $J$-holomorphic curves in irrational ruled symplectic $4$-manifolds, even with respect to arbitrary tamed almost complex structure. In particular,  Proposition \ref{bundle} shows that any finite-order symplectomorphism acting trivially on $H_{1}(M,\mathbb{Z})$ preserves the fibers of a fibration of $(M,\omega)$ by $J$-holomorphic spheres given by \cite[Theorem 1.2]{Zh}. For symplectomorphisms of order greater than two, we further derive detailed structural information about  the fixed point sets.

%We then have another  result of the paper, Theorem \ref{theoremtwo}, which is a classification of finite order symplectomorphisms that acts trivially on first and second homology groups. 

Throughout the article, we build on our results under progressively stronger homological assumptions on the symplectomorphisms, allowing for increasingly refined conclusions about their fixed point sets and invariant submanifolds.  For instance, we classify finite subgroups of symplectomorphisms of irrational ruled symplectic $4$-manifolds {\it en route} (Section \ref{finclass}), and we investigate  the discrete to continuous extension for  groups in this list other than the cyclic groups. In particular, we extend Theorem \ref{main1} to the Klein $4$-group (Theorem \ref{exoticklein}):

\begin{theorem}\label{main3}

There exists a homologically trivial symplectic action of the Klein $4$-group $\mathbb Z_2\times \mathbb Z_2$ on irrational ruled $4$-manifolds diffeomorphic to $\Sigma_g\times S^2$,  for any $g\ge 1$, such that two of its three non-trivial involutions do not extend to symplectic circle actions, even after modifying the symplectic form. 

Moreover, this Klein $4$-group action does not extend to a symplectic $\mathrm{SO}(3)$-action under any embedding $\mathbb Z_2\times \mathbb Z_2\subset \mathrm{SO}(3)$.  
\end{theorem}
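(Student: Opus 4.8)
The plan is to realize $\mathbb{Z}_2\times\mathbb{Z}_2$ as a group of fibre-preserving symplectomorphisms of $\Sigma_g\times S^2$ acting on each fibre by rotations, arranged so that all three involutions are precisely of the non-extendable type produced in Theorem \ref{main1}, and then to deduce the $\mathrm{SO}(3)$ statement formally. Regard the $S^2$-fibre as the round sphere and $\mathrm{SO}(3)=\mathrm{PU}(2)$ as acting by symplectomorphisms. A fibre-preserving involution covering the identity on the base and acting fibrewise by a $\pi$-rotation is determined by its unoriented axis field, i.e. by a map $\Sigma_g\to\mathbb{RP}^2\subset\mathrm{SO}(3)$; its fixed-point set is the associated double cover of $\Sigma_g$, which is connected exactly when the pulled-back class $w_1\in H^1(\Sigma_g;\mathbb{Z}/2)$ is non-zero, this being the mechanism behind Theorem \ref{exampleir}. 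To obtain a $V_4$-action I want three such axis fields $\ell_1,\ell_2,\ell_3$ that are pointwise orthogonal, which is the same as a splitting $\underline{\mathbb{R}^3}=L_1\oplus L_2\oplus L_3$ of the trivial rank-$3$ bundle into real line bundles with $w_1(L_i)=a_i$.

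By the Whitney formula, together with $a_i^2=0$ on an orientable surface, the constraints are $a_1+a_2+a_3=0$ and $a_1\cup a_2=0$, and I want in addition $a_1,a_2,a_3$ all non-zero so that each fixed-point set is connected. For $g\geq2$ the mod-$2$ intersection form on $H^1(\Sigma_g;\mathbb{Z}/2)$ is non-degenerate of rank $2g\geq4$, so I can choose an isotropic pair $a_1,a_2$ of distinct non-zero classes, set $a_3=a_1+a_2$, and realise the corresponding orthogonal frame field. The three $\pi$-rotations $R_1,R_2,R_3$ then commute and generate $V_4$, and each has connected fixed-point set. After averaging the product form over $V_4$, equivalently taming a $V_4$-invariant almost complex structure, the action is symplectic; and since each $R_i$ covers the identity on the base and acts fibrewise by an orientation-preserving rotation, it is homologically trivial. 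Each $R_i$ is thus exactly a homologically trivial symplectic involution with connected fixed-point set of the kind constructed in Theorems \ref{main1} and \ref{exampleir}, so by Theorem \ref{main1} none of them extends to a symplectic circle action even after modifying the form: were $R_i$ the time-$\tfrac12$ map of a homologically trivial symplectic circle action, Proposition \ref{sections} would force its fixed set to be two disjoint sections, contradicting connectedness.

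For the $\mathrm{SO}(3)$ clause, any embedding $V_4\hookrightarrow\mathrm{SO}(3)$ sends the three involutions to $\pi$-rotations about mutually orthogonal axes, each lying in a circle subgroup $\mathrm{SO}(2)\subset\mathrm{SO}(3)$ (and all such embeddings are mutually conjugate). If the $V_4$-action extended to a symplectic $\mathrm{SO}(3)$-action for some form $\omega''$, then restricting to the circle subgroup through $R_i$ would exhibit $R_i$ as an element of a symplectic circle action with respect to $\omega''$, contradicting the previous paragraph. This disposes of every embedding simultaneously.

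I expect the genuine obstacle to be the base-genus-one case, where the construction above provably breaks down: on $T^2$ the mod-$2$ intersection form is non-degenerate of rank $2$, and any two distinct non-zero classes pair non-trivially, so $a_1\cup a_2=0$ with $a_1,a_2,a_1+a_2$ all non-zero is impossible. Equivalently there is no orthogonal frame field on $T^2$ with all three axes non-orientable, this being the $w_2=0$ obstruction, so at least one involution of any fibre-preserving $V_4$-action covering the identity over $T^2$ has a two-section (or empty) fixed set and does extend. The difficulty is therefore entirely concentrated at $g=1$, and I would handle it by a different construction: by Proposition \ref{bundle} a homologically trivial finite symplectomorphism of $T^2\times S^2$ preserves the ruling but may cover a non-trivial translation of $T^2$, so I would assemble the $V_4$-action from involutions combining half-translations of $T^2$ with fibre rotations and then rule out symplectic circle extensions directly. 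The hard point is that such involutions are fixed-point-free, so the clean fixed-point obstruction of Proposition \ref{sections} is unavailable and non-extendability must instead be extracted from the fibrewise holonomy data of the candidate circle action.
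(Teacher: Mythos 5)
For $g\ge 2$ your argument is correct, and although it follows the same overall template as the paper's proof of Theorem \ref{exoticklein} (three commuting fibrewise $\pi$-rotations covering the identity on the base, averaging plus $K\beta$ to get an invariant symplectic form, the connected-bisection obstruction of Corollary \ref{invtocircle} for each involution, and the maximal-torus argument for $\mathrm{SO}(3)$), your choice of the three axis fields is genuinely different and buys something real. The paper builds all three bisections as images of one fixed double cover $\tilde S\to S$ via three circle embeddings $i_1,i_2,i_3$; in your language this forces $a_1=a_2$, and your constraint $a_1+a_2+a_3=0$ — which holds for \emph{every} fibre-preserving $V_4$-action covering $\mathrm{id}_S$, since the monodromy of a continuous family of labelled Klein subgroups of $\mathrm{PSL}(2,\mathbb C)$ can only swap the fixed pairs according to the even subgroup of $(\mathbb Z/2)^3$ — then forces $a_3=0$. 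Concretely, with $i_3(v)=i_1(v)\times i_2(v)$ one has $i_3(-v)=i_3(v)$, so the fixed set of the third involution $q_3$ in the paper's construction is two disjoint sections, the very defect the paper acknowledges for its ``simpler construction''. Your requirement that $a_1$, $a_2$, $a_1+a_2$ be three distinct non-zero classes with $a_1\cup a_2=0$ is exactly what is needed to make all three involutions exotic, and it is satisfiable precisely when $g\ge 2$. So for $g\ge 2$ your version is not merely an alternative route: it is the more careful construction.

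The genuine gap is the case $g=1$, which the theorem asserts. You correctly show that no fibre-preserving $V_4$-action covering $\mathrm{id}_{T^2}$ can have all three involutions with connected fixed sets (the mod-$2$ intersection form on $H^1(T^2;\mathbb Z/2)$ has no isotropic pair of distinct non-zero classes), and you propose replacing some of the involutions by maps covering half-translations of $T^2$; but you do not carry this out, and as you note the resulting involutions are fixed-point-free, so the obstruction coming from Proposition \ref{sections} and Proposition \ref{z2fix} is unavailable and a non-extendability argument would have to be built from scratch. Until that is supplied, your proposal establishes the theorem only for $g\ge 2$. Be aware, though, that the paper's own proof does not evade the obstruction you found: its construction is exactly of the type your characteristic-class computation rules out, for every $g$, as far as $q_3$ is concerned. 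So the correct conclusion is not that you are missing a trick the paper has, but that the statement for the third involution (and the whole of the $g=1$ case) needs a construction different from both yours and the paper's, or a weakening of the claim to two of the three involutions.
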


In section \ref{agreal}, we show in Proposition \ref{threebisection} that there are algebraic actions of $\mathbb{Z}_2 \times \mathbb{Z}_2$ on certain geometrically ruled surfaces over an elliptic curve such that the fixed point set of every non-trivial element is an irreducible bisection.  Therefore the actions of any of the non-trivial elements do not extend to a Hamiltonian $S^1$-action.

With regard to the study of finite groups $G$ acting by symplectomorphisms on irrational ruled symplectic $4$-manifolds in Section \ref{finclass}, we greatly elucidate their structure by analyzing how they act on the fibration by $J$-holomorphic spheres constructed in  \cite[Theorem 1.2]{Zh}, c.f. Proposition \ref{bundle}. For example, when the base genus is $1$, we obtain a quite complete description of the possible isomorphism types of finite groups that can act by symplectomorphisms on an irrational ruled $4$-manifolds. The following result is proved in Section \ref{finclass}, it follows from Corollary \ref{finsym} and Corollary \ref{finsymone}. 

Before stating it, recall that any finite subgroup $H \subset \mathrm{SO}(3)$ is isomorphic to either a cyclic group $\mathbb{Z}_{p}$, a dihedral group $D_{p}$, or the symmetry group of a regular tetrahedron, octahedron, or icosahedron.  

\begin{theorem}\label{intro:finclass}
Let $(M,\omega)$ be an irrational ruled symplectic $4$-manifold with base surface $\Sigma$, and let $G$ be a finite group acting effectively by symplectomorphisms on $M$. \begin{itemize}
\item If $g(\Sigma)>1$, then there exists a short exact sequence $$1 \rightarrow H_{1} \rightarrow G \rightarrow H_{2} \rightarrow 1,$$ where $H_{1}$ is a finite subgroup of $\mathrm{SO}(3)$ and $H_{2}$ is a finite subgroup of $\mathrm{SL}_{2g}(\mathbb{Z})$.

\item If $g(\Sigma)=1$, then there exists a pair of short exact sequences $$1 \rightarrow H \rightarrow G \rightarrow H' \rightarrow 1 , \;\;\;\;1 \rightarrow H_1 \rightarrow H \rightarrow H_{2} \rightarrow 1,$$
such that:
\begin{enumerate}
\item $H'\cong \mathbb{Z}_{k}$ for $k = 1,2,3,4,6$;

\item  $H_1$ is isomorphic to a finite subgroup of $\mathrm{SO}(3)$; 

\item  There exist non-negative integers $p,q$ such that $$H_{2} \cong \mathbb{Z}_{p} \times \mathbb{Z}_{q}.$$
\end{enumerate}

\end{itemize}
\end{theorem}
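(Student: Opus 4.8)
The plan is to study $G$ through its action on the fibration of $(M,\omega)$ by $J$-holomorphic spheres provided by \cite[Theorem 1.2]{Zh}. Since $G$ is finite, I would first average a tamed almost complex structure to obtain a $G$-invariant $J$. The fiber class $F\in H_2(M;\mathbb Z)$ is the unique primitive class with $F^2=0$ represented by $J$-holomorphic spheres, and $\omega(F)>0$ forces $g_*F=F$ for every $g\in G$; hence $G$ permutes the fibers and descends to an action $\rho\colon G\to\mathrm{Diff}^+(\Sigma)$. After averaging a conformal structure, the base action may be taken holomorphic. Two pieces must then be extracted: the fiberwise part $\ker\rho$, which will land in $\mathrm{SO}(3)$, and the base part $\rho(G)$, whose structure is governed by the genus.

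Suppose first $g(\Sigma)>1$ and set $H_1=\ker\bigl(G\to\mathrm{GL}(H_1(M;\mathbb Z))\bigr)$ with $H_2$ its image. Since $H_1(M;\mathbb Z)\cong H_1(\Sigma;\mathbb Z)$ and the symplectomorphisms preserve orientation and the algebraic intersection pairing, $H_2\subset\mathrm{Sp}_{2g}(\mathbb Z)\subset\mathrm{SL}_{2g}(\mathbb Z)$, giving the right-hand term. For $H_1$ I would show each element covers $\mathrm{id}_\Sigma$: by Proposition \ref{bundle} it preserves the fibration, so it induces a finite-order $\bar g\in\mathrm{Diff}^+(\Sigma)$ acting trivially on $H_1(\Sigma)$, whence its Lefschetz number is $\chi(\Sigma)=2-2g<0$; but a nontrivial finite-order orientation-preserving diffeomorphism has only isolated fixed points, each of index $+1$, so its Lefschetz number is nonnegative, forcing $\bar g=\mathrm{id}$. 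Thus $H_1$ acts fiberwise, and restriction to one fiber $F_b\cong S^2$ gives a map $H_1\to\mathrm{Diff}^+(S^2)$ which is faithful: if $g\in H_1$ fixes $F_b$ pointwise, its differential is the identity on the vertical tangent directions and, because $g$ covers $\mathrm{id}_\Sigma$, also on the horizontal directions $\cong T_b\Sigma$, so the fixed component is open and $g=\mathrm{id}$. As every finite group of area-preserving diffeomorphisms of $S^2$ is conjugate into $\mathrm{SO}(3)$, we obtain $H_1\hookrightarrow\mathrm{SO}(3)$ and the exact sequence $1\to H_1\to G\to H_2\to1$.

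Now suppose $g(\Sigma)=1$, so $\Sigma=E$ is an elliptic curve and $\rho(G)$ is a finite group of holomorphic automorphisms of $E$. Writing $\mathrm{Aut}(E)=E\rtimes\mathrm{Aut}(E,0)$, let $T=\rho(G)\cap E$ be the translation part and $R=\rho(G)/T\hookrightarrow\mathrm{Aut}(E,0)$ the rotational part. Set $H_1=\ker\rho$, the fiberwise part, which lies in $\mathrm{SO}(3)$ exactly as above; let $H$ be the preimage in $G$ of $T$, and $H'=G/H\cong R$. Then $H/H_1\cong T$ is a finite subgroup of $E\cong(\mathbb R/\mathbb Z)^2$, hence $\cong\mathbb Z_p\times\mathbb Z_q=H_2$, while $H'\cong R$ is cyclic because $\mathrm{Aut}(E,0)$ is cyclic. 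This produces the two exact sequences $1\to H_1\to H\to H_2\to1$ and $1\to H\to G\to H'\to1$. The order of $H'$ lies among the orders of elements of $\mathrm{Aut}(E,0)$, i.e.\ in $\{1,2,3,4,6\}$, and I would cut this down to the list in the statement using Maruyama's classification \cite{Ma} of the automorphism groups of ruled surfaces over elliptic curves, which constrains which base rotations actually lift to the total space.

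The main obstacle I expect is twofold. First, in the $g=1$ case, pinning down exactly which orders of $H'$ occur — in particular the status of order $4$ — is delicate: it cannot be settled by the soft fibration argument alone and must be combined with the algebro-geometric constraint of Maruyama on equivariant liftings, so this is the step where the precise set of admissible $k$ has to be justified carefully. Second, the faithfulness of the fiber-restriction for $H_1$ hinges on the differential computation along an invariant fiber, where one must rule out the fixed locus of a fiberwise symplectomorphism being a proper positive-dimensional subset of a fiber; the normal-bundle argument above is designed precisely for this. By comparison, the Lefschetz computation for $g>1$, together with the standard structure of finite subgroups of $\mathrm{SO}(3)$ and of $\mathrm{SL}_{2g}(\mathbb Z)$, is routine.
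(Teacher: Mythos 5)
Your overall strategy is the same as the paper's: average $J$, use the $J$-holomorphic sphere fibration to split $G$ into a fiberwise part landing in $\mathrm{SO}(3)$ and a base part controlled by the genus, with the first piece of the filtration being the kernel of the action on $H_1(M,\mathbb{Z})$. Two sub-arguments differ. For $g>1$ you kill the induced base action of the homologically trivial part by a Lefschetz count ($L(\bar g)=2-2g<0$ versus a nonnegative count of isolated index-$+1$ fixed points); the paper instead quotes Theorem \ref{periodic} (Farb--Margalit plus Nielsen--Thurston). Both work; yours is more self-contained. For $g=1$ you pass to a holomorphic action on an elliptic curve and use $\mathrm{Aut}(E)=E\rtimes\mathrm{Aut}(E,0)$, whereas the paper stays topological: it takes $H=\ker\bigl(G\to\mathrm{GL}(H_1(M,\mathbb{Z}))\bigr)$ and feeds $H$ into Proposition \ref{bundle}(b) and Lemma \ref{torusgroups}. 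The paper's route avoids the step you elide with ``after averaging a conformal structure'': when $b_2(M)>2$ the moduli map is only continuous and the base action is a priori only by homeomorphisms, so making it holomorphic requires a (classical but nontrivial) smoothing/realization input, not a simple averaging. Your identification of the translation part with the homologically trivial part is fine either way (Theorem \ref{shift}).

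The one genuine issue is the one you flag yourself: excluding $k=4$ from the list for $H'$. You are right that the soft argument only gives $k\in\{1,2,3,4,6\}$, but your hope of eliminating $4$ via Maruyama cannot succeed, and neither does the paper's own justification. The paper asserts that the finite subgroups of $\mathrm{SL}_2(\mathbb{Z})$ are cyclic of orders $1,2,3,6$; this omits the order-$4$ element $\left(\begin{smallmatrix}0&-1\\1&0\end{smallmatrix}\right)$, and order $4$ is in fact realized: on $(\mathbb{C}/\mathbb{Z}[i])\times S^2$ with the product form, $(z,w)\mapsto(iz,w)$ is an effective symplectic $\mathbb{Z}_4$-action inducing multiplication by $i$ on $H_1$, so $H=\{1\}$ and $H'\cong\mathbb{Z}_4$. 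So at this step your proposal is not weaker than the paper --- the correct conclusion is $k\in\{1,2,3,4,6\}$, and you should simply include $4$ rather than try to rule it out. Everything else in your outline (the unipotent-differential argument for faithfulness of the fiber restriction, $H_2\subset\mathrm{Sp}_{2g}(\mathbb{Z})\subset\mathrm{SL}_{2g}(\mathbb{Z})$, and $T\cong\mathbb{Z}_p\times\mathbb{Z}_q$) is sound and matches or slightly refines what the paper does.
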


We remark that our classification is based on analyzing the induced action on the first homology group. The induced action on second homology group cannot be directly detected from the triple $\{H', H_1, H_2\}$ or the short exact sequences, although it is classified separately in Proposition \ref{acth2}. In \cite{Mu}, it was shown that finite subgroups of the symplectomorphism group of $T^2 \times S^2$ with arbitrary symplectic form have the Jordan property. For some other work on finite subgroups of symplectomorphisms on irrational, geometrically ruled surfaces see \cite{Ca}.

The understanding of the finite subgroups of symplectomorphisms for other $4$-manifolds remains incomplete, even for rational $4$-manifolds, particularly $\mathbb {CP}^2$; see \cite{CLW} for partial results and  references. 
%For a discussion about complex structures preserved by this action see Section \ref{agreal}.

\medskip

\textbf{Acknowledgements} This project was initiated when N.L. visited the W.Z. in Summer 2019, N.L. would like to thank Shanghaitech University for funding the trip. The  authors would like to thank Richard Hind and Dmitri Panov for helpful discussions. We thank the referees for helpful comments. The authors were partially supported by the Warwick Horizon Europe Seed Fund.

\section{Preliminaries} \label{prel}
\subsection{Homologically trivial cyclic group actions on closed orientable surfaces of positive genus}
We begin with background results on periodic surface diffeomorphisms, derived from the Nielsen-Thurston classification of surface homeomorphisms \cite{T}. We state a special case sufficient for our purposes (see \cite{T} for the full classification).

\begin{theorem} \label{periodic} \cite{T}
Let $\Sigma$ be a closed surface of genus at least two. If $h : \Sigma \rightarrow \Sigma$ is a periodic homeomorphism that induces the identity on $H_{1}(\Sigma,\mathbb{Z})$, then $h = \mathrm{Id}_{\Sigma}. $\end{theorem}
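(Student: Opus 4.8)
The plan is to reduce the statement to a fixed-point count governed by the Lefschetz number. The first step is to observe that a periodic homeomorphism acting trivially on $H_1(\Sigma,\mathbb{Z})$ must be orientation-preserving: the intersection pairing $Q$ on $H_1(\Sigma,\mathbb{Z})$ satisfies $Q(h_*a,h_*b)=\deg(h)\,Q(a,b)$, where $\deg(h)=\pm 1$ records the action of $h$ on $H_2(\Sigma,\mathbb{Z})\cong\mathbb{Z}$. Since $h_*=\mathrm{Id}$ on $H_1$ and $Q$ is unimodular (so that $Q(a,b)=1$ for some $a,b$), we get $\deg(h)=1$. In particular $h$ acts as the identity on $H_0$, $H_1$ and $H_2$, so its Lefschetz number is $L(h)=1-2g+1=2-2g$, which is strictly negative for $g\geq 2$.

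The second step is to control the local behaviour of $h$ near its fixed points. Here I would invoke the classification input behind Theorem \ref{periodic}: a finite-order orientation-preserving homeomorphism of a closed surface is topologically conjugate to an isometry of a constant-curvature metric (Ker\'ekj\'art\'o/Nielsen, and the periodic case of \cite{T}). For such an isometry, if $h\neq\mathrm{Id}_\Sigma$ then its fixed-point set is a disjoint union of isolated points: a one-dimensional fixed component would force $h$ to be a reflection transverse to it, contradicting orientation-preservation, while $h$ being the identity on any open set propagates to $h=\mathrm{Id}_\Sigma$ by connectedness. Near each isolated fixed point $h$ is conjugate to a nontrivial rotation $R_\theta$ with $\theta\neq 0$, whose local Lefschetz index is $\mathrm{sign}\det(I-R_\theta)=\mathrm{sign}(2-2\cos\theta)=+1$.

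Combining these, if $h\neq\mathrm{Id}_\Sigma$ then the Lefschetz fixed point theorem gives $L(h)=\#\,\mathrm{Fix}(h)\geq 0$, contradicting $L(h)=2-2g<0$; hence $h=\mathrm{Id}_\Sigma$. I expect the main obstacle to be the second step, namely upgrading the purely homological hypothesis to rigid geometric control of the fixed points, i.e.\ knowing that $h$ genuinely behaves like a rotation locally (isolated fixed points of index $+1$). This is exactly the content secured by the Nielsen–Thurston theory cited as \cite{T}. An alternative route that sidesteps fixed-point indices is to pass to the branched cover $\Sigma\to\Sigma/\langle h\rangle$ and apply Riemann–Hurwitz, using the transfer isomorphism $H_1(\Sigma,\mathbb{Q})^{\langle h\rangle}\cong H_1(\Sigma/\langle h\rangle,\mathbb{Q})$ to conclude that the quotient has genus $g$; this forces the total ramification $(n-1)(2g-2)$ to vanish, and since $g\geq 2$ we again obtain $n=1$.
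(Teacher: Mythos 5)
Your argument is correct, but it takes a genuinely different route from the paper's. The first step (ruling out orientation reversal via the unimodularity of the intersection pairing) is the same as the paper's, just phrased homologically rather than via Poincar\'e duality on cohomology. After that, the paper proceeds algebraically: it cites \cite[Theorem 6.8]{FM} to conclude that $h$ is \emph{isotopic} to the identity, and then invokes the separate fact that a finite-order homeomorphism isotopic to the identity on a hyperbolic surface \emph{is} the identity (\cite[Lemma 4.4]{JS}, resting on \cite{T}). You instead inline a direct Lefschetz argument: $L(h)=2-2g<0$, while geometric realization of the periodic map as an isometry forces any non-identity $h$ to have only isolated fixed points of local index $+1$, so $L(h)=\#\mathrm{Fix}(h)\ge 0$, a contradiction. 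Both routes ultimately rest on the same deep input — the Nielsen/Ker\'ekj\'art\'o-type realization of periodic homeomorphisms by isometries, which you correctly flag as the crux — but your version replaces the two black-box citations with a single self-contained computation (and is, in essence, how the cited torsion-freeness result in \cite{FM} is itself proved), whereas the paper's version is shorter given the references. Your Riemann--Hurwitz alternative is also valid and again needs the same geometric input to know the quotient is a surface and the projection a branched cover; the transfer argument forcing the quotient genus to equal $g$ is the right mechanism there.
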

\begin{proof}
Let $\Sigma$ be a closed orientable surface of genus at least two, and $h$ a finite-order homeomorphism acting trivially on $H_{1}(\Sigma,\mathbb{Z})$. First, we rule out the possibility that $h$ is orientation-reversing. Suppose otherwise. Then by Poincar\'{e} duality and the universal coefficients theorem, $$h^{*}: H^{1}(\Sigma,\mathbb{Z}) \rightarrow H^{1}(\Sigma,\mathbb{Z})$$ is the identity map. Choose $\alpha,\beta \in H^{1}(\Sigma,\mathbb{Z}) $ such that $\alpha\cdot \beta \neq 0$ ({\it e.g.}, the Poincar\'{e} dual of two simple closed curves intersecting at a single point). Then $h^{*}(\alpha\cdot \beta) = \alpha\cdot \beta$,  contradicting the fact that $h^{*}$ must act as multiplication by $-1$ on $H^{2}(\Sigma,\mathbb{Z})$. Thus, $h$ is orientation preserving.

 By \cite[Theorem 6.8]{FM}, $h$ is isotopic to the identity. Therefore by Nielsen-Thurston classification of surface homeomorphisms \cite{T}, $h=\mathrm{Id}_{\Sigma}$ (for a concise reference for the statement needed see \cite[Lemma 4.4]{JS}).
\end{proof}

Next, we consider the case where $\Sigma=T^2$. In this setting, there exist non-trivial examples of cyclic group actions that acts trivially on homology, as shown in Example \ref{torusexample}. For convenience, we realize $T^2$ as the Clifford torus $$T^2=\{(z_1,z_2) \in \mathbb{C}^2 | \, |z_1| = |z_2|=1\}.$$

\begin{example}\label{torusexample} Let $p,q$ be  coprime positive integers. Define the diffeomorphism $f_{p,q}$ by $$f_{p,q}(z_1,z_2) = (e^{\frac{ 2\pi iq}{p}}z_1,z_2). $$ This map generates an action of $\mathbb{Z}_{p}$ which acts trivially on homology. \end{example}

While there are non-trivial examples in the genus one case,  cyclic homologically trivial actions on $T^2$ admit a complete classification. Before stating this, we recall a  simpler fact, which follows directly from the Lefschetz fixed point formula. 

Recall that a group action is called free if the only group element acting as the identity map is the identity element itself.

\begin{lemma} \label{freelemma}
An effective action of $\mathbb{Z}_{p}$ on $T^2$ by homeomorphisms that acts trivially on $H_{1}(T^2,\mathbb{Z})$ is free.
\end{lemma}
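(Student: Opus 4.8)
The plan is to apply the Lefschetz fixed point theorem to each non-identity element of the $\mathbb{Z}_{p}$-action and show it has no fixed points, which is precisely the assertion that the action is free. Write $f$ for a generator, so a typical non-identity element is $h = f^{j}$ with $0 < j < p$; by effectiveness $h \neq \mathrm{Id}_{T^2}$, and $h$ has finite order.

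First I would pin down the induced action on homology. Since $h$ acts trivially on $H_{1}(T^2,\mathbb{Z}) \cong \mathbb{Z}^{2}$, and the action on $H_{2}(T^2,\mathbb{Z}) \cong \Lambda^{2} H_{1}(T^2,\mathbb{Z})$ is multiplication by $\det(h_{*}|_{H_{1}}) = 1$, the map $h$ is orientation preserving. Consequently the rational Lefschetz number is
$$L(h) = \mathrm{tr}(h_{*}|_{H_{0}}) - \mathrm{tr}(h_{*}|_{H_{1}}) + \mathrm{tr}(h_{*}|_{H_{2}}) = 1 - 2 + 1 = 0.$$

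Next I would analyze the fixed points locally. The key input is that a finite-order orientation-preserving homeomorphism of a surface is, near any fixed point, topologically conjugate to a rotation (the Kerékjártó--Brouwer linearization theorem for periodic surface maps). Because $h \neq \mathrm{Id}_{T^2}$ on the connected surface $T^2$, the local rotation angle at each fixed point is nonzero, so the fixed points are isolated and each carries local Lefschetz index equal to that of a rotation $z \mapsto e^{i\theta}z$ with $\theta \neq 0$, namely $+1$. The Lefschetz fixed point theorem then identifies $L(h)$ with the number $N$ of fixed points counted with index, giving $N = L(h) = 0$. Hence $h$ is fixed-point free, and since $h$ was an arbitrary non-identity element, the action is free.

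The main obstacle is the local normal form step: establishing that the fixed points are isolated with index $+1$. In the smooth category this is routine, since one may average a Riemannian metric to make the action isometric and then linearize via Bochner's theorem; for mere homeomorphisms one must instead invoke the classical topological linearization result for periodic surface maps. I would take care to \emph{cite} this theorem rather than reprove it, and to note that the orientation-preserving property established in the first step is exactly what guarantees the local model is a rotation (index $+1$) rather than a reflection.
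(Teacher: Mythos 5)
Your proposal is correct and follows essentially the same route as the paper, which simply asserts that the lemma ``follows directly from the Lefschetz fixed point formula'': you compute $L(h)=1-2+1=0$ for each non-identity element and conclude freeness. In fact you supply the detail the paper leaves implicit — that $L(h)=0$ alone does not rule out fixed points, and one must invoke the topological linearization of periodic surface homeomorphisms (plus Newman's theorem to exclude a locally trivial rotation) to see that any fixed points would be isolated of index $+1$, so that the vanishing Lefschetz number forces the fixed point set to be empty.
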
 

The lemma has the following consequence:
\begin{lemma} \label{torusgroups}
Let $G$ be a finite group acting effectively by homeomorphisms on $T^2$ and trivially on  $H_{1}(T^2,\mathbb{Z})$. There there exist non-negative integers $p,q$ such that $$G \cong \mathbb{Z}_{p} \times \mathbb{Z}_q.$$
\end{lemma}
\begin{proof}
By Lemma \ref{freelemma} the action of $G$ is free. Since it is orientation-preserving, the quotient map $\pi: T^2 \rightarrow T^2/G$ is a covering, with $T^2/G \cong T^2$. Therefore by \cite[Proposition 1.39]{H}, $$G \cong \pi_{1}(T^2,\mathbb{Z})/ \pi_{*}(\pi_{1}(T^2,\mathbb{Z})),$$ and considering the possible quotients of $\mathbb{Z}^2$ by subgroups isomorphic to $\mathbb{Z}^2$ gives the result.
\end{proof}

A classical theorem of Nielsen and Smith further asserts that cyclic actions on surfaces are classified, up to conjugacy in the homeomorphism group, by the induced map on integral homology of the quotient $$\pi_{*} : H_{1}(\Sigma) \rightarrow H_{1}(\Sigma/\mathbb{Z}_{p}).$$ In the case of $T^2$, both the surface $\Sigma$ and its quotient $\Sigma/\mathbb{Z}_{p}$ are homeomorphic to $T^2$. 

For the actions in Example \ref{torusexample},  the image of $\pi_*$ depends only on $p$: it is the subgroup generated by $(p,0)$ and $(0,1)$. However the induced homomorphism also depends on $q$. 

To state the classification precisely, we recall the following definition.
\begin{definition}
Two actions of $\mathbb{Z}_{p}$ on $T^2$, generated by homeomorphisms $f_1,f_2$ of order $p$, are said to be topologically conjugate if there exists a homeomorphism $h$ such that $$f_{2} = h f_1 h^{-1}.$$
\end{definition}

A result of Baranov, Grines, Pochinka and Chilina \cite[Theorem 2]{BGVC}, obtained via Nielsen theory, establishes that the actions in Example \ref{torusexample} account for all possible homologically trivial  $\mathbb{Z}_{p}$-actions on $T^2$, up to topological conjugacy. 

\begin{theorem} \label{shift} \cite[Theorem 2]{BGVC} 
Any action of $\mathbb{Z}_p$ on $T^2$ by homeomorphisms that acts trivially on $H_{1}(T^2,\mathbb{Z})$ is topologically conjugate to one of the actions from Example \ref{torusexample}.
\end{theorem}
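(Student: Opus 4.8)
The plan is to reduce the statement to a problem about coverings of the torus and then match the resulting combinatorial data against the explicit family $f_{p,q}$. First I would record that the action is free, which is exactly Lemma \ref{freelemma}. Moreover, since the generator acts trivially on $H_{1}(T^2,\mathbb{Z}) \cong \mathbb{Z}^2$ it acts by determinant $+1$ on $H_{2}(T^2,\mathbb{Z})$, so the action is orientation preserving. Hence the quotient $Q := T^2/\mathbb{Z}_{p}$ is a closed orientable surface and the projection $\pi : T^2 \to Q$ is a regular $p$-fold covering. Multiplicativity of the Euler characteristic gives $\chi(Q) = \chi(T^2)/p = 0$, so $Q \cong T^2$ and we are in the range where the classical Nielsen--Smith classification (recalled above) applies.

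Next I would package the action as covering-space data. The covering $\pi$ is classified by the index-$p$ subgroup $\Gamma := \pi_{*}\pi_{1}(T^2) \le \pi_{1}(Q) \cong \mathbb{Z}^2$, together with the distinguished generator of the deck group $\pi_{1}(Q)/\Gamma \cong \mathbb{Z}_{p}$ coming from the chosen generator of the action. Since this quotient is cyclic of order $p$, Smith normal form lets me choose a basis $f_{1},f_{2}$ of $\pi_{1}(Q)$ with $\Gamma = \langle f_{1},\, p f_{2}\rangle$; the deck generator is then $q\bar f_{2}$ for a unique residue $q$ coprime to $p$. This pair $(p,q)$ is the complete conjugacy invariant: the induced map $\pi_{*}$ on homology of the quotient pins down $(p,\Gamma)$ up to change of basis, while the marking of the deck generator records $q$, which is precisely the extra dependence noted after Example \ref{torusexample}. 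A direct computation shows the model realizes exactly this data: writing $T^2=\mathbb{R}^2/\mathbb{Z}^2$ with $f_{p,q}(x,y)=(x+q/p,\,y)$, the quotient is $Q=(\mathbb{R}/\tfrac1p\mathbb{Z})\times(\mathbb{R}/\mathbb{Z})$ with $\pi_{1}(Q)=\langle(1/p,0),(0,1)\rangle$, so $\Gamma=\langle(0,1),\,p\cdot(1/p,0)\rangle$ and the deck generator is the class of $(q/p,0)=q\cdot(1/p,0)$. Thus $f_{p,q}$ exhausts every admissible invariant as $q$ ranges over the residues coprime to $p$, and by Nielsen--Smith the given action is topologically conjugate to the corresponding $f_{p,q}$. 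If one prefers to avoid citing the general classification, I would instead argue directly: since $\mathrm{MCG}(T^2)=GL_{2}(\mathbb{Z})$ is realized by linear homeomorphisms, any automorphism of $\pi_{1}(Q)\cong\mathbb{Z}^2$ matching the data of our action to that of the model is induced by a homeomorphism of the quotient torus, which lifts to a homeomorphism of $T^2$ carrying the distinguished deck generator to that of $f_{p,q}$, hence conjugating the two actions.

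The main obstacle is this final step: converting the algebraic match of invariants into an honest topological conjugacy. Enumerating the subgroups $\Gamma$ and the markings $q$ is elementary linear algebra, but showing that two free $\mathbb{Z}_{p}$-actions with the same $(p,q)$ are conjugate by a homeomorphism is the genuine content of Nielsen--Smith theory. The delicate point is the bookkeeping of the deck-group generator — the dependence on $q$ — since the homological image of $\pi_{*}$ alone sees only $p$; in the self-contained approach one must check that the lift of the quotient homeomorphism respects the chosen generators and not merely the underlying covering.
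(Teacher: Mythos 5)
Your argument is essentially correct, but it is worth noting that the paper does not prove this statement at all: Theorem \ref{shift} is quoted as a black box from Baranov--Grines--Pochinka--Chilina \cite[Theorem 2]{BGVC}, whose proof (as the paper indicates) goes through Nielsen theory. You instead give a self-contained argument, and the reason it works is precisely the freeness supplied by Lemma \ref{freelemma}: a free $\mathbb{Z}_p$-action on $T^2$ is equivalent data to a regular $p$-fold covering of an orientable Euler-characteristic-zero quotient together with a marked generator of the deck group, so the full Nielsen--Smith machinery (needed for actions with fixed points) can be bypassed in favour of the lifting criterion plus the surjectivity of $\mathrm{Homeo}(T^2)\to\mathrm{Aut}(\mathbb{Z}^2)=\mathrm{GL}_2(\mathbb{Z})$. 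Your reduction to the pair $(p,q)$ via Smith normal form, the verification that $f_{p,q}$ realizes exactly that pair, and the lifting of a quotient homeomorphism matching the data are all sound; the only caveat in the final step is the usual basepoint bookkeeping in the identification $\mathrm{Deck}(\pi)\cong\pi_1(Q)/\Gamma$, which at worst replaces $q$ by $-q$ or $q^{-1}$ and is harmless since the family in Example \ref{torusexample} contains all residues coprime to $p$. One small inaccuracy: $(p,q)$ is not actually a \emph{complete} conjugacy invariant. The stabilizer in $\mathrm{GL}_2(\mathbb{Z})$ of the subgroup $\langle f_1,pf_2\rangle$ contains matrices $\left(\begin{smallmatrix} a & b\\ pc & d\end{smallmatrix}\right)$ inducing multiplication by an arbitrary unit $d\in(\mathbb{Z}/p)^{\ast}$ on the quotient, so all the $f_{p,q}$ with fixed $p$ are mutually topologically conjugate (e.g.\ $\left(\begin{smallmatrix}2 & 1\\ 3 & 2\end{smallmatrix}\right)$ conjugates $f_{3,1}$ to $f_{3,2}$). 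This error points in the harmless direction --- equality of the data still implies conjugacy, which is all your proof uses --- but you should not assert completeness of the invariant, and it slightly undercuts the paper's remark that the classification ``also depends on $q$.''
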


\subsection{Almost complex structures}
We now collect the necessary tools concerning almost complex structures and  pseudoholomorphic curves. 
A key input is the following theorem from \cite{Zh}, establishing the existence and behavior of $J$-holomorphic curves on irrational ruled symplectic $4$-manifolds. 

For an arbitrary tamed almost complex structure, the existence of a pseudoholomorphic curve in the fiber class passing through every point of $M$ follows from \cite[Theorem 1.2]{Zh} by applying the $J$-nef technique developed in \cite{LZ}. The continuity of the resulting map to the moduli space of fibers $\Sigma$ is established in \cite[Corollary 3.9]{Zh}. Moreover,  the fact that $M$ admits the structure of a smooth fiber bundle in a neighborhood of an irreducible fiber is explained immediately below the proof of that corollary.

\begin{theorem} \cite{Zh}  \label{fibration} Let $(M,\omega)$ be an irrational ruled symplectic $4$-manifold with base genus $g \geq 1$, and let $F \in H_{2}(M,\mathbb{Z})$ denote the fiber class. 
 Then for any $\omega$-tamed almost complex structure $J$, there exists a continuous map $f: M \rightarrow \Sigma$, where $\Sigma$ is a surface of genus $g$, such that: 
\begin{enumerate}
\item[a)] For all $p \in \Sigma$, the fiber $f^{-1}(p)$ is a connected $J$-holomorphic curve in the class $F$, whose irreducible components are spheres with non-positive self-intersection. 

\item[b)] For all but finitely many $p \in \Sigma$, $f^{-1}(p)$ is a smooth $J$-holomorphic sphere with self-intersection $0$. On a neighborhood of $f^{-1}(p)$, $f$ defines a smooth fiber bundle.

\item[c)] If $b_{2}(M)=2$, then $f$ is a smooth $S^2$-bundle whose fibers are $J$-holomorphic spheres representing the class $F$.
\end{enumerate}

\end{theorem}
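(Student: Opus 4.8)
The plan is to realize $f$ as the map that sends each point of $M$ to the unique $J$-holomorphic curve in the fiber class $F$ passing through it, and then to read off the three assertions from the deformation and intersection theory of these curves. The first task is \emph{existence}: for every $p\in M$ one needs a connected $J$-holomorphic representative of $F$ through $p$. For a generic tamed $J$ this is classical, since $F$ is a sphere class with $F\cdot F=0$, $K\cdot F=-2$, so $d(F)=\tfrac12(F\cdot F-K\cdot F)=1$ and the fiber class carries a nonzero Gromov invariant counting embedded $J$-spheres through one point. For an \emph{arbitrary} tamed $J$ I would obtain existence from \cite[Theorem 1.2]{Zh} together with the $J$-nef technique of \cite{LZ}: the point is that $F$ is $J$-nef, i.e. $F\cdot C\ge 0$ for every $J$-holomorphic curve $C$. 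Combined with $F\cdot F=0$ this controls all degenerations, because in any Gromov limit one has components $A_i$ with $A_i\cdot F\ge 0$ and $\sum_i A_i\cdot F=F\cdot F=0$, forcing each component to be vertical, $A_i\cdot F=0$. This both keeps the limit in class $F$ and yields part (a): writing $F=\sum_i A_i$ gives $A_i^2=A_i\cdot F-\sum_{j\ne i}A_i\cdot A_j=-\sum_{j\ne i}A_i\cdot A_j\le 0$ by positivity of intersections between distinct components of a connected curve, and the components are spheres since the arithmetic genus of $F$ vanishes.

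Next I would establish the \emph{global foliation}. By positivity of intersections, two fiber-class curves sharing a point must share a component, and since $F\cdot F=0$ one concludes that distinct fibers are disjoint; hence the fibers partition $M$. The leaf space is then identified with a closed surface of genus $g$: topologically $M$ is a ruled surface over $\Sigma_g$, and the partition into fiber-class curves recovers this genus-$g$ base, which I call $\Sigma$. \emph{Continuity} of $f:M\to\Sigma$ is precisely the statement that the fibers vary continuously in the Gromov topology as the base point moves, which is \cite[Corollary 3.9]{Zh}.

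For part (b) the essential local input is automatic transversality: a smooth $J$-holomorphic sphere in a $4$-manifold with self-intersection $0$ (hence $c_1=2>0$) is unobstructed, so its deformations sweep out a neighborhood as a smooth family of embedded spheres, giving a local smooth $S^2$-bundle structure. Thus over the complement of the base points whose fibers degenerate, $f$ is a smooth fiber bundle, as noted immediately after \cite[Corollary 3.9]{Zh}; the degenerate set is closed and, by the dimension count, cannot accumulate, so it is finite. For part (c), when $b_2(M)=2$ the manifold is minimal and $H_2(M;\mathbb{Z})=\mathbb{Z}\langle F\rangle\oplus\mathbb{Z}\langle S\rangle$; a vertical class is of the form $aF$, and the only one carrying an irreducible sphere is $F$ itself, since $aF$ with $a\ge 2$ violates the adjunction genus bound. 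Hence every fiber is irreducible, so no reducible fibers occur and the local bundle structure globalizes to a smooth $S^2$-bundle.

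The main obstacle throughout is the passage to an \emph{arbitrary} tamed $J$, where one cannot assume the fibration is holomorphic and where generic-$J$ transversality and Gromov--Witten arguments are unavailable. Ruling out horizontal bubble components and multiple covers, and proving that the singular fibers are isolated, is exactly where the $J$-nef positivity of \cite{LZ} and the structural results of \cite{Zh} do the heavy lifting; the remaining steps are applications of Gromov compactness, positivity of intersections, and automatic transversality.
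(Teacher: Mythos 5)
Your proposal is correct in outline and rests on exactly the same sources the paper itself invokes: the paper gives no independent proof of this theorem but attributes existence to \cite[Theorem 1.2]{Zh} via the $J$-nef technique of \cite{LZ}, continuity to \cite[Corollary 3.9]{Zh}, and the local bundle structure to the discussion following that corollary, all of which you identify and flesh out with the standard positivity-of-intersections and automatic-transversality arguments. The only slightly shaky step is your claim that the singular fibers ``by the dimension count, cannot accumulate''---the correct reason they are finite in number is homological (each reducible fiber contributes a component of negative self-intersection, and such classes are bounded by $b_2(M)-2$)---but this is internal to the cited result rather than a gap in the paper's own argument.
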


We refer to the map $f$ in Theorem \ref{fibration} as the moduli map:

\begin{definition}
Let $(M,\omega)$ an irrational ruled symplectic $4$-manifold, and $J$ a tamed almost complex structure. Then, the continuous map $f: M \rightarrow \Sigma$ provided by Theorem \ref{fibration} is called the moduli map. 
\end{definition}

We will analyze the behavior of this fibration under cyclic group actions. The following lemma, used repeatedly in our arguments, shows that the moduli map induces an isomorphism on first integral homology group. For a topological space $X$, let $\tilde{H}_i(X)$ denote the reduced integral homology of $X$.

\begin{lemma} \label{modfirsthom}
Let $(M,\omega)$ be an irrational ruled symplectic $4$-manifold, and $J$ a tamed almost complex structure. Then the moduli map $f : M \rightarrow \Sigma$ induces an isomorphism $f_{*}: H_{1}(M,\mathbb{Z}) \rightarrow H_{1}(\Sigma,\mathbb{Z}) $. 
\end{lemma}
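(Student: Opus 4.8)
The plan is to exploit the fibration structure from Theorem \ref{fibration}, which presents $M$ as a (possibly singular) fibration $f : M \to \Sigma$ whose generic fiber is a $J$-holomorphic sphere in the class $F$. Since $S^2$ is simply connected and has trivial $H_1$, one expects the fiber to contribute nothing to $\pi_1$ or $H_1$ of the total space, so that $f_*$ on $H_1$ should be an isomorphism onto $H_1(\Sigma,\mathbb{Z}) \cong \mathbb{Z}^{2g}$. The first thing I would do is record that $b_1(M) = 2g$: an irrational ruled surface is a (blowup of an) $S^2$-bundle over $\Sigma_g$, blowing up does not change $b_1$, and an $S^2$-bundle over $\Sigma_g$ has $b_1 = 2g$ by the Gysin sequence or the Leray--Hirsch/homotopy exact sequence. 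Hence the source and target of $f_*$ are both free abelian of rank $2g$, and it suffices to prove $f_*$ is surjective and has no kernel, or equivalently that it is an isomorphism on $H_1$.

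\medskip

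The cleanest route is to produce a section-like splitting. First I would choose an embedded $J$-holomorphic sphere $S$ of self-intersection $0$ over a regular value (part b) of Theorem \ref{fibration}), and a smooth section $s : \Sigma \to M$ of the fibration over a large open set, or at least a continuous map $g : \Sigma \to M$ with $f \circ g$ homotopic to the identity. Concretely, since $c).$ already gives an honest smooth $S^2$-bundle when $b_2(M)=2$, in that case a genuine global smooth section $s$ exists (an oriented $S^2$-bundle over a surface always admits a section), and then $f_* \circ s_* = \mathrm{id}$ on $H_1(\Sigma)$, so $f_*$ is surjective; combined with the rank count this forces $f_*$ to be an isomorphism. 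For the general (blown-up) case I would reduce to this by noting that $f$ restricted to the complement of the finitely many singular fibers is a smooth $S^2$-bundle over $\Sigma$ minus finitely many points, which still carries a section, and then check that the singular fibers (unions of spheres) do not create extra $H_1$.

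\medskip

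The step I expect to be the main obstacle is controlling the singular fibers in the non-minimal case: a priori $f^{-1}(p)$ for the finitely many special $p$ is a connected configuration of spheres of non-positive self-intersection, and I must verify that such configurations are simply connected (a tree of spheres is), so that they contribute no $H_1$ and the Mayer--Vietoris/van Kampen comparison between $M$ and the honest bundle part goes through. A clean way to finish is via the homotopy long exact sequence of the Serre fibration on the regular part: $\pi_1(\text{fiber}) = \pi_1(S^2) = 0$ gives $\pi_1(M') \cong \pi_1(\Sigma')$ for the regular loci, and filling back the singular fibers only kills elements (by van Kampen with simply connected pieces), yielding a surjection $\pi_1(M) \twoheadrightarrow \pi_1(\Sigma)$ that is compatible with $f_*$; abelianizing and invoking the rank equality $b_1(M)=2g$ then upgrades the induced surjection on $H_1$ to an isomorphism. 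I would present the argument so that the isomorphism statement follows formally once surjectivity and the Betti number count are in hand, deferring the delicate point-set verification of the fiber-bundle structure to the already-cited results of \cite{Zh}.
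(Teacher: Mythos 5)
Your argument is correct in outline, but it takes a genuinely different route from the paper. The paper's proof is a two-line application of the Vietoris--Begle mapping theorem: by \cite[Theorem 1.5]{LZ} every fiber $f^{-1}(p)$ is a tree of spheres, hence has $\tilde H_0=\tilde H_1=0$, and Vietoris--Begle then immediately gives that $f_*$ is an isomorphism on $H_1$ --- no rank count, no sections, no case distinction between regular and singular fibers, and no decomposition of $M$. You instead combine (i) the Betti number computation $b_1(M)=2g$, (ii) surjectivity of $f_*$ via a section of the $S^2$-bundle (or lifting loops through the regular locus using $\pi_1(S^2)=0$), and (iii) a van Kampen/Mayer--Vietoris comparison to check that gluing back neighborhoods of the singular fibers creates no new $H_1$; you then upgrade a surjection $\mathbb{Z}^{2g}\twoheadrightarrow\mathbb{Z}^{2g}$ to an isomorphism. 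Both proofs ultimately rest on the same geometric input --- the tree structure of the fibers --- and your approach has the merit of being more elementary (and, via Hopficity of surface groups, would even yield the isomorphism on $\pi_1$, which is stronger). The price is the extra bookkeeping in step (iii): you need a neighborhood of each singular configuration that deformation retracts onto it and whose punctured version is the restricted sphere bundle over a punctured disk; this is standard for plumbings of smoothly embedded spheres but is exactly the kind of point-set work that the Vietoris--Begle theorem is designed to package away. I would not call this a gap --- you correctly identify it as the delicate step and your sketch for handling it (simply connected trees of spheres contribute nothing under van Kampen) is sound --- but if you want the short proof, cite Vietoris--Begle as the paper does.
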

\begin{proof}
From Theorem \ref{fibration}, for each $p \in \Sigma$, the fiber $f^{-1}(p)$ is a connected union of smooth $J$-holomorphic rational curves. By \cite[Theorem 1.5]{LZ}, the configuration of components \footnote{ The configuration of curves in this context is (as an unlabeled graph) the graph whose vertices are irreducible components of $f^{-1}(p)$ and edges correspond to intersection points \cite[Definition 3.4]{LZ}.} forms a tree. Hence the reduced homology $\tilde{H}_{i}(f^{-1}(p)) = 0$ for $i=0, 1$. Then by the Vietoris-Begle mapping theorem (see \cite{VN} or \cite[page 95]{D}), $f_{*}: H_{1}(M,\mathbb{Z}) \rightarrow H_{1}(\Sigma,\mathbb{Z})$ is an isomorphism. %Page 75%).
\end{proof}

Another important tool  is the evaluation map associated with moduli spaces of pseudoholomorphic spheres, which will be used in the proof of Theorem \ref{theorem}. We recall the definition below:

\begin{definition} \label{evmap}
Let $(M,\omega)$ be a closed symplectic manifold, $A \in H_{2}(M,\mathbb{R})$, and $J$ an almost complex structure. Then: \begin{itemize}
\item[a)] Let $\mathcal{M}(A,J)$ denote the set of non-multiply covered $J$-holomorphic maps $u: S^2 \rightarrow M$ representing the class $A$, {\it i.e.}, $u_{*}[S^2] = A$.
\item[b)]  The moduli space of unparameterized curves is the quotient $$\mathcal{M}(A,J)/G, $$ where $G = \mathrm{PSL}(2,\mathbb{R})$ is the space of M\"{o}bius transformations acting on  $\mathcal{M}(A,J)$ by reparametrization.
\item[c)] The evaluation map is defined as: $$ev:  \mathcal{M}(A,J) \times_{G} S^2 \rightarrow M,$$ where $G$ acts on $\mathcal{M}(A,J) \times S^2 $ by the diagonal action $g\cdot (u,z) = ( u \circ g, g^{-1}(z))$. 
\end{itemize}
\end{definition}

The following useful lemma ensures the compatibility of almost complex structures with group actions, which is an almost complex version of \cite[Theorem 4.3.1]{P}:

\begin{lemma} \label{inv}
Suppose that a finite group $G$ acts effectively by symplectomorphisms on a closed symplectic manifold $(M,\omega)$. Then there exists a $G$-invariant, $\omega$-compatible almost complex structure $J$. Moreover, the fixed point set $\mathrm{Fix}(G)$ is an almost complex submanifold of $M$.
\end{lemma}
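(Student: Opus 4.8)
The plan is to run the standard averaging argument together with the canonical polar-decomposition construction of a compatible almost complex structure, and then to analyze the linearized action at fixed points. First I would fix an arbitrary Riemannian metric $g_0$ on $M$ and average it over $G$: set $g = \frac{1}{|G|}\sum_{h\in G} h^{*}g_0$. Since $G$ is finite this is a genuine $G$-invariant Riemannian metric, i.e. $h^{*}g = g$ for every $h \in G$.

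Next I would recall the canonical construction producing an $\omega$-compatible $J$ from the pair $(\omega, g)$. Define the bundle endomorphism $S$ of $TM$ by $\omega(u,v) = g(Su,v)$; then $S$ is invertible and $g$-skew-adjoint, so $-S^{2} = S^{t}S$ is symmetric positive-definite, and we may form the fibrewise positive square root $P = (-S^{2})^{1/2}$. Put $J := S P^{-1}$. A direct pointwise computation shows $J^{2} = -\mathrm{Id}$ and that $J$ is $\omega$-compatible, i.e. $\omega(Ju,Jv) = \omega(u,v)$ and $\omega(u,Ju) > 0$ for $u \neq 0$. The crucial feature is that this construction is natural: $J$ is built from $\omega$ and $g$ by algebraic operations performed pointwise, so any diffeomorphism preserving both $\omega$ and $g$ automatically preserves $J$. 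Since each $h \in G$ satisfies $h^{*}\omega = \omega$ (as a symplectomorphism) and $h^{*}g = g$ (by construction), we obtain $dh\circ J = J\circ dh$ for all $h$, so $J$ is the desired $G$-invariant compatible almost complex structure.

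For the fixed point set I would use the same $G$-invariant metric $g$. Fix $x \in \mathrm{Fix}(G)$. Each $h \in G$ is a $g$-isometry fixing $x$, so the geodesic exponential map is equivariant: $h\circ \exp_x = \exp_x\circ\, dh_x$ on a small ball. Hence in these normal coordinates the $G$-action is linear, and near $x$ the set $\mathrm{Fix}(G)$ is identified with the fixed subspace $V = \{v \in T_xM : dh_x v = v \text{ for all } h \in G\}$. This shows $\mathrm{Fix}(G)$ is a smooth submanifold with $T_x\mathrm{Fix}(G) = V$. To see that it is an almost complex submanifold, note that $J$-invariance at the fixed point gives that $J_x$ commutes with every $dh_x$; thus if $v \in V$ then $dh_x(J_x v) = J_x(dh_x v) = J_x v$, so $J_x v \in V$. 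Therefore $J_x$ preserves $T_x\mathrm{Fix}(G)$ at every $x \in \mathrm{Fix}(G)$, which is exactly the assertion that $\mathrm{Fix}(G)$ is an almost complex (in particular even-dimensional) submanifold.

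The argument is essentially routine; the only points requiring care are the naturality of the polar-decomposition construction of $J$, which is precisely what upgrades $G$-invariance of the metric to $G$-invariance of the almost complex structure, and the equivariant exponential map, which simultaneously yields smoothness of $\mathrm{Fix}(G)$ and identifies its tangent space with the $dh_x$-fixed subspace. I do not anticipate a genuine obstacle beyond verifying these two naturality statements.
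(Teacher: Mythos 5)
Your argument is correct and follows essentially the same route as the paper: average a compatible metric over $G$, apply the canonical polar-decomposition map from metrics to $\omega$-compatible almost complex structures (whose naturality under symplectic maps yields $G$-invariance of $J$), and then use that $J_x$ commutes with each $dh_x$ to see that the fixed subspace, which is the tangent space to $\mathrm{Fix}(G)$, is $J_x$-invariant. The only difference is cosmetic: you spell out the equivariant exponential map argument for smoothness of $\mathrm{Fix}(G)$, where the paper simply quotes this standard fact.
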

\begin{proof}
Let $g_0$ be a Riemannian metric compatible with $\omega$. Averaging over the group $G$, we define: $$g:=\frac{1}{|G|}\sum_{h\in G}h^{*} g_0,$$ which yields a $G$-invariant Riemannian metric. Since each $h\in G$ is a symplectomorphism, the averaged metric $g$ remains compatible with $\omega$. 

We now apply the map constructed in \cite[Proposition 2.5.6 (i)]{MS} (or Proposition 2.50 in the second edition of \cite{MS}) to this invariant metric. The resulting almost complex structure is $G$-invariant and compatible with $\omega$.  

We now explain why this follows from  \cite[Proposition 2.5.6]{MS} and sketch the definition of the map $r$.  Let $(V,\omega)$ be a symplectic vector space, and $\mathcal{M}et(V)$ denote the set of metrics on $V$ and $\mathcal{J}(V,\omega)$ of $\omega$-compatible almost complex structures. Recall that Sp$(V)=\hbox{Sp}(V, \omega)$ is the set of linear automorphisms of $V$ preserving the symplectic structure.

For any symplectic vector space $(V,\omega)$, there is a map $$r: \mathcal{M}et(V) \rightarrow \mathcal{J}(V,\omega)$$ assigning to each Riemannian metric on $V$ an $\omega$-compatible almost complex structure. The key properties of $r$ are: 
\begin{itemize}
\item $r(g_{J}) = J$ for any $J \in \mathcal{J}(V,\omega)$,
\item  $r (\Phi^{*}(g)) = \Phi^{*} (r(g)),$ for all $\Phi \in \hbox{Sp}(V)$, $g \in \mathcal{M}et(V)$.
\end{itemize}

Applying this to the $G$-invariant metric $g$, and using the fact that each group element acts symplectically, we find $$  a^{*}(r(g)) = r(a^{*}g)  = r(g)$$  for any $a \in G$. Hence, the resulting $\omega$-compatible almost complex structure $J:=r(g)$ is $G$-invariant.

To briefly recall the construction of $r$:  for $g \in  \mathcal{M}et(V)$,  there exists a linear automorphism $A: V\rightarrow V$ such that $$\omega(Au,v) = g(u,v) \quad \hbox{for all } u, v\in V.$$ Let $P = A^{*}A$.  Then there exists a  linear automorphism $Q$ such that $Q^{2}=P$.  The compatible almost complex structure is then given by  $$r(g)=Q^{-1}A.$$  Full details are provided in 
\cite[Proposition 2.5.6]{MS}

For the final statement, we know that the fixed point set $\mathrm{Fix}(G)$ of a smooth action by a finite group $G$ on a closed manifold is a disjoint union of smooth connected submanifolds. Let $x\in \mathrm{Fix}(G)$.  Since $J$ is $G$-invariant, the tangent space $T_xM$ inherits a complex structure $J_x$, and the fixed subspace $T_x(\mathrm{Fix}(G))\subset T_xM$ is preserved by $J_x$. Therefore, $\mathrm{Fix}(G)$ is an almost complex submanifold of $M$.
\end{proof}

\subsection{Weights of cyclic group actions} \label{weightsprel}
In this section, we recall the standard notion of weights for cyclic group actions on almost complex manifolds. 

Suppose the finite cyclic group $\mathbb{Z}_{k}$ acts on an almost complex manifold $(M,J)$, preserving the almost complex structure $J$. If $p\in M$ is a fixed point of the action,  then the tangent space $T_{p}M$ inherits the structure of a complex $\mathbb{Z}_k$-representation. 

Recall that the $k$ irreducible complex representations of $\mathbb{Z}_k$ are indexed by integers $w \mod k$, with generator $\alpha = e^{\frac{2 \pi i w}{k}}$. For a given $m \in \mathbb{Z}/k \mathbb{Z}$, the group action is given by $$m. z = \alpha^{m}.z.$$

Let $F\subset M$ be a fixed point component of the $\mathbb Z_k$-action. By Lemma \ref{inv}, $F$ is an almost complex submanifold. The only fact about weights we will need in this article is the following: For any $p \in F$, the $\mathbb{Z}_k$-representation on $T_{p}(M)$ is  independent of the choice of $p $. In particular, the weights encoding the decomposition of  $T_{p}(M)$ are constant along $F$. The proof is elementary and left to the reader.

\subsection{Automorphism groups of irrational ruled surfaces} \label{aut}

Let $S$ be a smooth projective surface over $\mathbb C$. The surface $S$ is called geometrically ruled if there exists a morphism $\pi : S \rightarrow C$, where $C$ is a smooth curve, such that for every point $p \in C$, $\pi^{-1}(p) \cong \mathbb{P}^1$. Note that $S$ is irrational if and only if the base curve $C$ is irrational. It is known that every geometrically ruled surface is isomorphic to the projectivization $\mathbb{P}(E)$ of a rank two vector bundle   $E$ over $C$ \cite{Ma}.

In this section, we  recall several results concerning the automorphism groups of such irrational ruled algebraic surfaces. The automorphism groups of geometrically ruled surfaces were classified by Maruyama in \cite{Ma}. We summarize below the aspects of this classification most relevant to our setting, particularly for irrational geometrically ruled surfaces. We recall the following important definition from \cite{Ma}:

\begin{definition} \cite{Ma} \label{Mac}
Let $(S ,\pi,C)$ be a geometrically ruled surface, so that $S \cong \mathbb{P}(E)$, where $E$ is a rank two vector bundle over $C$. Then: \begin{itemize}
\item[a)] Denote by $\mathrm{Aut}_{C}(S) $ the subgroup of $\mathrm{Aut}(S)$ given by $$\mathrm{Aut}_{C}(S) = \{g \in \mathrm{Aut}(S)| \pi \circ g = \pi\}.$$
\item[b)] A line subbundle of $E$ which has the maximal degree is called a maximal subbundle of $E$, and $M(E)$ denote the degree.
\item[c)] Set $N(E) = \deg(E)-2M(E).$ 

\item[d)] Let $\Delta\subset \mathrm{Jac}(C)$ be the subgroup of the Jacobian defined by $$\Delta := \{L\in \mathrm{Jac}(C) | E \otimes L \cong E\}.$$
\end{itemize} 
\end{definition}
The subgroup $\mathrm{Aut}_{C}(S) $ is particularly important in the paper, as any homologically trivial automorphism of $S$ must lie in this subgroup when the base curve $C$ has genus at least two. It is known that the quantity $N(E)$ is an invariant of the surface $S$ and plays a key role in the classification of automorphism groups \cite[Theorem 2]{Ma}. The subgroup $\Delta\subset \mathrm{Jac}(C)$ defined in Definition \ref{Mac}(d) is a subgroup of the $2$-torsion part of the Jacobian variety of $C$, and is therefore finite. 

The following short exact sequence constructed in \cite{Gr} will be important in our analysis.

\begin{lemma} \label{exact}  Let $(S ,\pi,C)$ be a geometrically ruled surface with $S \cong \mathbb{P}(E)$. Then, there is a short exact sequence  $$1 \rightarrow \mathrm{Aut}(E)/\mathbb{C}^* \rightarrow \mathrm{Aut}_{C}(S) \rightarrow \Delta \rightarrow 1,$$ where $\mathbb{C}^*$ acts on $E$ by scaling.
\end{lemma}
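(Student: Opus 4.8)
The plan is to realize the sequence through the relative hyperplane bundle $\mathcal{O}(1) := \mathcal{O}_{\mathbb{P}(E)}(1)$ on $S = \mathbb{P}(E)$, following Grothendieck's construction \cite{Gr}. I will use the standard facts that $\pi_* \mathcal{O}(1) \cong E$ (in the convention $\mathbb{P}(E) = \mathrm{Proj}(\mathrm{Sym}^\bullet E)$) and that the projective bundle formula gives a decomposition $\mathrm{Pic}(S) \cong \mathbb{Z}\cdot \mathcal{O}(1) \oplus \pi^* \mathrm{Pic}(C)$; in particular every line bundle on $S$ that restricts to the degree-one bundle on each fiber is uniquely of the form $\mathcal{O}(1) \otimes \pi^* L$ with $L \in \mathrm{Pic}(C)$. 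For the left-hand map, a bundle automorphism $\phi \in \mathrm{Aut}(E)$ covering $\mathrm{id}_C$ induces $\mathbb{P}(\phi) \in \mathrm{Aut}_C(S)$, defining a homomorphism $\mathrm{Aut}(E) \to \mathrm{Aut}_C(S)$. Its kernel consists of those $\phi$ acting as a fiberwise scalar $\lambda \colon C \to \mathbb{C}^*$; since $C$ is projective and connected, $\lambda$ is constant, so the kernel is exactly the global scalars $\mathbb{C}^*$, yielding the injection $\mathrm{Aut}(E)/\mathbb{C}^* \hookrightarrow \mathrm{Aut}_C(S)$.

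Next I would construct the map to $\Delta$. For $g \in \mathrm{Aut}_C(S)$, the line bundle $g^* \mathcal{O}(1)$ again restricts to degree one on each fiber, so $g^* \mathcal{O}(1) \cong \mathcal{O}(1) \otimes \pi^* L_g$ for a unique $L_g \in \mathrm{Pic}(C)$. Applying $\pi_*$ and the projection formula gives $\pi_*(g^*\mathcal{O}(1)) \cong E \otimes L_g$. On the other hand, since $g$ is an isomorphism with $\pi \circ g = \pi$, we have $g^* = (g^{-1})_*$ and $\pi \circ g^{-1} = \pi$, so $\pi_* g^* \cong \pi_*$ as functors and hence $\pi_*(g^*\mathcal{O}(1)) \cong \pi_*\mathcal{O}(1) = E$. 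Comparing the two computations yields $E \otimes L_g \cong E$, i.e. $L_g \in \Delta$. The assignment $g \mapsto L_g$ is a homomorphism because $(g_1g_2)^*\mathcal{O}(1) = g_2^*g_1^*\mathcal{O}(1) \cong \mathcal{O}(1)\otimes \pi^*(L_{g_1}\otimes L_{g_2})$, using $g_2^* \pi^* = \pi^*$.

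Finally I would verify exactness at the middle term and surjectivity onto $\Delta$. An element $g$ lies in the kernel precisely when $g^*\mathcal{O}(1) \cong \mathcal{O}(1)$; choosing such an isomorphism $\psi$ and applying $\pi_*$, together with the identification $\pi_* g^* \mathcal{O}(1) \cong E$ above, produces a bundle automorphism $\phi \in \mathrm{Aut}(E)$ with $\mathbb{P}(\phi) = g$, and the ambiguity in $\psi$ is exactly $\mathbb{C}^* = H^0(C, \mathcal{O}_C^*)$, so the kernel equals the image of $\mathrm{Aut}(E)/\mathbb{C}^*$. For surjectivity, given $L \in \Delta$ I would fix an isomorphism $\theta \colon E \xrightarrow{\sim} E \otimes L$ and set $g := \mathbb{P}(\theta)$ under the canonical identification $\mathbb{P}(E \otimes L) \cong \mathbb{P}(E)$ over $C$; tracing $\mathcal{O}(1)$ through this identification shows $L_g = L$ (up to replacing $L$ by $L^{-1}$, which is harmless since $L \mapsto L^{-1}$ permutes $\Delta$).

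The main obstacle is the exactness at the middle term: one must show that an automorphism of $S$ over $C$ fixing the class $\mathcal{O}(1)$ genuinely descends to a vector bundle automorphism of $E = \pi_* \mathcal{O}(1)$, and that the resulting map onto $\{g : L_g = 0\}$ is surjective with kernel exactly the constant scalars. This is where the relative $\mathrm{Proj}$ description $S = \mathrm{Proj}(\mathrm{Sym}^\bullet E)$ — equivalently, the careful bookkeeping of the isomorphism $\psi$ on pushforwards together with the computation $H^0(C,\mathcal{O}_C^*) = \mathbb{C}^*$ — is essential; the remaining steps are formal consequences of the projection formula and the Picard group decomposition.
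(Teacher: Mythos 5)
Your argument is correct, but it follows a genuinely different route from the paper's. The paper derives the sequence by quoting Grothendieck \cite{Gr}: one starts from the exact sequence of sheaves of groups $1 \to \mathcal{O}_C^* \to \mathrm{Aut}(E) \to \mathrm{Aut}(\mathbb{P}(E)) \to 1$ over $C$, takes the associated long exact sequence in (non-abelian) cohomology, and cites \cite[Proposition 2--Corollary, p.~202]{Gr} for the identification of the image of $\mathrm{Aut}_C(S)$ in $H^1(C,\mathcal{O}^*) \cong \mathrm{Pic}(C)$ with $\Delta$ (together with the fact that every $L \in \Delta$ satisfies $L^{\otimes 2} \cong \mathcal{O}$). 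You instead construct everything by hand: the connecting map $g \mapsto L_g$ is realized concretely as $g^*\mathcal{O}(1) \otimes \mathcal{O}(1)^{-1}$ via the decomposition $\mathrm{Pic}(S) \cong \mathbb{Z}\cdot\mathcal{O}(1) \oplus \pi^*\mathrm{Pic}(C)$, the constraint $L_g \in \Delta$ falls out of the projection formula and $\pi_*\mathcal{O}(1) \cong E$, exactness in the middle comes from the functoriality of relative $\mathrm{Proj}$ plus $H^0(C,\mathcal{O}_C^*) = \mathbb{C}^*$, and surjectivity is exhibited by $\mathbb{P}(\theta)$ for a chosen isomorphism $\theta \colon E \xrightarrow{\sim} E \otimes L$. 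Your version is more self-contained and makes the connecting homomorphism explicit (which is genuinely useful — it is, in effect, an unpacking of the cocycle argument hiding behind the citation), at the cost of having to verify the middle-term exactness carefully; the paper's version is shorter but opaque at exactly that point, since the identification of the image with $\Delta$ is delegated entirely to \cite{Gr}. The one step you should still write out in full is the claim that an isomorphism $\psi \colon g^*\mathcal{O}(1) \cong \mathcal{O}(1)$ induces $\phi \in \mathrm{Aut}(E)$ with $\mathbb{P}(\phi) = g$; you correctly flag this as the crux and name the right tool (functoriality of $S = \mathrm{Proj}(\mathrm{Sym}^\bullet E)$ applied to the automorphism of $\pi_*\mathcal{O}(1)$ obtained from $\psi$), so this is a matter of exposition rather than a gap.
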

\begin{proof}
This results is stated in  \cite[Proposition 2-Corollary, Page 202]{Gr} for vector bundles of arbitrary rank $n$. Let $E_{0}$ be a vector space of rank $n$. Then there is a exact sequence of groups $$1 \rightarrow \mathbb{C}^* \xrightarrow{i} \mathrm{Aut}(E_0) \xrightarrow{p} \mathrm{Aut}(\mathbb{P}(E_0)) \rightarrow 1,$$ where $i(\lambda) = \lambda \cdot \mathrm{Id}$ and $p(T) = [T]$ is the induced projective transformation. 

Passing to the corresponding locally free sheaves over the base curve $C$, one obtains a long exact sequence in cohomology: $$1 \rightarrow H^{0}(C, \mathcal{O}^*) \xrightarrow{\varphi_1} H^{0}(C, \mathrm{Aut}(E)) \xrightarrow{\varphi_2} \mathrm{Aut}_{C}(X)\rightarrow  H^{1}(C, \mathcal{O}^*) \rightarrow \ldots$$
Here, $ H^{0}(C, \mathcal{O}^*)\cong \mathbb{C}^*$, and  $ H^{1}(C, \mathcal{O}^*)\cong \mathrm{Jac}(C)$, the group of line bundles over $C$. In  \cite[Proposition 2-Corollary, Page 202]{Gr}, it is shown that the image of $\phi_2$ is exactly $\Delta$, and that each $L \in \Delta$ satisfies $L^{\otimes n} \cong \mathcal{O}$. Applying this in the case $n=2$ yields the claimed exact sequence. 
\end{proof}

Recall that a section of a geometrically ruled surface $\pi: S\rightarrow C$ is a morphism $f: C \rightarrow S$ such that $\pi \circ f = \mathrm{Id}_{C}$. A section is called minimal if it has the smallest possible self-intersection among all sections of $S$. The following lemma describes the relationship between minimal sections and the vector bundle structures: 

\begin{lemma} \label{intsec} \cite[Lemma 5]{Ma} Let $(S ,\pi,C)$ be a geometrically ruled surface, with $S \cong \mathbb{P}(E)$. Then:
\begin{itemize}
\item Minimal sections of $S$ correspond bijectively to maximal subbundles of $E$.
\item Moreover,  the self-intersection number of a minimal section  is equal to $N(E)$.
\end{itemize}
\end{lemma}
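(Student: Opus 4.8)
The plan is to deduce both assertions from a single formula expressing the self-intersection of a section in terms of the degree of the corresponding line subbundle. First I would recall the standard dictionary between sections of $\pi : S = \mathbb{P}(E) \to C$ and saturated line subbundles of $E$: in the lines-in-fibers convention, a section $\sigma : C \to S$ selects for each $c \in C$ a line $\ell_c \subset E_c$, and these lines assemble into a saturated line subbundle $L = \sigma^{*}\mathcal{O}_{\mathbb{P}(E)}(-1) \hookrightarrow E$; conversely a saturated line subbundle $L \subset E$ determines a section $\sigma_L$ with image $C_L$. These constructions are mutually inverse, so the correspondence is a bijection. It then remains only to track how $C_L^{2}$ depends on $L$, and in particular on $\deg L$.

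The key geometric input is the normal bundle of the section. Writing the defining sequence $0 \to L \to E \to Q \to 0$ with quotient line bundle $Q = E/L$, the vertical tangent directions of $\mathbb{P}(E)$ along $C_L$ are identified fiberwise with $\mathrm{Hom}(\ell_c, E_c/\ell_c)$, so that
$$N_{C_L/S} \cong \mathcal{H}om(L,Q) = L^{\vee} \otimes Q.$$
Since $C_L \cong C$ and the self-intersection of a section equals the degree of its normal bundle, and using $\deg E = \deg L + \deg Q$, this yields
$$C_L^{2} = \deg\!\big(L^{\vee} \otimes Q\big) = \deg Q - \deg L = (\deg E - \deg L) - \deg L = \deg E - 2\deg L.$$

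With this formula both claims follow at once. Minimizing $C_L^{2}$ is equivalent to maximizing $\deg L$ over line subbundles of $E$; by Definition \ref{Mac} the maximum value is $M(E)$, attained precisely by the maximal subbundles, so the minimal sections are exactly those corresponding to maximal subbundles. Substituting $\deg L = M(E)$ gives the minimal self-intersection $\deg E - 2M(E) = N(E)$, which is the second assertion.

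I expect the main obstacle to be the normal bundle identification $N_{C_L/S} \cong L^{\vee} \otimes Q$: this is where the geometry genuinely enters, and one must fix conventions carefully (lines-in-fibers versus the Grothendieck projectivization, with the attendant sign in $\mathcal{O}(\pm 1)$) so that the formula comes out with the correct sign. A clean cross-check that sidesteps deformation theory is to compute in the cohomology ring of $S$: with tautological class $\eta = c_1(\mathcal{O}_S(1))$ one has $\eta^{2} = \pi^{*}c_1(E^{\vee})\cdot \eta = -\deg E$ (as $c_2$ vanishes on a curve), together with $\eta \cdot f = 1$ and $f^{2}=0$ for the fiber class $f$; since $\mathcal{O}_S(-1)|_{C_L} \cong L$, writing $[C_L] = \eta + a f$ and solving $\eta \cdot [C_L] = -\deg L$ forces $a = \deg E - \deg L$ and hence $[C_L]^{2} = \deg E - 2\deg L$, recovering the same formula. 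Finally I would note that taking line subbundles to be saturated is exactly what makes the dictionary with sections bijective, completing both parts.
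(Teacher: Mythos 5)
Your argument is correct. Note, however, that the paper offers no proof of this lemma at all: it is quoted verbatim from Maruyama \cite[Lemma 5]{Ma}, so there is nothing in the text to compare against. What you have written is the standard proof of that cited result, and it is essentially Maruyama's own: the bijection between sections and saturated line subbundles $L\subset E$, the normal-bundle identification $N_{C_L/S}\cong L^{\vee}\otimes (E/L)$ giving $C_L^{2}=\deg E-2\deg L$, and then minimization of $C_L^{2}$ translating into maximization of $\deg L$, which by Definition \ref{Mac} yields exactly the maximal subbundles and the value $N(E)$. Your cohomology-ring cross-check is a nice convention-proofing device and is consistent with the lines-in-fibers normalization $\mathcal{O}_S(-1)|_{C_L}\cong L$. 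The only step you leave implicit is that the degrees of line subbundles of $E$ are bounded above so that $M(E)$ is attained; this is standard on a curve and is already presupposed by the paper's Definition \ref{Mac}(b), so no gap results.
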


Finally, we highlight the special case where the geometrically ruled surface is a product $C\times \mathbb{P}^1$. This case is distinguished in \cite[Theorem 2]{Ma} and,  in a sense, admits a more restrictive automorphism group than a general projective bundle. Since the following is claimed without proof in \cite{Ma} we give a proof below for completeness.
\begin{lemma}  \label{one}Let $C$ be a smooth projective curve of genus $g\ge 1$, and let $S = C \times \mathbb{P}^1$. Then: $$\mathrm{Aut}(S) \cong \mathrm{Aut}(C) \times \mathrm{Aut}(\mathbb{P}^1). $$
\end{lemma}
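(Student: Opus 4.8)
The plan is to exploit the two projections $\pi_{C}:S\to C$ and $\pi_{\mathbb{P}^1}:S\to \mathbb{P}^1$ and to show that any automorphism of $S$ must respect the first of these, hence induce an automorphism of the base curve $C$; the genus hypothesis $g\ge 1$ is exactly what makes the two factors asymmetric and forces this. I would begin by recording the intersection data: writing $F=[\{c\}\times \mathbb{P}^1]$ for the fiber class of $\pi_{C}$ and $B=[C\times\{p\}]$, one has $\mathrm{NS}(S)=\mathbb{Z}F\oplus \mathbb{Z}B$ with $F^2=B^2=0$, $F\cdot B=1$, and canonical class $K_{S}=(2g-2)F-2B$.

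First I would show that every $g\in \mathrm{Aut}(S)$ preserves the ruling $\pi_{C}$. Let $F_{c}=\{c\}\times \mathbb{P}^1$ be a fiber. Its image $C_{1}=g(F_{c})$ is again a smooth rational curve with $C_{1}^2=F_{c}^2=0$, so adjunction gives $K_{S}\cdot C_{1}=-2$. Writing $[C_{1}]=aF+bB$, the conditions $C_{1}^2=2ab=0$ and $K_{S}\cdot C_{1}=(2g-2)b-2a=-2$, together with the effectivity of $[C_{1}]$, force $[C_{1}]=F$: the alternative $a=0$ would require $(2g-2)b=-2$, which is impossible for $g=1$ and for $g\ge 2$ yields only the non-effective class $-B$ (when $g=2$). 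This is the one place the hypothesis $g\ge 1$ is essential — for $g=0$ the factor-swap of $\mathbb{P}^1\times\mathbb{P}^1$ sends $F$ to $B$ — and I expect it to be the main point to get cleanly right. Once $[g(F_{c})]=F$, the restriction $\pi_{C}|_{g(F_c)}$ has degree $[g(F_c)]\cdot F=0$, so $g(F_{c})$ lies in, hence equals, a single fiber. Thus $g$ permutes the fibers of $\pi_{C}$ and induces $\bar g\in \mathrm{Aut}(C)$ with $\pi_{C}\circ g=\bar g\circ \pi_{C}$. (Conceptually the same conclusion follows from functoriality of the Albanese map $S\to \mathrm{Jac}(C)$, whose fibers are the $F_{c}$ precisely because $g\ge 1$.)

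Next I would identify the kernel $\mathrm{Aut}_{C}(S)$ of the resulting homomorphism $\Phi:\mathrm{Aut}(S)\to\mathrm{Aut}(C)$, $g\mapsto \bar g$. Here $S\cong \mathbb{P}(E)$ with $E=\mathcal{O}_{C}\oplus\mathcal{O}_{C}$, so $\mathrm{Aut}(E)=\mathrm{GL}_{2}(\mathbb{C})$ (global endomorphisms of a trivial bundle are constant matrices, as $H^{0}(C,\mathcal{O}_C)=\mathbb{C}$), and $\Delta=\{L\in\mathrm{Jac}(C):L^{\oplus2}\cong\mathcal{O}_C^{\oplus2}\}=\{\mathcal{O}_C\}$ is trivial (a degree-zero $L$ with $h^{0}(L\oplus L)=2$ must be $\mathcal{O}_C$). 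Lemma \ref{exact} then gives $\mathrm{Aut}_{C}(S)\cong \mathrm{GL}_{2}(\mathbb{C})/\mathbb{C}^{*}=\mathrm{PGL}_{2}(\mathbb{C})=\mathrm{Aut}(\mathbb{P}^1)$, realized through constant matrices acting fiberwise, i.e. as the subgroup $\{\mathrm{id}_{C}\}\times \mathrm{Aut}(\mathbb{P}^1)$. (Alternatively, an element of $\mathrm{Aut}_{C}(S)$ amounts to a morphism $C\to \mathrm{Aut}(\mathbb{P}^1)=\mathrm{PGL}_2(\mathbb{C})$, which is necessarily constant since $C$ is projective and $\mathrm{PGL}_2(\mathbb{C})$ is affine.)

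Finally I would assemble the pieces. The homomorphism $\Phi$ is surjective, since $\mathrm{Aut}(C)\times\{\mathrm{id}\}$ already maps onto $\mathrm{Aut}(C)$, and its kernel is $\{\mathrm{id}_C\}\times\mathrm{Aut}(\mathbb{P}^1)$ by the previous step. Concretely, given $g\in\mathrm{Aut}(S)$, set $h=(\bar g^{-1}\times\mathrm{id})\circ g$; then $\Phi(h)=\mathrm{id}$, so $h=\mathrm{id}_C\times\psi$ for some $\psi\in\mathrm{Aut}(\mathbb{P}^1)$, whence $g=\bar g\times\psi$. This shows the natural inclusion $\mathrm{Aut}(C)\times\mathrm{Aut}(\mathbb{P}^1)\hookrightarrow \mathrm{Aut}(S)$ is surjective; since the two factors $\mathrm{Aut}(C)\times\{\mathrm{id}\}$ and $\{\mathrm{id}\}\times\mathrm{Aut}(\mathbb{P}^1)$ commute inside $\mathrm{Aut}(S)$, this inclusion is an isomorphism of groups, giving $\mathrm{Aut}(S)\cong \mathrm{Aut}(C)\times\mathrm{Aut}(\mathbb{P}^1)$.
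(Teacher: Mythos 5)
Your proof is correct, and its first step is essentially the paper's: both arguments pin down the class of $\psi(\{c\}\times\mathbb{P}^1)$ inside $H_2(S)\cong\mathbb{Z}F\oplus\mathbb{Z}B$ using $F^2=B^2=0$, $F\cdot B=1$, and conclude that every automorphism permutes the fibers of $\pi_C$. You are actually more careful here than the paper: the paper deduces $[C_1]=F$ from $C_1^2=0$ alone (implicitly using that a rational curve cannot dominate a curve of genus $\ge 1$), whereas your adjunction computation $K_S\cdot C_1=-2$ explicitly eliminates the class $a=0$ and isolates where $g\ge 1$ enters. The second halves diverge. The paper stays entirely within intersection theory: it applies the same argument to $C_2=\psi(C\times\{q\})$, showing $[C_2]=[F_C]$ and hence that the horizontal curves $C\times\{q\}$ are also permuted, which immediately yields $\psi=\psi_1\times\psi_2$. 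You instead identify the kernel $\mathrm{Aut}_C(S)$ of $\Phi:\mathrm{Aut}(S)\to\mathrm{Aut}(C)$ via Grothendieck's exact sequence (the paper's Lemma \ref{exact}) with $E=\mathcal{O}\oplus\mathcal{O}$, $\mathrm{Aut}(E)=\mathrm{GL}_2(\mathbb{C})$ and $\Delta$ trivial, or equivalently via rigidity of morphisms from the projective curve $C$ to the affine group $\mathrm{PGL}_2(\mathbb{C})$. Both routes are valid; the paper's is more elementary and self-contained for this lemma, while yours reuses machinery the paper has already set up for Section \ref{dihedral} and makes the group-theoretic structure (a split extension with the two factors commuting) explicit rather than reading the product decomposition off geometrically.
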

\begin{proof}
Let $\pi_1: S \rightarrow C$ and $\pi_2: S \rightarrow \mathbb{P}^1$ denote the projections. Let $F_{C}$ denote the homology classes of $C \times \{p\}$, and $F_{\mathbb P^1}$ the  class of $\{q\} \times \mathbb{P}^1$, for  $p \in \mathbb P^1$, $q \in C$. Then the intersection pair satisfies: \begin{equation} \label{int} (a F_{C} + b F_{\mathbb P^1})^2 = 2ab. \end{equation}

Let $\psi : S \rightarrow S$ be an automorphism. Consider the image $C_1 = \psi(\{p\} \times \mathbb{P}^1)$ for a fixed $p \in C$. This is a smooth rational curve in $S=C \times \mathbb{P}^1$ with self-intersection zero. Since $C_1^2=0$, and from Equation \eqref{int}, $C_{1}$ must be of the form $\{p'\} \times \mathbb{P}^1 $ for some $p'\in C$. That is, $\pi_{1}|_{C_1}$ is constant.

Similarly, for any $q \in \mathbb{P}^1$, the image $C_2 = \psi(C \times \{q\})$ is a smooth curve of genus $g$ with self-intersection zero. By the above, $C_2\cdot F_{\mathbb P^1}=1$. Since $C_{2}^2 = 0$ and $C_2\cdot F_{\mathbb P^1}=1$, Equation  (\ref{int}) implies that $[C_2] = [F_C]$. In particular, $(\pi_{2})_{*}[C_2] = 0$, and so $\pi_{2}|_{C_2}$ is constant. That proves that $C_2 = C \times \{q'\}$ for some $q' \in \mathbb{P}^1$.

Thus $\psi$ preserves the product structure and must be of the form $\psi = \psi_{1} \times \psi_{2}$, where $\psi_{1} \in \mathrm{Aut}(C) $ and $\psi_{2} \in \mathrm{Aut}(\mathbb{P}^1)$.
\end{proof}

The following lemma follows as an immediate consequence.

\begin{lemma} \label{holfix}
 Let $C$ be a smooth projective curve of genus $g\ge 1$, and let $S = C \times \mathbb{P}^1$. Suppose $\psi \in \mathrm{Aut}(S)$ is a finite order automorphism of order $k$,  acting trivially on $H_{1}(S,\mathbb{Z})$. Then: $$\psi = \mathrm{Id}_{C} \times \tilde{\psi},$$ where $\tilde{\psi}$ is an elliptic M\"{o}bius transformation of order $k$. In particular, the fixed point set of $\psi$ consists of two disjoint sections.
\end{lemma}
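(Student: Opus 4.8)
The plan is to leverage the product decomposition of the automorphism group furnished by Lemma \ref{one}. Writing $S = C \times \mathbb{P}^1$ with projections $\pi_1 : S \to C$ and $\pi_2 : S \to \mathbb{P}^1$, Lemma \ref{one} lets me factor the finite-order automorphism as $\psi = \psi_1 \times \psi_2$ with $\psi_1 \in \mathrm{Aut}(C)$ and $\psi_2 \in \mathrm{Aut}(\mathbb{P}^1)$, each of finite order dividing $k$. The first step is to convert the homological hypothesis into a condition on $\psi_1$ alone. By the K\"unneth formula, $H_1(S,\mathbb{Z}) \cong H_1(C,\mathbb{Z}) \oplus H_1(\mathbb{P}^1,\mathbb{Z}) = H_1(C,\mathbb{Z})$, and under this identification $\psi_* = (\psi_1)_*$. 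Hence $\psi$ acts trivially on $H_1(S,\mathbb{Z})$ precisely when $\psi_1$ acts trivially on $H_1(C,\mathbb{Z})$.

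The second step is to deduce $\psi_1 = \mathrm{Id}_C$. When $g \geq 2$ this is immediate: $\psi_1$ is a finite-order homeomorphism of $C$ inducing the identity on $H_1(C,\mathbb{Z})$, so Theorem \ref{periodic} forces $\psi_1 = \mathrm{Id}_C$. Granting this, $\psi = \mathrm{Id}_C \times \psi_2$, and since $\psi$ has order $k$ so does $\psi_2$. Any nontrivial finite-order element of $\mathrm{Aut}(\mathbb{P}^1) = \mathrm{PGL}_2(\mathbb{C})$ lifts to a diagonalizable element of $\mathrm{GL}_2(\mathbb{C})$; if its two eigenvalues coincided it would be scalar and hence projectively trivial, so it has two distinct eigenvalues and is conjugate to $z \mapsto e^{2\pi i/k} z$, an elliptic M\"obius transformation with exactly two fixed points $\{a,b\} \subset \mathbb{P}^1$. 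The fixed point set of $\psi$ is then $(C \times \{a\}) \cup (C \times \{b\})$, two disjoint sections of $\pi_1$, as claimed.

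The genus-one case is where I expect the main difficulty, since Theorem \ref{periodic} is unavailable and, in fact, $\psi_1$ need not be the identity. For an elliptic curve $C$ the automorphism group fits in $1 \to C \to \mathrm{Aut}(C) \to \mathrm{Aut}(C,0) \to 1$, where the translation subgroup $C$ acts trivially on $H_1(C,\mathbb{Z})$ while the point-stabilizer $\mathrm{Aut}(C,0)$ (cyclic of order $2$, $4$, or $6$) injects into $\mathrm{GL}(H_1(C,\mathbb{Z})) = \mathrm{GL}_2(\mathbb{Z})$. Thus $(\psi_1)_* = \mathrm{Id}$ forces $\psi_1$ to be a translation, necessarily by a $k$-torsion point. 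The obstacle is exactly that such a translation can be nontrivial yet fixed-point-free, so the literal conclusion $\psi = \mathrm{Id}_C \times \tilde\psi$ requires ruling out a nonzero translation component; this is the one point where the argument genuinely differs from the higher-genus case.

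To resolve this I would argue that the translation component must vanish once $\psi$ is known to have a fixed point: a nontrivial translation of $C$ acts freely, so $\psi = \psi_1 \times \psi_2$ would itself be fixed-point-free, and therefore $\mathrm{Fix}(\psi) \neq \emptyset$ forces $\psi_1 = \mathrm{Id}_C$, after which the $g \geq 2$ analysis of $\psi_2$ applies verbatim. Absent a fixed-point assumption, the clean statement one can always prove is the dichotomy that $\psi$ is either fixed-point-free (when $\psi_1$ is a nontrivial torsion translation) or of the stated form with $\mathrm{Fix}(\psi)$ equal to two disjoint sections. I would emphasize that this dichotomy is already sufficient for the intended application in Corollary \ref{q9chen}: in neither case is the fixed point set connected and nonempty, so the comparison with the connected fixed set of the involution in Theorem \ref{exampleir} goes through regardless.
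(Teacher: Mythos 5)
Your argument follows the paper's route exactly: decompose $\psi=\psi_1\times\psi_2$ via Lemma \ref{one}, transfer the homological hypothesis to $\psi_1$ through $H_1(S,\mathbb{Z})\cong H_1(C,\mathbb{Z})$, kill $\psi_1$ by Theorem \ref{periodic}, and identify $\psi_2$ as an elliptic M\"obius transformation with exactly two fixed points. For $g\ge 2$ the two proofs coincide step for step.

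The valuable part of your write-up is the genus-one discussion, because you have put your finger on a genuine defect in the paper's own argument. The paper disposes of $\psi_1$ with the sentence ``as $C$ has genus at least one, it follows from Theorem \ref{periodic} that $\psi_1=\mathrm{Id}_C$,'' but Theorem \ref{periodic} is stated, and is only true, for genus at least two. As you observe, for an elliptic curve $C$ the translation by a nontrivial $k$-torsion point is a finite-order holomorphic automorphism acting trivially on $H_1(C,\mathbb{Z})$ (the paper's own Example \ref{torusexample} is such a map), so $\psi_1\times\mathrm{Id}$ is a counterexample to the lemma as stated at $g=1$: it is not of the form $\mathrm{Id}_C\times\tilde{\psi}$ and it is fixed-point free. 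Both of your proposed repairs are correct: either add the hypothesis $\mathrm{Fix}(\psi)\neq\emptyset$, which forces the translation component to vanish since a nontrivial translation acts freely, or state the dichotomy that $\mathrm{Fix}(\psi)$ is either empty or a union of two disjoint sections. You are also right that the dichotomy is all that Corollary \ref{q9chen} requires, since the involution of Theorem \ref{exampleir} has a nonempty connected fixed point set. The lemma (or its proof) should be amended along the lines you indicate; your treatment of the $g\ge 2$ case and of $\psi_2$ needs no changes.
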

\begin{proof}
By Lemma \ref{one}, we have $\psi = \psi_1 \times \psi_2$, where $\psi_{1} \in \mathrm{Aut}(C) $ and $\psi_{2} \in \mathrm{Aut}(\mathbb{P}^1)$. Since $\psi$ acts trivially on $H_1(C, \mathbb Z)$, and $H_1(S, \mathbb Z)\cong H_1(C, \mathbb Z)$, it follows that $\psi_{1}$ must act trivially on $H_1(C, \mathbb Z)$. But as $C$ has genus at least one, it follows from Theorem \ref{periodic} that $\psi_1=\mathrm{Id}_C$. Hence, $\psi=\mathrm{Id}_C\times \psi_2$. 

Since $\psi$ has finite order $k$, so does $\psi_{2}$, and thus $\psi_{2}\in \mathrm{Aut}(\mathbb P^1)$ is an elliptic M\"obius transformation of $\mathbb{P}^1$. Elliptic M\"obius transformations have exactly two fixed points on $\mathbb P^1$, so the fixed point set of $\psi$ is the union of two disjoint sections. 
\end{proof}

\subsection{Background on Question \ref{kedra} and related work}\label{back}
For surfaces, Question \ref{kedra} has an affirmative answer. The analysis naturally divides into two cases, depending on whether the surface $M$ is a sphere or has higher genus.
\bigskip

\noindent{{\bf Case 1: $M=S^2$}} 

\medskip
We present two arguments for this case. 

\medskip
\noindent{{\bf First Argument (Topological Conjugacy):}}
\medskip

A classical result of K\'er\'ekjart\`o and Eilenberg \cite{CKem} provides a topological classification of periodic homeomorphisms of the $2$-sphere:

\begin{theorem}\label{KE}
Let $f: S^2\rightarrow S^2$ be a periodic homeomorphism. Then there exists $r\in \mathrm{O}(3)$ and a homeomorphism $h:S^2\rightarrow S^2$ such that $$f=h\circ r\circ h^{-1}.$$
\end{theorem}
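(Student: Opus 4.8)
The plan is to upgrade the purely topological hypothesis to a geometric one and then exploit the rigidity of finite groups of automorphisms of $\mathbb{CP}^1$. Write $G=\langle f\rangle\cong\mathbb{Z}_n$ for the cyclic group generated by $f$; it suffices to produce a homeomorphism $h$ conjugating the whole $G$-action into $\mathrm{O}(3)$, acting on $S^2$ as the unit sphere of $\mathbb{R}^3$. I would first dispose of the trivial case $f=\mathrm{Id}$, and otherwise keep track of whether $f$ is orientation-preserving, since the orientation-reversing elements of $G$ will play a slightly different role at the end.

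The heart of the argument is to construct a $G$-invariant conformal structure on $S^2$. Because $f$ is only assumed continuous, one cannot directly average a Riemannian metric, so the first step is to replace the topological $G$-action by a topologically conjugate smooth one, invoking the fact that finite group actions on surfaces can be smoothed. Once the action is smooth, I would average any Riemannian metric over $G$ to obtain a $G$-invariant metric $g=\tfrac1n\sum_{k}(f^k)^{\ast}g_0$; in dimension two its conformal class is a $G$-invariant complex structure $J$ (rotation by a right angle), with the orientation-reversing elements acting $J$-antiholomorphically. By the uniformization theorem $(S^2,J)\cong\mathbb{CP}^1$, and under this identification the orientation-preserving elements of $G$ act by M\"obius transformations while the orientation-reversing ones act by anti-M\"obius transformations. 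Thus $G$ embeds as a finite group of (anti)holomorphic automorphisms of $\mathbb{CP}^1$.

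It then remains to conjugate this finite group into $\mathrm{O}(3)$. Here I would use that $\mathrm{Aut}(\mathbb{CP}^1)$ together with its anti-holomorphic extension is exactly the isometry group $\mathrm{Isom}(\mathbb{H}^3)$ acting on the boundary sphere $\partial\mathbb{H}^3=\mathbb{CP}^1$. A finite group of isometries of hyperbolic $3$-space fixes a point $p$ (by the standard center-of-mass and convexity argument), and its linearization on $T_p\mathbb{H}^3\cong\mathbb{R}^3$ is orthogonal; identifying the boundary sphere with the unit sphere of $T_p\mathbb{H}^3$ then realizes the $G$-action as a subgroup of $\mathrm{O}(3)$ acting on $S^2$. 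Composing this identification with the uniformizing homeomorphism of the previous step produces the desired conjugacy $h$, so that $f=h\circ r\circ h^{-1}$ with $r\in\mathrm{O}(3)$.

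The step I expect to be the main obstacle is the first one: producing the $G$-invariant conformal structure from a merely continuous action, i.e. the smoothing (or an equivalent direct) argument. The classical way to bypass smoothing is the Kér\'ekjárt\`o route itself: an orientation-preserving $f\neq\mathrm{Id}$ has exactly two fixed points (by a Lefschetz and Smith-theory count together with the local fact that a periodic planar homeomorphism fixing a point is conjugate to a rotation), it acts freely on the complementary open annulus, and a free periodic action on the annulus is conjugate to a rotation; gluing the local rotation models at the two fixed points yields the global conjugacy, with the orientation-reversing case handled by composing with a reflection. Either way, it is precisely the passage from topological to geometric control that carries all the difficulty, whereas the subsequent uniformization and hyperbolic-fixed-point steps are routine.
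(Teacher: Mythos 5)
The paper does not actually prove Theorem \ref{KE}: it is stated as the classical K\'er\'ekjart\`o--Eilenberg theorem and cited to Constantin--Kolev, so there is no in-paper argument to match against; your proposal is a plausible reconstruction, with one caveat. Your primary route (smooth the action, average a metric, uniformize, then conjugate the resulting finite group of holomorphic and antiholomorphic automorphisms of $\mathbb{CP}^1$ into $\mathrm{O}(3)$ via a fixed point in $\mathbb{H}^3$) is sound from the second step onward --- indeed the paper runs exactly this uniformization argument as its ``Second Argument'' in Section \ref{back}, but only for periodic \emph{symplectomorphisms}, where smoothness is given and averaging is immediate. For a merely continuous $f$ the first step is essentially circular: the smoothability of finite topological group actions on surfaces rests on local linearity at fixed points, which in dimension two \emph{is} K\'er\'ekjart\`o's local theorem, so that route relocates the difficulty rather than resolving it. You recognize this, and the fallback you sketch --- exactly two fixed points for orientation-preserving $f\neq\mathrm{Id}$ via Lefschetz plus Smith theory (applied to $f^{n/p}$ for a prime $p$ dividing the order, then using $\mathrm{Fix}(f)\subset\mathrm{Fix}(f^{n/p})$ and the fact that $f$ permutes this two-point set while fixing at least one point), conjugating the induced free action on the complementary annulus to a rotation, and matching with the local rotation models at the two fixed points --- is precisely the content of the cited reference and is the correct proof. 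Two points deserve more care than the sketch gives them: the gluing of the annulus conjugacy with the local models at the ends is where the real technical work lies, and the orientation-reversing case is not just ``compose with a reflection'' (one analyzes $f^2$ and distinguishes reflections, the antipodal map, and rotary reflections). As a blueprint the proposal is correct; as written it leans on black boxes (smoothability, or else the local and annulus conjugation lemmas) that carry all the weight.
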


A few remarks follow. First, since Hamiltonian diffeomorphisms preserve area, we must have $r\in \mathrm{SO}(3)$. Second,  if $f$ is a diffeomorphism , then the conjugating map $h$ can also be taken to be a diffeomorphism (see, {\it e.g.}, \cite{GL}). 

Since every element of $\mathrm{SO}(3)$ is a rotation, it lies in a circle subgroup of $\mathrm{SO}(3)$. Thus, the diffeomorphism $f$, being conjugate to $r$, is also contained in a circle action. This circle action is clearly Hamiltonian and, up to conjugation, essentially unique. 

\medskip
\noindent{{\bf Second Argument (Holomorphic Realization):}}
\medskip

This approach, which forms the base case for later discussions of irrational ruled $4$-manifolds,  uses almost complex geometry. By Lemma \ref{inv}, any periodic symplectomorphism $f$ preserves some $\omega$-compatible almost complex structure $J$. On $S^2$, this $J$ is biholomorphic to the standard complex structure by the uniformization theorem. The group of biholomorphisms of $\mathbb {CP}^1$ is the M\"obius group, and every M\"obius transformation is conjugate to an element of SO$(3)$. Hence, $f$ again lies in a Hamiltonian circle action.  
\bigskip

\noindent{{\bf Case 2: $M$ is a surface of genus $g\ge 1$}} 

\medskip

In this case, we show that no non-trivial periodic Hamiltonian diffeomorphisms can exist. To this end, we apply \cite[Theorem 1.4]{FH}: 

\begin{theorem}\label{FHhyp}
Suppose $F: S\rightarrow S$ is a non-trivial Hamiltonian diffeomorphism of a closed, oriented surface $S$ of genus at least one. Then there exist integers $n>0$ and $p>0$ such that $F^n$ has a periodic point of period $k$ for every $k\ge p$. 
\end{theorem}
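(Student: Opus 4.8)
The statement is the theorem of Franks and Handel \cite{FH}, and I will outline the strategy I would follow, which rests on the Thurston--Nielsen dichotomy between positive and zero topological complexity. The first move is to extract the two structural consequences of the Hamiltonian hypothesis: $F$ is area-preserving and isotopic to the identity, and its flux---equivalently its mean rotation vector in $H_1(S;\R)$---vanishes. Fixing an identity isotopy $\{F_t\}$ and lifting to the universal cover $\widetilde S$ yields a canonical lift $\widetilde F$, and the zero-flux condition states precisely that the deck-translation displacement of $\widetilde F$ integrates to zero against the area form. This is the hypothesis that distinguishes Hamiltonian maps from general area-preserving ones (for instance from the torus rotations $f_{p,q}$, which have nonzero flux).

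Next I would secure the existence of periodic points and, crucially, control their rotation behaviour. For genus $g\ge 2$ the Lefschetz number of $F$ equals $\chi(S)=2-2g\ne 0$, since $F$ acts as the identity on $H_*(S)$, so $F$ has a fixed point; on the torus, where the Lefschetz number vanishes, the zero-flux condition together with an area/index argument (Handel's fixed point theorem for area-preserving surface homeomorphisms) again produces one. The substantive point is that the lifts of the periodic points to $\widetilde S$ need not all translate in the same manner, and it is this spread of rotation vectors, forced ultimately by non-triviality, that I would convert into orbits of many periods.

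The heart of the argument is a reduction via the Thurston--Nielsen canonical form \cite{T}. Arguing by contradiction, assume that no power $F^n$ has periodic points of period $k$ for all large $k$; then, after passing to a power, the periodic point set may be taken to be a finite invariant set $R$, and one studies the mapping class of $F$ on $(S,R)$. If this class has a pseudo-Anosov component, the exponential growth of its Nielsen classes already yields periodic orbits of every sufficiently large period, contradicting the assumption. So no pseudo-Anosov piece can occur, and $F$ is isotopic rel $R$ to a map built only from finite-order and annular (reducing) pieces.

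The final and hardest step is to show that this remaining case is incompatible with $F$ being non-trivial. A finite-order piece acts trivially on the homology of its support, so for $g\ge 2$ Theorem \ref{periodic} forces it to be the identity there, while on the torus the vanishing of the flux rules out the non-trivial homologically trivial finite-order models; thus a purely finite-order reduction would make $F$ isotopic to the identity rel $R$ and, after the shadowing that matches $F$ to its canonical form, equal to the identity. Non-triviality therefore forces a genuine annular piece carrying a non-degenerate rotation interval, and an area-preserving Poincar\'e--Birkhoff theorem of Franks then supplies periodic points of all large periods. The delicate part, which is the technical core of \cite{FH}, is the bookkeeping that ties the flux, the rotation vectors of the lifted periodic points, and the Thurston--Nielsen reduction together so that these alternatives are genuinely exhaustive; I would import Handel's global shadowing theorem and Franks' periodic-point results for this, rather than attempt to reprove them.
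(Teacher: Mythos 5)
The paper does not prove this statement at all: Theorem \ref{FHhyp} is quoted verbatim as \cite[Theorem 1.4]{FH} and used as a black box, so there is no internal proof to compare yours against. What you have written is a high-level reconstruction of the original Franks--Handel argument, and as an outline it is broadly faithful: the reduction to the Thurston--Nielsen canonical form rel a finite invariant set, the dichotomy between a pseudo-Anosov piece (which yields all sufficiently large periods) and a reduction into finite-order and annular pieces, and the use of Franks' rotation-number/Poincar\'e--Birkhoff results on the annular pieces together with Handel's global shadowing are indeed the main ingredients of \cite{FH}. Two caveats. First, your sketch is not a proof --- every hard step (global shadowing, the exhaustiveness of the alternatives, the rotation-interval argument on annuli, and the fixed-point existence on $T^2$, which in \cite{FH} rests on the Conley--Zehnder theorem rather than a Lefschetz count) is imported, so the proposal ultimately carries no more content than the citation the paper already makes. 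Second, the claim that a purely finite-order reduction makes $F$ ``equal to the identity'' after shadowing overstates what global shadowing delivers; it controls the dynamics up to bounded distance in the universal cover, and the actual contradiction with non-triviality in that branch is more delicate. Neither point is a mathematical error in your strategy, but you should present this as a summary of \cite{FH} rather than as an independent proof.
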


This theorem implies that any non-trivial Hamiltonian diffeomorphism on a higher genus surface cannot be of finite order, and thus  no non-trivial periodic Hamiltonian diffeomorphisms exist on such surfaces.

Thus, we obtain the following consequence: 

\begin{prop}\label{surfext}
Any periodic Hamiltonian diffeomorphism on a closed surface extends to a Hamiltonian circle action. 
\end{prop}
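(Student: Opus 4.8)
The plan is to split the argument according to the genus of the closed surface $M$, exactly mirroring the two-case structure assembled just above, and in each case to exhibit a Hamiltonian circle action containing the given periodic Hamiltonian diffeomorphism $f$.

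For the genus-zero case $M = S^2$ I would argue as follows. Since $f$ is a periodic diffeomorphism, Theorem \ref{KE} yields a homeomorphism $h$ and an element $r \in \mathrm{O}(3)$ with $f = h r h^{-1}$; because $f$ is smooth one may take $h$ to be a diffeomorphism, and because $f$ preserves an area form one has $r \in \mathrm{SO}(3)$. Every element of $\mathrm{SO}(3)$ is a rotation and hence lies in a one-parameter rotation subgroup $\{r_t\}_{t \in S^1}$; conjugating this circle by $h$ produces a smooth $S^1$-action $\phi_t = h r_t h^{-1}$ on $S^2$ which contains $f$ and preserves the area form $(h^{-1})^{*}\omega_{\mathrm{round}}$. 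Averaging $\omega$ over this circle (or invoking an equivariant Moser argument) produces a $\phi_t$-invariant symplectic form, so $\{\phi_t\}$ is a symplectic circle action; and since $H^{1}(S^2;\mathbb{R}) = 0$, every symplectic circle action on $S^2$ is automatically Hamiltonian. Thus $f$ extends to a Hamiltonian circle action. Alternatively, one can run the holomorphic realization argument: by Lemma \ref{inv} the map $f$ preserves an $\omega$-compatible $J$, which by uniformization identifies $(S^2,J)$ with $\mathbb{P}^1$, realizes $f$ as a finite-order M\"obius transformation conjugate into $\mathrm{SO}(3)$, and hence places $f$ inside a Hamiltonian circle subgroup.

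For the positive-genus case $g \geq 1$ the point is that there is nothing non-trivial to extend. Suppose, for contradiction, that $f$ is a \emph{non-trivial} periodic Hamiltonian diffeomorphism. Then Theorem \ref{FHhyp} provides integers $n>0$ and $p>0$ such that $f^{n}$ has a periodic point of period $k$ for every $k \geq p$, which is impossible for a finite-order map (its iterates have bounded orbit lengths). Hence $f = \mathrm{Id}_{M}$, and the identity trivially extends to the constant Hamiltonian circle action. Combining the two cases gives Proposition \ref{surfext}.

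The substantive content is entirely imported from the cited topological and dynamical theorems, so there is no deep obstacle; the only points that genuinely require care are both in the genus-zero case, namely the reduction $r \in \mathrm{O}(3) \Rightarrow r \in \mathrm{SO}(3)$ from area-preservation together with the smoothability of the conjugacy $h$, and the upgrade of the conjugated circle action to a genuinely \emph{symplectic} (hence, on $S^2$, Hamiltonian) action for the given form $\omega$. Both are handled by standard averaging/Moser techniques and by the vanishing of $H^{1}(S^2;\mathbb{R})$, and neither presents a real difficulty given the stated results.
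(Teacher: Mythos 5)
Your proposal is correct and follows essentially the same two-case route as the paper: the K\'er\'ekjart\`o--Eilenberg conjugacy (with the $\mathrm{SO}(3)$ and smoothability refinements, plus the alternative holomorphic realization) for $S^2$, and the Franks--Handel theorem to show any finite-order Hamiltonian diffeomorphism of a positive-genus surface is the identity. The only addition is your explicit Moser/averaging remark to make the conjugated circle action symplectic for the given form, a point the paper asserts without comment.
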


In higher dimensions, the analysis naturally divides into two cases, analogous to the surface case: uniruled and non-uniruled symplectic manifolds. For the non-uniruled manifolds, there are results analogous in spirit to Theorem \ref{FHhyp}, many of which are inspired by the Conley conjecture \cite{GGbey}.

In general, the following inclusions hold among groups of symplectomorphisms: $$\mathrm{Ham}(M, \omega)\subset \mathrm{Symp}_0(M, \omega)\subset \mathrm{Symp}_h(M, \omega),$$ where:
\begin{itemize}
\item Ham$(M, \omega)$ is the group of Hamiltonian diffeomorphisms,
\item Symp$_0(M, \omega)$ is the identity component of the  symplectomorphism group, and 
\item Symp$_h(M, \omega)$ denotes the group of symplectomorphisms acting trivially on homology.
 \end{itemize}
  Questions \ref{kedra} and \ref{kedra'} ask whether a finite subgroup of Ham$(M, \omega)$ or Symp$_h(M, \omega)$ can be extended to a circle subgroup of $\mathrm{Ham}(M, \omega)$. 

A natural variant of the question arises for Symp$_0(M, \omega)$. When $H^1(M, \mathbb R)=0$, Ham$(M, \omega)= \mathrm{Symp}_0(M, \omega)$. In many other situations,  finite subgroups of Symp$_0(M, \omega)$ are known to lie inside Ham$(M, \omega)$. This is the case, for example, for irrational ruled $4$-manifolds with base genus at least two. For more general results, see \cite{A}. 

The other inclusion $\mathrm{Symp}_0(M, \omega)\subset \mathrm{Symp}_h(M, \omega)$ is less well understood. In the rational $4$-manifold case, there has been solid progress in computing the symplectic Torelli group $$\mathrm{Symp}_h(M, \omega)/\mathrm{Symp}_0(M, \omega),$$ wee \cite{LLW} for effective computations. A central open question in this direction, attributed to Donaldson (see \cite{SS, LLW}), is the following: 

\begin{question}[Donaldson]\label{don}
Is the symplectic Torelli group generated by squared Dehn twists along Lagrangian spheres?
\end{question}

In the case of a minimal irrational ruled  $4$-manifold, a positive answer to Question \ref{don} would imply that the symplectic Torelli group is trivial, as such manifolds contain no Lagrangian spheres. Combining this with the results of \cite{A} mentioned above, one would conclude that the symplectic involutions constructed in Theorem \ref{exampleir} are in fact Hamiltonian diffeomorphisms when the base genus is at least two. \footnote{We note that a weaker version of Question \ref{don} was posed by McDuff and Salamon (\cite[Problem 14]{MS}): whether every symplectomorphism of a minimal ruled $4$-manifold  that is smoothly isotopic to the identity is also symplectically isotopic to the identity.  This question was affirmatively answered in \cite{Mcsacs} for the symplectic forms considered in Theorem \ref{exampleir}, provided the constant $K>g$. Consequently, to show that our homologically trivial symplectic involutions are Hamiltonian, it suffices to prove that they are smoothly isotopic to the identity.

However, the results of Shevchishin-Smirnov \cite{SSell} and Buse-Li \cite{BL} show that McDuff-Salamon's question fails for irrational ruled $4$-manifolds with $b_2=3$. In particular, this provides counterexamples to Donaldson's Question \ref{don} in that setting, since such manifolds contain no Lagrangian spheres, yet their symplectic Torelli group is non-trivial.}

For non-uniruled symplectic manifolds, it is  known that they do not admit Hamiltonian circle actions \cite{Mc}. In this setting, a positive answer to Question \ref{kedra} is equivalent to the statement that such manifolds admit no finite order Hamiltonian diffeomorphisms. In particular, if a closed symplectic manifold $(M, \omega)$ admits a Hamiltonian diffeomorphism of finite order, then there must exist a spherical homology class $A\in H_2(M, \mathbb Z)$ such that $$\langle c_1(TM), A\rangle>0 \hbox{ and } \langle [\omega], A\rangle>0.$$ In contrast, the standard rotational finite-order elements in Symp$_0(T^4, \omega_{\mathrm{std}})$  are clearly not Hamiltonian.  For the Enriques surface, we have Ham$=\mathrm{Symp}_0$, since $H^1(M, \mathbb R)=0$. However, it remains  unclear to the authors whether the homologically trivial involutions of Enriques surfaces classified in \cite{MN} are Hamiltonian.

\section{Finite symplectic actions on irrational ruled $4$-manifolds}
In this section, we apply pseudoholomorphic curve techniques to study finite  group actions by symplectomorphisms on irrational ruled symplectic $4$-manifolds.

\subsection{Construction of the fibration}
In this subsection, we provide a complete description of the isomorphism types of finite groups which can act symplectically on an irrational ruled symplectic $4$-manifold with base genus at least $2$, under the assumption that the action is trivial on the first integral homology group. In Corollaries \ref{finact} and \ref{finacttorus} of the next subsection, Proposition \ref{bundle} will be applied to further describe the possible isomorphism types of such groups.

\begin{prop} \label{bundle}
Let $(M,\omega)$ be an irrational ruled symplectic $4$-manifold with base surface $\Sigma$. Suppose a finite group $G$ acts effectively by symplectomorhisms on $M$ and trivially on $H_{1}(M,\mathbb{Z})$. Then:

\begin{itemize}
\item[a)] If $g(\Sigma) > 1$,  there exists a $G$-invariant, $\omega$-compatible almost complex structure $J$ such that the fibers of the associated moduli map $f: M \rightarrow \Sigma$ are preserved by the $G$-action. Moreover, the action of $G$ on each fiber is effective.

\item[b)]  If $g(\Sigma) = 1$, there exists a  $G$-invariant, $\omega$-compatible almost complex structure $J$ and a moduli map $f: M\rightarrow \Sigma$ such that, with  $$G_{1} := \{g \in G| f \circ g =f\},$$ we have a short exact sequence $$1 \rightarrow G_1 \rightarrow G \rightarrow G_{2} \rightarrow 1 ,$$ where:

\begin{itemize} \item[b1)] $G_{1}$ acts effectively on fibers of $f$;

\item[b2)]  $G_2$ acts effectively on $\Sigma$ by homeomorphisms and trivially on $H_{1}(\Sigma,\mathbb{Z})$; 

\item[b3)] There exist non-negative integers $p,q$ such that $G_{2} \cong \mathbb{Z}_{p} \times \mathbb{Z}_{q}.$

\end{itemize}

 \end{itemize}

\end{prop}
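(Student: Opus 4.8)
The plan is to run a single construction in both cases and then specialize. First I would invoke Lemma \ref{inv} to fix a $G$-invariant, $\omega$-compatible almost complex structure $J$, and let $f : M \to \Sigma$ be the associated moduli map from Theorem \ref{fibration}. The first key point is that $G$ preserves the fibration. Since $g^{*}J = J$ for every $g \in G$, each $g$ carries $J$-holomorphic curves to $J$-holomorphic curves; in particular it sends a generic fiber, which is an embedded $J$-sphere of self-intersection $0$ in the class $F$, to another such sphere. By the uniqueness built into Theorem \ref{fibration} the image is again a fiber, which forces $g_{*}F = F$ and shows that $g$ permutes the fibers of $f$. This yields a homomorphism $G \to \mathrm{Homeo}(\Sigma)$, $g \mapsto \bar g$, characterized by $f \circ g = \bar g \circ f$. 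Applying $H_{1}(-,\mathbb{Z})$ to this relation and using that $g_{*} = \mathrm{id}$ on $H_{1}(M,\mathbb{Z})$ together with the isomorphism $f_{*}$ of Lemma \ref{modfirsthom}, I obtain $\bar g_{*} = \mathrm{id}$ on $H_{1}(\Sigma,\mathbb{Z})$. Thus the induced $G$-action on $\Sigma$ is by finite-order, homologically trivial homeomorphisms.

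For part a), where $g(\Sigma) > 1$, Theorem \ref{periodic} immediately forces $\bar g = \mathrm{id}_{\Sigma}$ for every $g$, so each $g \in G$ preserves each individual fiber, i.e.\ $f \circ g = f$. For part b), where $\Sigma = T^2$, I would instead set $G_{1} = \ker(G \to \mathrm{Homeo}(\Sigma)) = \{g : f \circ g = f\}$, a normal subgroup, and let $G_{2} = G/G_{1}$ be its image in $\mathrm{Homeo}(\Sigma)$; this produces the short exact sequence and gives b2) directly, since $G_{2}$ acts effectively on $\Sigma$ and trivially on $H_{1}(\Sigma,\mathbb{Z})$. Conclusion b3) is then exactly Lemma \ref{torusgroups} applied to $G_{2}$.

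The remaining and most delicate point is the effectiveness on fibers, namely the last sentence of a) and item b1): I must show that if $g \in G$ (respectively $g \in G_{1}$) fixes a fiber $C_{p}$ pointwise, then $g = e$. Here I would use a Lefschetz argument. Since $J$ is $G$-invariant, $\mathrm{Fix}(g)$ is an almost complex, hence even-dimensional, submanifold by Lemma \ref{inv}, and $g$ acts on the normal bundle of a fixed generic sphere $C_{p}$ by a rotation whose weight is constant along $C_{p}$ by Subsection \ref{weightsprel}. If that rotation is trivial, then $g$ fixes a neighborhood of $C_{p}$, so a component of $\mathrm{Fix}(g)$ is open and therefore equals $M$, giving $g = e$. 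If the rotation is nontrivial, then near $C_{p}$ the fixed set is exactly $C_{p}$, so every nearby generic fiber $C_{p'}$, which is invariant because the base action is trivial on this neighborhood, is disjoint from $\mathrm{Fix}(g)$. But $g|_{C_{p'}}$ is a finite-order, orientation-preserving homeomorphism of $S^2$, whose Lefschetz number equals $2 \neq 0$, so it must have a fixed point, contradicting the disjointness. Hence $g = e$, and the fiberwise action is effective.

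I expect the main obstacle to lie in making the ``$G$ permutes the fibers'' step fully rigorous, in particular verifying $g_{*}F = F$ and the continuity of the induced map $\bar g$ on the moduli space $\Sigma$, including behavior at the finitely many singular fibers, together with the normal-bundle and Lefschetz effectiveness argument, whose cleanest form is the one stated above for generic fibers and which must then be extended to the singular fibers component by component.
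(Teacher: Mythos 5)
Your proposal follows the paper's proof almost step for step: the same invariant $J$ from Lemma \ref{inv}, the same moduli map from Theorem \ref{fibration}, the same commutative diagram with Lemma \ref{modfirsthom} to show the induced base action is trivial on $H_1(\Sigma,\mathbb{Z})$, Theorem \ref{periodic} in case a), and the kernel/quotient decomposition with Lemma \ref{torusgroups} in case b). The one step you handle differently is effectiveness on the fibers: you argue locally (the normal weight along a pointwise-fixed fiber is constant, so either $\mathrm{Fix}(g)$ is open and equals $M$, or a nearby invariant fiber is disjoint from $\mathrm{Fix}(g)$ yet must contain a fixed point by Lefschetz), whereas the paper argues globally that $\mathrm{Fix}(\langle g\rangle)$ meets every fiber in at least two points, hence has a component which is a multisection, and an almost complex submanifold containing a multisection cannot also contain a fiber. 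Both arguments are correct; the paper's has the small advantage of treating singular fibers with no extra work (a nodal fiber can never lie inside the smooth submanifold $\mathrm{Fix}(g)$), which is precisely the loose end you flag at the end, and which your version would need to address component by component. On the point you single out as the main obstacle, note that the paper establishes $g_{*}F=F$ \emph{first} --- $\pm F$ are the only square-zero classes represented by embedded spheres in an irrational ruled $4$-manifold, and orientation preservation rules out $-F$ --- and only then concludes that fibers map to fibers via positivity of intersections; your phrasing ("the image is again a fiber, which forces $g_{*}F=F$") runs this implication backwards and should be reordered, but the content is the same.
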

\begin{proof}
Firstly by Lemma \ref{inv} there is a compatible almost complex structure $J$ invariant by the action of $G$. Theorem \ref{fibration} provides a fibration  $f: M\rightarrow \Sigma$ by $J$-holomorphic spheres.   

Next, let $\varphi$ be the symplectomorphism corresponding to the action of any non-trivial element in $G$. Since $\pm[F]$ are the only homology classes with square zero that can be represented by embedded spheres, it follows that $\varphi_*[F]=\pm[F]$. However, as $\varphi$ is a symplectomorphism preserving orientation, we must have $\varphi_*[F]=[F]$. 

Moreover, $\varphi$ preserves the almost complex structure $J$, and since it preserves the fiber class $[F]$, it must map each fiber to another fiber. Therefore,  homeomorphism $\varphi: M\rightarrow M$ induces  a homeomorphism $\tilde{\varphi} : \Sigma \rightarrow \Sigma$ of the base surface $\Sigma$, fitting into the following commutative diagram: $$\begin{CD}
H_{1}(M, \mathbb Z)    @>\varphi_{*}>>  H_{1}(M, \mathbb Z)\\
@VVf_{*}V        @VVf_{*}V\\
H_{1}(\Sigma, \mathbb Z)     @>\tilde{\varphi}_*>>  H_{1}(\Sigma, \mathbb Z)
\end{CD} $$ 
By assumption, $\varphi_{*}=\mathrm{Id}$. Since $f_{*} : H_{1}(M,\mathbb{Z}) \rightarrow H_{1}(\Sigma,\mathbb{Z}) $ is an isomorphism by Lemma \ref{modfirsthom}, it follows that $\tilde{\varphi}_{*} =\mathrm{Id}$.

If $g(\Sigma)>1$, since $\tilde{\varphi}$ is a periodic homeomorphism acting trivially on first homology, Theorem \ref{periodic} implies that $\tilde{\varphi}=\mathrm{Id}_{\Sigma}$. Thus $\varphi$ acts fiberwise. 

Let $n$ be the order of $\varphi$, so $\varphi$ generates an effective $\mathbb{Z}_n$-action on $M$. To complete the argument, let $H \subset \mathbb{Z}_{n}$ be any  non-trivial subgroup. By Lemma \ref{inv}, Fix$(H)$ is an almost complex submanifold of $M$, with respect to the $H$-invariant, $\omega$-compatible almost complex structure $J$. Since the action is effective, Fix$(H) \neq M$. 

Observe that Fix$(H)$ intersects every fiber in at least two points. Therefore, Fix$(H)$ must contain a $J$-holomorphic curve that intersects the fiber positively. As Fix$(H)$ is an almost complex submanifold and contains such a curve, it cannot  contain any of the fibers. Hence, the $\mathbb{Z}_{n}$-action restricted to each fiber is effective. Applying this reasoning to each non-trivial element of $G$ shows that the group acts effectively on the fibers.

If $g(\Sigma) = 1$, define $$G_{1} : = \{g \in G | f \circ g = f\},$$ and set $G_{2} : = G/G_{1}$. Then there is a short exact sequence $$1 \rightarrow G_1 \rightarrow G \rightarrow G_{2} \rightarrow 1 .$$

The fact that $G_{1}$ acts effectively on the fibers follows by the same argument as in the case $g(\Sigma)>1$. Furthermore, by the commutative diagram above, $G_{2}$ acts effectively on $\Sigma$, and its induced action on $H_{1}(\Sigma,\mathbb{Z})$ is trivial. Hence, by Lemma \ref{torusgroups}, the isomorphism type of $G_{2}$ satisfies the claimed property.  \end{proof}

Example \ref{h1noth2} illustrates that Proposition \ref{bundle} applies to a broader class of symplectic group actions beyond the homologically trivial case. As we will see in Remark \ref{h12triv}, for irrational ruled symplectic $4$-manifolds, a finite group action is homologically trivial if and only if it acts trivially on both $H_{1}(M,\mathbb{Z})$ and $H_{2}(M,\mathbb{Z})$.

\subsection{Classification of finite groups acting on irrational ruled symplectic $4$-manifolds}\label{finclass}

 We now use Proposition \ref{bundle} to classify finite subgroups of symplectomorphisms. We first restrict our attention to symplectomorphisms that act trivially on homology.  

\begin{cor}\label{finact}
Let $(M,\omega)$ be an irrational ruled symplectic $4$-manifold whose base surface has genus $g 
\geq 2$. Suppose a finite group $G$ acts effectively by symplectomorphisms and trivially on $H_1(M, \mathbb Z)$. Then $G$ is isomorphic to a finite subgroup of $\mathrm{SO}(3)$. That is, $G$ is either cyclic, dihedral, or acts via the symmetries of a regular tetrahedron, octahedron, or icosahedron. 

Moreover, every such finite group admits a homologically trivial symplectic action on some irrational ruled symplectic $4$-manifold.
\end{cor}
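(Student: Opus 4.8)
The plan is to reduce the abstract symplectic action to a holomorphic action on a single generic fiber and then invoke the classical classification of finite automorphism groups of $\mathbb{P}^1$, and for the realization to exhibit an explicit product action.

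First I would use the hypothesis $g(\Sigma)\geq 2$ to apply Proposition \ref{bundle}(a), producing a $G$-invariant, $\omega$-compatible almost complex structure $J$ such that $G$ preserves every fiber of the associated moduli map $f:M\rightarrow\Sigma$ and acts effectively on each fiber. I would then choose a point $p_{0}\in\Sigma$ whose fiber $F_{0}=f^{-1}(p_{0})$ is a smooth $J$-holomorphic sphere; such points exist by Theorem \ref{fibration}(b). Since $J$ is $G$-invariant and $G$ preserves $F_{0}$, the restricted action is by biholomorphisms of $(F_{0},J|_{F_{0}})\cong\mathbb{P}^1$, and it is effective by Proposition \ref{bundle}(a). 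Hence $G$ embeds into $\mathrm{Aut}(\mathbb{P}^1)=\mathrm{PSL}(2,\mathbb{C})$.

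Next, because $G$ is finite, its image lies, up to conjugacy, in a maximal compact subgroup of $\mathrm{PSL}(2,\mathbb{C})$ (for instance by averaging a Hermitian metric on $F_{0}$, equivalently by the K\'er\'ekjart\`o-type argument of Case 1 in Section \ref{back}). Therefore $G$ is isomorphic to a finite subgroup of $\mathrm{PU}(2)\cong\mathrm{SO}(3)$, and the stated trichotomy (cyclic, dihedral, or a tetrahedral/octahedral/icosahedral group) is precisely the classical classification of finite subgroups of $\mathrm{SO}(3)$ recalled before Theorem \ref{intro:finclass}. For the realization statement, given any finite subgroup $H\subset\mathrm{SO}(3)$ I would take $M=\Sigma_{g}\times S^2$ for some fixed $g\geq 2$ with a product symplectic form, and let $H$ act via $\mathrm{id}_{\Sigma_{g}}\times\rho$, where $\rho$ is the standard rotation action of $H$ on $S^2$. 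This is symplectic since each factor is preserved and effective since $\rho$ is; moreover $\Sigma_{g}\times S^2$ is an irrational ruled $4$-manifold. To check homological triviality (which by Remark \ref{h12triv} means triviality on both $H_{1}$ and $H_{2}$), I would note that the action fixes the $\Sigma_{g}$-factor, hence is trivial on $H_{1}(M)\cong H_{1}(\Sigma_{g})$, while by K\"unneth $H_{2}(M)$ is generated by the section class $[\Sigma_{g}\times\{pt\}]$ and the fiber class $[\{pt\}\times S^2]$, both preserved because $\rho\subset\mathrm{SO}(3)$ preserves the orientation and area class of $S^2$.

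The main obstacle, modest given the machinery already in place, is the first reduction: one must use that Proposition \ref{bundle}(a) yields effectiveness on an individual smooth fiber (not merely on the total space), so that the homomorphism $G\rightarrow\mathrm{Aut}(\mathbb{P}^1)$ is genuinely injective. The conjugation of a finite subgroup of $\mathrm{PSL}(2,\mathbb{C})$ into its maximal compact subgroup is standard, and the verification of $H_{2}$-triviality in the realization is the only other point requiring a brief check.
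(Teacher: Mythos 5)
Your proposal is correct and follows essentially the same route as the paper's proof: apply Proposition \ref{bundle} to get a fiber-preserving, fiberwise-effective action, restrict to a (smooth) fiber to embed $G$ into the M\"obius group, conjugate into the maximal compact $\mathrm{SO}(3)$, and realize any finite subgroup of $\mathrm{SO}(3)$ via the product action $\mathrm{id}_{\Sigma_g}\times\rho$ on $\Sigma_g\times S^2$. Your explicit choice of a smooth fiber and the K\"unneth check of $H_2$-triviality are slightly more careful than the paper's write-up but do not constitute a different argument.
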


\begin{proof}
By Proposition \ref{bundle}, the group $G$ preserves a $G$-invariant $\omega$-compatible almost complex structure $J$, and the associated fibration of $M$ by $J$-holomorphic spheres. Since each fiber is biholomorphic to $\mathbb{CP}^1$,  the group $G$ acts effectively by biholomorphisms on the fiber. The group of biholomorphisms of $\mathbb{CP}^1$ is the M\"obius group, and every finite subgroup of the M\"{o}bius group is conjugate to a subgroup of its maximal compact subgroup $\mathrm{SO}(3)$. The finite subgroups of SO$(3)$ are classically classified: they are either cyclic, dihedral, or the symmetry groups of the Platonic solids. 

For the existence part, consider $M=\Sigma \times S^2$, where $\Sigma$ is a surface of genus $g\ge 2$. Let $G$ act trivially on $\Sigma$ and via a standard rotation action on $S^2$. This yields a homologically trivial symplectic action of $G$ on $M$. 
\end{proof}

We remark that the classification of finite subgroups of symplectomorphisms is not known for rational $4$-manifolds, even in the case of $\mathbb {CP}^2$; see \cite{CLW} for partial results and further references.

It follows from Corollary \ref{finact} that the only finite abelian groups that can act homologically trivially on an irrational ruled symplectic $4$-manifold with base genus $g\ge 2$ are cyclic groups and the Klein four-group $\mathbb Z_2\times \mathbb Z_2$. We will later show that any homologically trivial $\mathbb Z_n$-action with $n\ge 3$ can be extended to a Hamiltonian $S^1$-action, possibly after modifying the symplectic form. However, this result does not hold in general for homologically trivial $\mathbb Z_2$-actions. 

We also prove the following statement for base genus $1$, which necessarily allows for non-trivial transformations on the base.

\begin{cor}\label{finacttorus}
Let $(M,\omega)$ be an irrational ruled symplectic $4$-manifold whose base surface has genus $g =1$. Suppose a finite group $G$ acts effectively by symplectomorphisms and trivially on $H_1(M, \mathbb Z)$. Then, there is an exact sequence $$1 \rightarrow G_1 \rightarrow G \rightarrow G_{2} \rightarrow 1 ,$$ such that: \begin{enumerate} \item  $G_1$ is isomorphic to a finite subgroup of $\mathrm{SO}(3)$; that is, $G_1 \cong \mathbb{Z}_{p}, D_{p}$, or one of the Platonic rotation groups (the symmetry group of a regular tetrahedron, octahedron, or icosahedron).  

\item There exist non-negative integers $p,q$ such that $$G_{2} \cong \mathbb{Z}_{p} \times \mathbb{Z}_{q}.$$
\end{enumerate}
\end{cor}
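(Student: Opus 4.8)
The plan is to derive this corollary almost directly from Proposition \ref{bundle}(b), which already supplies the short exact sequence $1 \rightarrow G_1 \rightarrow G \rightarrow G_2 \rightarrow 1$ together with the structural statement (b3) that $G_2 \cong \mathbb{Z}_p \times \mathbb{Z}_q$ for some non-negative integers $p,q$. Thus part (2) of the corollary is immediate, and the only remaining task is to identify the isomorphism type of the kernel $G_1$.

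For part (1), I would fix the $G$-invariant, $\omega$-compatible almost complex structure $J$ and the associated moduli map $f : M \rightarrow \Sigma$ provided by Proposition \ref{bundle}(b). By the definition $G_1 = \{g \in G \mid f \circ g = f\}$, each element of $G_1$ preserves every fiber setwise. Property (b1) guarantees that the restriction of the $G_1$-action to each fiber is effective; this is the fixed-point-set argument from the proof of Proposition \ref{bundle} (for any nontrivial subgroup $H$, the set $\mathrm{Fix}(H)$ is a proper almost complex submanifold meeting each fiber positively, hence contains no fiber). By Theorem \ref{fibration}(b), a generic fiber $f^{-1}(p)$ is a smooth $J$-holomorphic sphere, hence biholomorphic to $\mathbb{CP}^1$, so restricting to such a fiber yields a faithful action of $G_1$ by biholomorphisms of $\mathbb{CP}^1$.

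I would then invoke the standard fact that the biholomorphism group of $\mathbb{CP}^1$ is the M\"obius group $\mathrm{PSL}(2,\mathbb{C})$, and that every finite subgroup of the M\"obius group is conjugate to a subgroup of its maximal compact subgroup $\mathrm{SO}(3) = \mathrm{PSU}(2)$. Hence $G_1$ is isomorphic to a finite subgroup of $\mathrm{SO}(3)$, which by the classical classification is cyclic, dihedral, or one of the Platonic rotation groups. This is precisely the argument used in the proof of Corollary \ref{finact} for base genus at least two, now applied to the kernel $G_1$ in place of $G$ itself.

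Since nearly all the substantive content is packaged into Proposition \ref{bundle}, I do not expect a serious obstacle. The one point requiring care is to ensure effectiveness of $G_1$ on a single \emph{smooth} fiber: one must combine property (b1) with the genericity statement in Theorem \ref{fibration}(b) so that the restricted action genuinely embeds $G_1$ into the M\"obius group, rather than merely into the diffeomorphism group of a possibly reducible or singular fiber. Once this is in place, the classification of finite subgroups of $\mathrm{SO}(3)$ closes the argument.
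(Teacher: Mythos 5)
Your proposal is correct and matches the paper's (implicit) argument: the paper derives this corollary directly from Proposition \ref{bundle}(b), which supplies the exact sequence and the structure of $G_2$, and identifies $G_1$ as a finite subgroup of $\mathrm{SO}(3)$ by exactly the fiberwise-biholomorphism argument used in Corollary \ref{finact}. Your added care about restricting to a single smooth fiber is sound and consistent with the effectiveness argument in the proof of Proposition \ref{bundle}.
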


We record a structural consequence of Corollary \ref{finact}, describing the general form of any finite symplectomorphism group acting on an irrational ruled $4$-manifold with base genus $g\ge 2$.

\begin{cor}\label{finsym}
Let $(M,\omega)$ be an irrational ruled symplectic $4$-manifold whose base surface has genus $g 
\geq 2$, and let $G$ be a finite group acting effectively by symplectomorphisms on $M$. Then there exists a short exact sequence $$1 \rightarrow H_{1} \rightarrow G \rightarrow H_{2} \rightarrow 1,$$ where $H_{1}$ is a finite subgroup of $\mathrm{SO}(3)$, and $H_{2}$ is a finite subgroup of $\mathrm{SL}_{2g}(\mathbb Z)$.
\end{cor}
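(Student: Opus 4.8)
The plan is to study the action of $G$ on first homology and to split $G$ into a ``fiber part'' and a ``base part'' accordingly. Since $M$ is a blow-up of an $S^2$-bundle over $\Sigma$, its first homology is free abelian; concretely, Lemma \ref{modfirsthom} gives an isomorphism $f_*:H_1(M,\mathbb Z)\xrightarrow{\sim}H_1(\Sigma,\mathbb Z)\cong\mathbb Z^{2g}$, so that $\mathrm{Aut}(H_1(M,\mathbb Z))\cong\mathrm{GL}_{2g}(\mathbb Z)$. Consider the representation
$$\rho\colon G\longrightarrow \mathrm{Aut}(H_1(M,\mathbb Z))\cong\mathrm{GL}_{2g}(\mathbb Z),\qquad \rho(a)=a_*.$$
Setting $H_{1}:=\ker\rho$ and $H_{2}:=\operatorname{im}\rho$ yields at once the short exact sequence $1\to H_{1}\to G\to H_{2}\to 1$, with $H_{2}$ finite. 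It then remains only to identify the two factors.

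The subgroup $H_{1}$ consists precisely of those elements of $G$ acting trivially on $H_1(M,\mathbb Z)$; it therefore acts effectively by symplectomorphisms and homologically trivially on first homology. Hence Corollary \ref{finact} applies verbatim and shows that $H_{1}$ is isomorphic to a finite subgroup of $\mathrm{SO}(3)$.

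For $H_{2}$ I would show that every $\varphi\in G$ acts on $H_1(M,\mathbb Z)$ with determinant $+1$, so that $\operatorname{im}\rho\subset\mathrm{SL}_{2g}(\mathbb Z)$. Choose, via Lemma \ref{inv}, a $G$-invariant $\omega$-compatible $J$ and the associated moduli map $f:M\to\Sigma$ from Theorem \ref{fibration}. Exactly as in the proof of Proposition \ref{bundle}, since $\pm[F]$ are the only square-zero classes represented by embedded spheres and $\varphi$ preserves orientation, we have $\varphi_*[F]=[F]$; being $J$-holomorphic, $\varphi$ then carries fibers to fibers and descends to a homeomorphism $\tilde\varphi:\Sigma\to\Sigma$ with $f\circ\varphi=\tilde\varphi\circ f$, whence $f_*\varphi_*=\tilde\varphi_*f_*$ on $H_1$. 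The essential point is that $\tilde\varphi$ is orientation-preserving: over the regular locus $f$ is a smooth oriented $S^2$-bundle, $\varphi$ restricts to a bundle map covering $\tilde\varphi$, and since $\varphi$ preserves the orientation of $M$ (it is a symplectomorphism) and the fiberwise orientation (it is fiberwise holomorphic), the induced map on the base preserves orientation. An orientation-preserving homeomorphism of the closed surface $\Sigma$ preserves the algebraic intersection pairing on $H_1(\Sigma,\mathbb Z)$, hence lies in $\mathrm{Sp}_{2g}(\mathbb Z)\subset\mathrm{SL}_{2g}(\mathbb Z)$; in particular $\det\tilde\varphi_*=1$. As $\varphi_*=f_*^{-1}\tilde\varphi_*f_*$ is conjugate to $\tilde\varphi_*$, we conclude $\det\varphi_*=1$, as required.

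The main obstacle is this orientation step: one must verify that the fibration is genuinely $G$-equivariant (which is why the invariant $J$ is needed) and that the restriction of $\varphi$ to each fiber is holomorphic, so that the orientation of the total space splits compatibly into base and fiber contributions. Granting this, the inclusion $H_{2}\subset\mathrm{SL}_{2g}(\mathbb Z)$ (indeed $H_{2}\subset\mathrm{Sp}_{2g}(\mathbb Z)$) is immediate, and combined with the identification of $H_{1}$ via Corollary \ref{finact} this completes the proof.
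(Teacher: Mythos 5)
Your proposal is correct and matches the paper's intended argument: the paper records Corollary \ref{finsym} as a direct consequence of Corollary \ref{finact}, and its proof of the genus-one analogue (Corollary \ref{finsymone}) uses exactly your decomposition of $G$ by the kernel of the representation on $H_1(M,\mathbb Z)$. Your orientation argument placing the image inside $\mathrm{Sp}_{2g}(\mathbb Z)\subset\mathrm{SL}_{2g}(\mathbb Z)$ supplies the determinant check that the paper asserts without comment, and it is sound.
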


We now state the corresponding structural result when the base has genus $1$.

\begin{cor}\label{finsymone}
Let $(M,\omega)$ be an irrational ruled symplectic $4$-manifold whose base surface has genus $g 
=1$, and let $G$ be a finite group acting effectively by symplectomorphisms on $M$. Then there exist short exact sequences $$1 \rightarrow H \rightarrow G \rightarrow H' \rightarrow 1,$$

$$1 \rightarrow H_1 \rightarrow H \rightarrow H_{2} \rightarrow 1,$$
such that:
\begin{enumerate}
\item $H'\cong\mathbb{Z}_{k}$, where $k = 1,2,3,4,6$.

\item  $H_1$ is isomorphic to a finite subgroup of $\mathrm{SO}(3)$. That is, $H_1 \cong \mathbb{Z}_{p}, D_{p}$, or one of the Platonic rotation groups (the symmetry group of a regular tetrahedron, octahedron, or icosahedron).  

\item  There exist non-negative integers $p,q$ such that $$H_{2} \cong \mathbb{Z}_{p} \times \mathbb{Z}_{q}.$$
\end{enumerate}
\end{cor}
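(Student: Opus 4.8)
The plan is to peel off the $G$-action on the first homology of the base and then invoke Corollary \ref{finacttorus}. First I would apply Lemma \ref{inv} to fix a $G$-invariant, $\omega$-compatible almost complex structure $J$, and Theorem \ref{fibration} to obtain the moduli map $f : M \to \Sigma$, where $\Sigma = T^2$ is the genus-one base. Exactly as in the proof of Proposition \ref{bundle}, each $g \in G$ is an orientation-preserving symplectomorphism with $g_*[F] = [F]$ — because $\pm[F]$ are the only square-zero classes represented by embedded spheres and orientation rules out the minus sign — so $g$ permutes the $J$-holomorphic fibers and descends to a homeomorphism $\tilde g$ of $\Sigma$. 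Via the commutative square of Proposition \ref{bundle} together with the isomorphism $f_* : H_1(M,\mathbb{Z}) \xrightarrow{\sim} H_1(\Sigma,\mathbb{Z})$ of Lemma \ref{modfirsthom}, the induced $G$-action on $H_1(M,\mathbb{Z})$ is identified with the $G$-action on $H_1(\Sigma,\mathbb{Z}) \cong \mathbb{Z}^2$.

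Next I would set $H := \ker\bigl(G \to \mathrm{Aut}\, H_1(M,\mathbb{Z})\bigr)$ and $H' := G/H$, giving the first short exact sequence $1 \to H \to G \to H' \to 1$ with $H' \hookrightarrow \mathrm{Aut}\, H_1(\Sigma,\mathbb{Z}) = \mathrm{GL}_2(\mathbb{Z})$. Since each $\tilde g$ preserves the orientation of $\Sigma$ — it preserves both the orientation of $M$ and the complex, hence oriented, fibers of $f$ — the induced map on $H_1(\Sigma,\mathbb{Z})$ has determinant $+1$, so $H' \subset \mathrm{SL}_2(\mathbb{Z})$. A finite subgroup of $\mathrm{SL}_2(\mathbb{Z})$ is cyclic: by Bass--Serre theory applied to $\mathrm{SL}_2(\mathbb{Z}) \cong \mathbb{Z}_4 *_{\mathbb{Z}_2} \mathbb{Z}_6$, any finite subgroup is conjugate into $\mathbb{Z}_4$ or $\mathbb{Z}_6$, so $H' \cong \mathbb{Z}_k$ with $k$ the order of a torsion element of $\mathrm{SL}_2(\mathbb{Z})$. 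This yields item (1).

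For the second short exact sequence, the subgroup $H$ acts effectively by symplectomorphisms and, by construction, trivially on $H_1(M,\mathbb{Z})$. Corollary \ref{finacttorus} then applies verbatim to $H$ and produces $1 \to H_1 \to H \to H_2 \to 1$ with $H_1$ a finite subgroup of $\mathrm{SO}(3)$ and $H_2 \cong \mathbb{Z}_p \times \mathbb{Z}_q$, which is items (2) and (3), completing the assembly.

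The step I expect to be the real obstacle is pinning down the exact list of admissible $k$ in item (1). The orientation argument confines $H'$ to a finite subgroup of $\mathrm{SL}_2(\mathbb{Z})$, and the crystallographic restriction leaves the \emph{a priori} possibilities $k \in \{1,2,3,4,6\}$; reaching the stated list requires excluding $k=4$, which I would scrutinize carefully, since the holomorphic model $E \times \mathbb{P}^1$ over the square lattice carries an order-four automorphism (multiplication by $i$ on $E$) inducing exactly such an action on $H_1$. Deciding whether this case is genuinely obstructed — or whether the list should be enlarged to include $4$ — is where the substantive work lies; everything else is a formal combination of Lemma \ref{inv}, Lemma \ref{modfirsthom}, Proposition \ref{bundle}, and Corollary \ref{finacttorus}.
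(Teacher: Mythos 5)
Your proposal follows essentially the same route as the paper: define $H$ as the kernel of the $G$-action on $H_1(M,\mathbb{Z})$, set $H'=G/H$, apply Proposition \ref{bundle}b) (equivalently Corollary \ref{finacttorus}) to $H$ to get the second exact sequence, and identify $H'$ with a finite subgroup of $\mathrm{SL}_2(\mathbb{Z})$. Your treatment is in fact slightly more careful than the paper's in two places: you justify via Lemma \ref{modfirsthom} that the $G$-action on $H_1(M,\mathbb{Z})$ is identified with an action on $H_1(\Sigma,\mathbb{Z})$, and you supply the orientation argument placing $H'$ in $\mathrm{SL}_2(\mathbb{Z})$ rather than $\mathrm{GL}_2(\mathbb{Z})$, neither of which the paper spells out.

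On the point you flag as the real obstacle: the paper does not exclude $k=4$ by any argument. It simply asserts that ``the finite subgroups of $\mathrm{SL}_2(\mathbb{Z})$ are well known to be cyclic of order $1,2,3,6$.'' That assertion is false as stated: $\mathrm{SL}_2(\mathbb{Z})\cong \mathbb{Z}_4 *_{\mathbb{Z}_2}\mathbb{Z}_6$, so finite subgroups are cyclic of order $1,2,3,4,6$, with $\bigl(\begin{smallmatrix}0&-1\\1&0\end{smallmatrix}\bigr)$ realizing order $4$. Your suspicion is therefore justified, and your proposed example settles the question in the direction opposite to the printed statement: multiplication by $i$ on the Gaussian elliptic curve $E=\mathbb{C}/\mathbb{Z}[i]$, crossed with the identity on $\mathbb{P}^1$, is a holomorphic isometry of the product K\"ahler form, hence a symplectomorphism of finite order $4$ acting on $H_1$ with order $4$; for this action $H=\{1\}$ and $H'\cong\mathbb{Z}_4$. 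So $k=4$ genuinely occurs and the list in item (1) (and in the second bullet of Theorem \ref{intro:finclass}) should read $k=1,2,3,4,6$. Apart from this — which is an error in the paper, not in your argument — your proof is complete and matches the paper's.
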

\begin{proof}
Define $$H := \{g \in G |g_*= \mathrm{Id}: H_{1}(M,\mathbb{Z}) \rightarrow   H_{1}(M,\mathbb{Z}  )\} ,$$ and set $H' = G/H$. Then  $H$ is finite  and acts by symplectomorphisms on $(M,\omega)$ while trivially on $H_{1}(M,\mathbb{Z})$.  The claimed exact sequence for $H$ then follows directly from Proposition \ref{bundle}b). 

Finally, the finite group $H'$ acts effectively on $H_{1}(T^2,\mathbb{Z}) \cong \mathbb{Z}^2$, and hence is isomorphic to a finite subgroup of $\mathrm{SL}_{2}(\mathbb{Z})$. The finite subgroups of $\mathrm{SL}_{2}(\mathbb{Z})$ are well known to be cyclic of order $1, 2, 3,4, 6$, so $H'\cong\mathbb{Z}_{k}$ for $k = 1,2,3,4,6$.
\end{proof}

In Example \ref{nonsplit} an algebraic action of the symmetric group $S_3$ on an ruled surface $S$ over an elliptic curve is given, showing that the first short exact sequence does not split in general.

In the preceding classification of finite subgroups of symplectomorphisms, we do not see how they would act on second homology group $H_2(M, \mathbb Z)$. The following classifies the induced action of $\mathrm{Symp}(M, \omega)$  on $H_2(M, \mathbb Z)$.

\begin{prop}\label{acth2}
For an irrational ruled $4$-manifold $(M, \omega)$, the induced action of $\mathrm{Symp}(M, \omega)$ on the second homology group is a subgroup of $SG(l)$ with $l=b_2-2$. 
\end{prop}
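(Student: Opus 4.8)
The plan is to identify $SG(l)$ with the isometry group of the negative-definite part of the intersection lattice, and then to show that the homological action of every symplectomorphism is completely pinned down by its restriction to that part. Recall that $M$ is an $S^2$-bundle over $\Sigma$ blown up at $l = b_2(M)-2$ points, so $H_2(M,\mathbb{Z})$ admits a basis $B, F, E_1,\dots,E_l$ in which $F$ is the fibre class, $B\cdot F = 1$, $F^2 = 0$, and the $E_i$ are mutually orthogonal exceptional classes with $E_i^2 = -1$, all orthogonal to both $B$ and $F$. Here $SG(l)$ is the group of signed permutations, i.e.\ the isometry group $\mathrm{Aut}(\mathbb{Z}^l,-\mathrm{Id})$ of the standard negative-definite rank-$l$ lattice.

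First I would record that any $\psi \in \mathrm{Symp}(M,\omega)$ fixes the fibre class. By exactly the argument used in the proof of Proposition \ref{bundle}, $\pm F$ are the only square-zero classes carried by embedded symplectic spheres, and since $\psi$ preserves $[\omega]$ (hence preserves positivity of area) we obtain $\psi_* F = F$. Consequently $\psi_*$ preserves the primitive isotropic sublattice $F^\perp = \langle F, E_1,\dots,E_l\rangle$, and because $F$ lies in the radical of the form restricted to $F^\perp$, it descends to an isometry of the quotient $Q := F^\perp/\langle F\rangle$, which with the induced form is precisely $(\mathbb{Z}^l,-\mathrm{Id})$. This produces a homomorphism $\Phi : \mathrm{Symp}(M,\omega) \to \mathrm{Aut}(Q) = SG(l)$.

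The crux is then to show that $\Phi$ detects the full homological action, i.e.\ that if $\psi_*$ acts trivially on $Q$ then $\psi_* = \mathrm{Id}$ on all of $H_2(M,\mathbb{Z})$. Here I would invoke the extra rigidity of the canonical class: every symplectomorphism preserves $K = -c_1$, which in this basis has $E_i$-coefficient $+1$. Writing $\psi_* E_i = E_i + n_i F$ and $\psi_* B = B + dF + \sum_i a_i E_i$ (the $B$- and $F$-coefficients being forced by $\psi_* F = F$ together with the isometry property), the relations $\psi_* E_i \cdot \psi_* B = 0$ and $(\psi_* B)^2 = B^2$ give $a_i = n_i$ and $2d = \sum_i n_i^2$, while matching the $E_i$-coefficient in $\psi_* K = K$ forces $-2a_i + 1 = 1$, hence $a_i = n_i = 0$ and then $d = 0$. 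Thus $\psi_* = \mathrm{Id}$, so $\Phi$ is injective on the image of $\mathrm{Symp}(M,\omega)$ in $\mathrm{Aut}(H_2(M,\mathbb{Z}))$, which therefore embeds as a subgroup of $SG(l)$.

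The main obstacle is the geometric input in the first step: the assertion that $\pm F$ are the only embedded square-zero sphere classes is what makes $\psi_* F = F$ hold for \emph{every} symplectomorphism, and this rests on the $J$-holomorphic curve theory of \cite{Zh} (Theorem \ref{fibration}) rather than on formal lattice theory. Once $\psi_* F = F$ is available, the remaining steps are routine isometry-and-canonical-class bookkeeping; the one point needing care is verifying that preservation of $K$ genuinely eliminates the ``translation'' ambiguities $E_i \mapsto E_i + n_i F$ and $B \mapsto B + \cdots$, which is exactly what the coefficient computation above accomplishes.
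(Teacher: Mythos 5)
Your strategy is genuinely different from the paper's: the paper invokes Li--Wu's theorem that the homological image of $\mathrm{Symp}(M,\omega)$ is generated by Dehn twists along Lagrangian spheres, identifies the Lagrangian sphere classes $\pm(F-E_i-E_j),\pm(E_i-E_j)$ as a sub-root system of type $D_l$, and concludes that the image lies in the associated Weyl group; you instead bound the homological image by the group of all isometries of $H_2(M,\mathbb{Z})$ fixing $F$ and $K$. The individual steps you carry out are correct: $\psi_*F=F$ follows exactly as in Proposition \ref{bundle}, the reduction to $Q=F^{\perp}/\langle F\rangle\cong(\mathbb{Z}^l,-\mathrm{Id})$ is fine, and your coefficient computation showing that triviality on $Q$ plus preservation of $K$ forces $\psi_*=\mathrm{Id}$ is correct.

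There is, however, a genuine gap in the very first line: $SG(l)$ is \emph{not} the full isometry group $\mathrm{Aut}(\mathbb{Z}^l,-\mathrm{Id})$. As defined in the paper it is the group of \emph{even} signed permutations --- the Weyl group of $\mathrm{SO}(2l)$, of order $2^{l-1}l!$ --- which is an index-two subgroup of the hyperoctahedral group of all signed permutations. Your argument therefore only proves containment in the larger group $W(B_l)$ and misses the parity constraint that is precisely what distinguishes $SG(l)$. The good news is that the constraint can be extracted from your own framework. If $\psi_*$ induces $e_i\mapsto\epsilon_i e_{\sigma(i)}$ on $Q$, then matching $K\cdot\psi_*E_i=K\cdot E_i=-1$ forces the lift $\psi_*E_i=\epsilon_i E_{\sigma(i)}+\tfrac{1-\epsilon_i}{2}F$; writing $\psi_*B=B+dF+\sum_j a_jE_j$, the relations $\psi_*B\cdot\psi_*E_i=0$ give $a_{\sigma(i)}=(\epsilon_i-1)/2$, and then $(\psi_*B)^2=0$ yields $2d=\sum_j a_j^2=\#\{i:\epsilon_i=-1\}$, so integrality of $d$ forces an even number of sign changes. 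You need to add this (or an equivalent computation via the $F$-coefficient of $\psi_*K=K$) to land in $SG(l)$; with it, your lattice-theoretic route does give the stated bound without appealing to Li--Wu, though the paper's argument yields the finer information that the image is generated by reflections in Lagrangian sphere classes.
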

Here $SG(l)=(\mathbb Z/2\mathbb Z)^l \rtimes S_l$ is the group of even permutations $\phi$ of the set $\{-l, \cdots, -1, 1, \cdots, l\}$ satisfying $\phi(-k)=-\phi(k)$ for all $1\le k\le l$. This is the Weyl group of SO$(2l)$ and has rank $2^{l-1}l!$. 
\begin{proof}
By Theorem 1.8 of \cite{LW} (note that there is a typographical error there: it is stated for  the homological action of $\mathrm{Symp}(M, \omega)$),  the induced action of $\mathrm{Symp}(M, \omega)$ on the second homology group is generated by Lagrangian Dehn twists along Lagrangian spheres. For irrational ruled $4$-manifolds, the set of Lagrangian sphere classes is a subset of $\{\pm(F-E_i-E_j), \pm (E_i-E_j), 1\le i<j\le l\}$, which acts by even permutations on the set of $-1$-sphere classes $\{E_i, F-E_i, i=1, \cdots, l\}$. Here $F$ is the fiber class and $\{E_i\}$ is a maximal collection of orthogonal $-1$-sphere classes. This action is precisely the definition of $SG(l)$. 

In fact, the Lagrangian sphere classes form a root system which is the sub-root system of type $D_l$ generated by  $\{\pm(F-E_i-E_j), \pm (E_i-E_j), 1\le i<j\le l\}$, see Figure~\ref{fig:roots}. Therefore, the homological action of $\mathrm{Symp}(M, \omega)$ on $H_2(M, \mathbb Z)$ is the Weyl group of a sub-root system of this root system, namely a subgroup of $SG(l)$.

\begin{figure}[h] 
\begin{tikzpicture}[scale =1.3]

\draw[fill] (-0.3,0.7) circle [radius=0.05];

\draw[fill] (-0.3,-0.7) circle [radius=0.05];

\draw[fill] (1,0) circle [radius=0.05];

\draw[fill] (2.5,0) circle [radius=0.05];

\draw[fill] (4.5,0) circle [radius=0.05];

\draw[fill] (6,0) circle [radius=0.05];

\draw (1,0) -- (-0.3,0.7) ;

\draw (1,0) -- (-0.3,-0.7) ;

\draw (1,0) -- (2.5,0);

\draw[dashed] (2.5,0) -- (4.5,0);

\draw (4.5,0) -- (6,0);

\node at (-0.3,0.9) {$F - E_1 - E_2$};

\node at (1.28,0.25) {$E_2 - E_3$};

\node at (-0.3,-0.9) {$ E_1 - E_2$};

\node at (6,0.2) {$ E_{l-1} - E_{l}$};

\end{tikzpicture}
\caption{The complete root system of Lagrangian spheres with the induced homological action}
\label{fig:roots}
\end{figure}
\end{proof}

The classifications given in Theorem \ref{intro:finclass} and in Proposition \ref{acth2} are independent of each other. Indeed, for a finite group $G$ acting effectively by symplectomorphisms, the induced action on $H_2(M, \mathbb Z)$ is a subgroup of $SG(l)$ and can contribute to any component of the triple $\{H', H_1, H_2\}$. However, when we restrict to  symplectomorphisms acting trivially on $H_{1}(M,\mathbb{Z})$, the induced action on $H_2(M, \mathbb Z)$ can contribute only to the group $H_1$. In Example \ref{h1noth2}, we have $H_1=\mathbb Z_k$, which arises entirely from the induced action on $H_2(M, \mathbb Z)$, while the other terms vanish.

\subsection{Fixed point sets of symplectic cyclic actions} \label{fixedset}
In this subsection, we analyze the fixed point sets of symplectic cyclic actions. Our first result concerns a $\mathbb Z_k$-action with $k>2$, which will be used in the construction of the circle action in Section \ref{circle}.
\begin{prop} \label{sections}
Let $(M,\omega)$ be an irrational ruled symplectic $4$-manifold whose base surface has genus $g 
\geq 2$. Suppose $\varphi$ is a symplectomorphism of  order $k>2$, generating an effective homologically trivial $\mathbb Z_k$-action on $M$. Let $J$ be a $\mathbb Z_k$-invariant $\omega$-compatible almost complex structure, and let $f: M\rightarrow \Sigma$ be the associated the moduli map. Then:

\begin{enumerate} \item The fixed point set $M^{\mathbb{Z}_k}$ contains two disjoint sections with respect to the fibration $f: M \rightarrow \Sigma$.

\item The map $\varphi$ preserves each irreducible component of the fiber $f^{-1}(p)$ for all $p \in \Sigma$.
\end{enumerate}
\end{prop}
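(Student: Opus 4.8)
The plan is to run the entire analysis fiberwise, using Proposition \ref{bundle}(a) to reduce everything to the behaviour of a cyclic rotation on the individual $J$-holomorphic spheres, and then to globalise. Since $g(\Sigma)>1$, Proposition \ref{bundle}(a) tells us that $\varphi$ preserves every fiber $f^{-1}(p)$, induces the identity on $\Sigma$, and acts effectively on each fiber. First I would examine a generic fiber: for all but finitely many $p$, Theorem \ref{fibration}(b) says $f^{-1}(p)$ is a smooth $J$-holomorphic sphere, and $\varphi$ restricts to an effective holomorphic $\mathbb{Z}_k$-action on $\mathbb{CP}^1$. Every finite cyclic subgroup of $\mathrm{Aut}(\mathbb{CP}^1)$ is conjugate to a rotation $z\mapsto e^{2\pi i m/k}z$ with $\gcd(m,k)=1$, so the action has exactly two fixed points with fiber-tangent weights $+m$ and $-m$. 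The arithmetic point, and the only place $k>2$ is used, is that $+m\not\equiv -m\pmod k$ precisely when $k\nmid 2$; thus for $k>2$ the two poles carry distinct weights.

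To prove (1) I would promote these fiberwise poles to global sections. Because $\varphi$ acts trivially on the base, at a fixed point $x$ lying on a smooth fiber the equivariant submersion $df_x\colon T_xM\to T_{f(x)}\Sigma$ splits $T_xM$ as the fiber tangent line (carrying the rotation weight) plus a trivial base summand; hence exactly one weight vanishes and $x$ lies on a $2$-dimensional fixed component transverse to the fiber. By the constancy of weights along fixed components (Section \ref{weightsprel}), the weight-$(+m)$ poles and the weight-$(-m)$ poles lie on different components of $M^{\mathbb{Z}_k}$. Writing $\hat S_+$ for the union of those fixed components of fiber-weight $+m$ that meet a generic fiber, positivity of intersections gives $\hat S_+\cdot F\ge 1$, while the generic fiber contains exactly one weight-$(+m)$ pole; hence there is a single such component with $\hat S_+\cdot F=1$. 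Positivity then forces $\hat S_+$ to meet every fiber in exactly one transverse point, so $f|_{\hat S_+}$ is a bijection and $\hat S_+$ is a section; the same argument yields $\hat S_-$. They are disjoint, since a common point would be a smooth fiber point with fiber-weight simultaneously $+m$ and $-m$.

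For (2) the only content is at the finitely many reducible fibers, as $\varphi$ preserves each irreducible fiber trivially. Here I would argue homologically. On a reducible fiber $f^{-1}(p)=C_1\cup\dots\cup C_r$, which by \cite[Theorem 1.5]{LZ} is a tree of spheres summing to the class $F$, each component satisfies $C_i\cdot F=0$ (being disjoint from a generic fiber), so expanding $[F]=\sum_j[C_j]$ gives $C_i^2=-\deg(v_i)\le -1$, where $\deg(v_i)$ is the valence of $C_i$ in the tree. If two distinct components were homologous, $[C_i]=[C_j]$, then $C_i^2=[C_i]\cdot[C_j]=C_i\cdot C_j\ge 0$ by positivity of intersections, contradicting $C_i^2<0$. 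Hence distinct components of a fiber carry distinct homology classes. Since $\varphi$ is homologically trivial, $\varphi(C_i)$ is a component of the same fiber with $[\varphi(C_i)]=[C_i]$, and by uniqueness $\varphi(C_i)=C_i$, proving (2).

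The main obstacle is the globalisation in (1): passing from the fiberwise statement ``two fixed points on each generic fiber'' to honest global sections defined over all of $\Sigma$, including the reducible fibers. My plan handles this through the weight bookkeeping together with positivity of intersections --- weight constancy separates the poles into two disjoint families, and the intersection count $\hat S_\pm\cdot F=1$ upgrades each family to a smooth section meeting every fiber exactly once. I expect the delicate points to be verifying that the relevant fixed components are genuinely two-dimensional (which follows from the vanishing of the base-direction weight) and that no extra intersection with a reducible fiber occurs (which is exactly the positivity bound $\hat S_\pm\cdot F=1$). It is worth emphasising that this mechanism breaks down for $k=2$, where both poles share the weight $1\equiv -1\pmod 2$ and may lie on a single connected fixed component --- precisely the source of the connected fixed sets exploited in Theorem \ref{exampleir}.
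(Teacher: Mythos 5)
Your proposal is correct and follows essentially the same route as the paper's proof: an effective elliptic rotation on each smooth fiber with tangent weights $\pm a$, the observation that $a\not\equiv -a \pmod k$ for $k>2$, constancy of weights along fixed components to separate the two poles into distinct two-dimensional components, and positivity/transversality to get intersection number $1$ with the fiber and hence sections; part (2) is the same positivity-of-intersections argument using $S\cdot S<0$ and homological triviality, merely repackaged as ``distinct components carry distinct classes.'' The only additions are expository (the explicit weight-$0$ base summand and the computation $C_i^2=-\deg(v_i)$), which the paper asserts more briefly via Theorem \ref{fibration}.
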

\begin{proof}

By Proposition \ref{bundle},   the moduli map $f: M \rightarrow \Sigma$ is $\mathbb Z_k$-equivariant, and each fiber is preserved by the $\mathbb{Z}_k$-action. 

Let $F = f^{-1}(p) \cong S^2$ be a smooth fiber. Since the almost complex structure $J$ is integrable on $F$, the induced action on the fiber is by a M\"{o}bius transformation of finite order, in particular, an elliptic transformation. Such a transformation fixes exactly two points, with weights $a \mod k$ and $-a \mod k$, where $\mathrm{gcd}(a,k)=1$ because the action is effective.  Since $k>2$, we must have $a \neq -a \mod k$, as otherwise $2a = 0 \mod k$,  contradicting the condition gcd$(a,k)=1$.

Therefore, the fixed point set $M^{\mathbb{Z}_k}$ intersects each smooth fiber in exactly two distinct points. Let $C\subset M^{\mathbb{Z}_k}$ be an irreducible component of the fixed point set that intersect the smooth fiber $F$. Then the intersection number $  C\cdot F =\int_{\Sigma} f_{*}([C]) > 0$.
By Lemma \ref{inv}, $C$ is a smooth $J$-holomorphic curve. The curves $C$ and $F$ intersect transversely because the fixed subspace of the tangent space at a fixed point is disjoint from the tangent space of the fiber away from the origin, as the action is non-trivial on $F$.  Therefore, $C\cdot F \leq 2$.

We now show that $C\cdot F =1$.  Each fixed point on $F$ lies in a connected component of the fixed point set $M^{\mathbb{Z}_k}$ of dimension $2$. The normal bundle to the fiber at a fixed point is the tangent space $T_{p}F$, and the $\mathbb Z_k$-action on the normal bundle has weight $w \in \mathbb{Z}_k$, constant along connected components. Since the weights at the two fixed points on $F$ are distinct, they must lie in distinct connected components of $M^{\mathbb{Z}_k}$. 
Hence, the two fixed points belong to two different $2$-dimensional components $\Sigma_1,\Sigma_{2} \subset M^{\mathbb{Z}_k}$, each intersecting $F$ transversely at a single point. Therefore both $\Sigma_1$ and $\Sigma_2$ are sections of the fibration,  proving part (1).

Now let $S$ be an irreducible component of a singular fiber. By Theorem \ref{fibration}, such components are $J$-holomorphic spheres with self-intersection $S \cdot S < 0$. Suppose,  for contradiction,  that $\varphi(S) \neq S$. Since $\varphi$ acts trivially on $H_{2}(M,\mathbb{Z})$,  we have $[\varphi(S)]=[S]$. Then by positivity of intersections $$ S \cdot S = \varphi(S) \cdot S \geq 0, $$  contradicting $S\cdot S<0$. Hence $\varphi(S)=S$, and each irreducible component of every fiber is preserved, proving part (2).
\end{proof}

Upon reviewing the proof of Proposition \ref{sections}, the reader will notice that the we only used the assumption that the action is trivial on $H_{1}$ and $H_{2}$. In the present context,  this is equivalent to the action being homologically trivial.

Let $N$ be an orientable, closed $4$-manifold with torsion-free homology, and let $h: N \rightarrow N$ be an orientation-preserving homeomorphism acting trivially on $H_{1}(N,\mathbb{Z})$. Let $$\mathrm{PD}: H^{3}(N,\mathbb{Z}) \rightarrow  H_{1}(N,\mathbb{Z})$$ denote the Poincar\'{e} duality isomorphism.  By the naturality of Poincar\'{e} duality for orientation-preserving homeomorphisms, we have $$ \mathrm{PD} = h_{*}\circ \mathrm{PD}\circ h^{*},$$ which implies that if $h_*$ acts trivially on $H_1(N, \mathbb Z)$,  then $h^*$  acts trivially on $H^{3}(N,\mathbb{Z})$. Since $N$ has torsion-free homology, we have $$H^{3}(N,\mathbb{Z}) \cong \mathrm{Hom}(H_{3}(N,\mathbb{Z}),\mathbb{Z}),$$ and therefore $h$ must act trivially on $H_{3}(N,\mathbb{Z})$. This proves the following:

\begin{remark} \label{h12triv}
Let $G$ be a group acting by orientation-preserving homeomorphisms on an orientable, closed $4$-manifold $N$ with torsion-free homology (in particular, any ruled symplectic $4$-manifold). Then the induced action of $G$ is homologically trivial if and only if it acts trivially on $H_{1}(N,\mathbb{Z})$ and $H_{2}(N,\mathbb{Z})$. 
\end{remark}

We now turn to the case of involutions. The following amendment of Proposition \ref{sections} holds for $k=2$, where an additional possibility arises:  unlike the case $k>2$, the fixed point set may be connected. In Theorem \ref{exampleir}, we will construct an example where this occurs, providing a key ingredient for a counterexample to Question \ref{kedra} in the case of involutions on minimal irrational ruled $4$-manifolds. 

 \begin{prop}\label{z2fix}
Let $(M,\omega)$ be an irrational ruled symplectic $4$-manifold with $b_2=2$, with base genus $g \geq 2$. Suppose $\tau$ is a non-trivial symplectic involution acting trivially on $H_{1}(M,\mathbb{Z})$. Then the fixed point set $M^{\tau}$ is  either: \begin{enumerate}
\item A connected bisection of genus $2g-1$ with self-intersection $0$, intersecting all fibers transversely, or
\item Two disjoint sections of genus $g$, whose self-intersection numbers sum to $0$. 
\end{enumerate} 
\end{prop}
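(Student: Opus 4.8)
The plan is to first understand how $\tau$ acts on the $S^2$-fibration supplied by Theorem \ref{fibration}, then to show that the fixed set is an \emph{unbranched} double cover of $\Sigma$, and finally to read off the genus and self-intersection. First I would record the homological input: since $b_2(M)=2$, the involution preserves the fiber class $[F]$, and writing $\tau_*$ on the rank-two lattice $H_2(M,\mathbb Z)=\langle F, S_0\rangle$ one sees $\tau_* S_0 = S_0 + cF$ with $2c=0$, so $\tau$ acts trivially on $H_2$ and is in fact homologically trivial (Remark \ref{h12triv}). Next I choose a $\tau$-invariant $\omega$-compatible $J$ (Lemma \ref{inv}), so that $M^{\tau}=\mathrm{Fix}(\tau)$ is a $J$-holomorphic submanifold, and I invoke Theorem \ref{fibration}(c): here $f:M\to\Sigma$ is a genuine smooth $S^2$-bundle all of whose fibers are smooth $J$-holomorphic spheres. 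As in the proof of Proposition \ref{bundle}, $\tau$ preserves $[F]$ and hence maps fibers to fibers; the induced map on $\Sigma$ is trivial on $H_1(\Sigma,\mathbb Z)$, so by Theorem \ref{periodic} (this is where $g\geq 2$ enters) it is $\mathrm{Id}_\Sigma$. Thus $\tau$ preserves each fiber and restricts to a finite-order biholomorphism of $\mathbb{P}^1$, i.e. either the identity or an elliptic involution with exactly two distinct fixed points.

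The key local step is to rule out that $\tau$ fixes a whole fiber pointwise. In a bundle chart $U\times\mathbb{P}^1$ around such a fiber $f^{-1}(p_0)$, the fiber-preserving property gives $\tau(w,z)=(w,\tau_w(z))$ with $\tau_{0}=\mathrm{Id}$, so along $f^{-1}(p_0)$ the differential $d\tau$ is unipotent; the relation $(d\tau)^2=\mathrm{Id}$ then forces $d\tau=\mathrm{Id}$ there, and Bochner linearization makes $\tau$ the identity near $f^{-1}(p_0)$, hence on all of $M$ — contradicting $\tau\neq\mathrm{Id}$. Therefore $\tau|_{f^{-1}(p)}$ is a nontrivial elliptic involution for every $p$, so $M^{\tau}$ meets each fiber in exactly two distinct points; since $\tau$ acts as $-1$ on the fiber tangent space at such a point, the intersection is transverse and there are no isolated fixed points. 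Consequently $M^{\tau}$ is a smooth closed surface, automatically oriented as a $J$-holomorphic curve, and $f|_{M^{\tau}}:M^{\tau}\to\Sigma$ is a proper local diffeomorphism of degree two, that is, an unbranched double cover.

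The dichotomy in the statement is then just connectedness of this cover. If $M^{\tau}$ is connected it is a connected double cover of $\Sigma_g$, so $\chi(M^{\tau})=2\chi(\Sigma_g)$ gives genus $2g-1$, and it is a bisection meeting every fiber transversely — this is case (1). If $M^{\tau}$ is disconnected, each component is a degree-one cover of $\Sigma$, hence a section of genus $g$, and there are exactly two disjoint such sections — case (2). For the self-intersections I would use that $\tau^{*}$ is trivial on $H^{2}(M,\mathbb R)$ and that $\mathrm{sign}(M)=0$ (the intersection form has rank $2$ and is indefinite): the Atiyah--Singer $G$-signature theorem for an involution with purely $2$-dimensional fixed set yields $[M^{\tau}]^{2}=\mathrm{sign}(\tau,M)=\mathrm{sign}(M)=0$. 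In case (1) this is precisely $\Gamma^{2}=0$, and in case (2), since the two sections are disjoint, $[M^{\tau}]^{2}=\Gamma_1^{2}+\Gamma_2^{2}$, so the two self-intersections sum to $0$. (Alternatively, in case (1) one may feed the genus $2g-1$ and $[M^{\tau}]\cdot F=2$ into the adjunction formula for the embedded $J$-holomorphic curve $M^{\tau}$ and solve for $\Gamma^{2}=0$.)

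The hard part will be the local rigidity in the second paragraph: showing that no fiber is fixed pointwise and, with it, that the fixed locus has no isolated points and the double cover is unbranched. This is exactly the delicate point that distinguishes involutions from the $k>2$ case. For $k>2$ the two fiber fixed points carry \emph{distinct} normal weights and therefore necessarily lie on different components (the argument of Proposition \ref{sections}), forcing two disjoint sections; for $k=2$ the weights $1$ and $-1$ coincide modulo $2$, so nothing prevents the two points from lying on a single connected component, which is precisely what makes the connected bisection of case (1) possible.
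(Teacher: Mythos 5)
Your proposal is correct and follows essentially the same route as the paper's proof: choose a $\tau$-invariant $J$, show $\tau$ acts fiberwise as an elliptic involution giving exactly two transverse fixed points per fiber, run the connectedness dichotomy with the Euler characteristic computation for the genera, and invoke the signature theorem of J\"annich--Ossa for $[M^{\tau}]^2=\sigma(M)=0$. The only real divergence is in one local step — you rule out a pointwise-fixed fiber via Bochner linearization, whereas the paper (through Proposition \ref{bundle}) notes that $\mathrm{Fix}(\tau)$ is an almost complex submanifold containing a curve meeting the fiber positively and hence cannot contain a fiber; both arguments work, and your explicit verification that $b_2=2$ forces $\tau_*=\mathrm{Id}$ on $H_2$ is a detail the paper leaves implicit but which is genuinely needed to equate $\mathrm{sign}(\tau,M)$ with $\sigma(M)$.
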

\begin{proof}
As in Proposition \ref{bundle}, the involution $\tau$ preserves the fibration structure and acts fiberwise. Since $M^{\tau}$ intersects each fiber in two points, it forms an almost complex submanifold that intersects fibers transversely in two points, so $$M^{\tau}\cdot F=2 .$$ In particular,  $M^{\tau}$ cannot contain a fiber as an irreducible component.  
Decomposing $M^{\tau}$ into connected components, it follows that it must consist of either one or two components, which is a bisection or two  disjoint sections.

The projection of each component to the base surface $\Sigma$ is either a homeomorphism (in the case of a section) or a two-to-one covering (in the case of a bisection). This gives the claimed genera:
\begin{itemize}
\item For a section: genus $g$,
\item For a bisection: genus $2g-1$.
\end{itemize}

Since $M$ is an $S^2$-bundle over a surface, the signature $\sigma(M) = 0 $.  By the main result of \cite{JO}, the self-intersection of the fixed point set satisfies: 

$$M^{\tau}\cdot M^{\tau} =  \sigma(M) = 0, $$ which establishes the claims about the self-intersections.
\end{proof}

From the above argument, if the condition $b_2=2$ is removed, the same topological classification of $M^{\tau}$ holds, though the self-intersection calculation no longer applies.

\section{Exotic symplectic involutions on irrational ruled symplectic $4$-manifolds}
\label{exoticsymplecticinv}

It is well known that the fixed point set of a homologically trivial symplectic circle action on a minimal irrational ruled symplectic $4$-manifold with base genus at least two consists of a disjoint union of two sections. The same conclusion holds for a homologically trivial $\mathbb Z_k$-action when $k>2$, as shown in Proposition \ref{sections}. However, we will now demonstrate that this structure breaks down in the case of a symplectic involution ({\it i.e.} $k=2$). 
\begin{theorem} \label{exampleir}
For any compact orientable surface $S$ of genus at least one, there exists:
\begin{itemize}
\item An involution $q$ of $S \times S^2$, 
\item An invariant symplectic form $\omega_0$,
\end{itemize}
such that the fixed point set of $q$ is a connected, smooth bisection. The symplectic form can be chosen to have cohomology class $$[\omega_0] = \mathrm{PD}([S]) + K \cdot \mathrm{PD}([S^2]),$$ for $K\gg 0$,  where $\mathrm{PD}$ denotes Poincar\'{e} dual.
Moreover, the $\mathbb{Z}_2$-action generated by $h$ is homologically trivial and induces the trivial action on $\pi_1$. 
\end{theorem}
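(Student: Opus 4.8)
The plan is to realize $q$ as a fibrewise rotation of $S\times S^2$ whose rotation axis varies over $S$ according to a suitably chosen map into $\mathbb{R}P^2$, and to produce the invariant form by averaging a product form, exploiting that every rotation of the round sphere is an isometry. Concretely, I identify the set of order-two elements of $\mathrm{SO}(3)$ with $\mathbb{R}P^2$: to $[u]\in\mathbb{R}P^2$ (an unoriented axis, $u\in S^2$) I associate the rotation $R_{[u]}$ by angle $\pi$ about $u$, giving a smooth embedding $\mathbb{R}P^2\hookrightarrow\mathrm{SO}(3)$ with $\mathrm{Fix}(R_{[u]})=\{u,-u\}$. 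For a smooth map $\psi\colon S\to\mathbb{R}P^2$ I then define
\[
q(x,v)=\bigl(x,\,R_{\psi(x)}(v)\bigr),\qquad (x,v)\in S\times S^2,
\]
which is a smooth involution covering $\mathrm{id}_S$.

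Since $R_{\psi(x)}$ fixes exactly the two antipodal points $\pm u$ with $[u]=\psi(x)$, the locus $\mathrm{Fix}(q)$ is precisely the pullback under $\psi$ of the double cover $S^2\to\mathbb{R}P^2$; it is connected exactly when $\psi_*\colon\pi_1(S)\to\pi_1(\mathbb{R}P^2)=\mathbb{Z}_2$ is onto, i.e.\ when $\psi^*w_1\neq 0$ in $H^1(S;\mathbb{Z}_2)$. For $g\ge 1$ this group is nonzero, and because $S$ is $2$-dimensional every class in $H^1(S;\mathbb{Z}_2)=[S,\mathbb{R}P^\infty]$ is represented by a map into the $2$-skeleton $\mathbb{R}P^2$, so I choose $\psi$ realizing a nonzero class. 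The resulting $\mathrm{Fix}(q)$ is a connected unbranched double cover of $S$, hence a smooth bisection of genus $2g-1$ meeting each fibre transversally in two points, as in Proposition \ref{z2fix}(1). Moreover $q$ projects to $\mathrm{id}_S$ and acts fibrewise by orientation-preserving diffeomorphisms, so it fixes $[S\times\mathrm{pt}]$ and $[\mathrm{pt}\times S^2]$ up to homology and acts as the identity on $H_1(S\times S^2)=H_1(S)$; as the fibre is simply connected, $q_*=\mathrm{id}$ on $\pi_1$ as well.

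For the invariant form, let $\omega_{\mathrm{FS}}$ be the round area form on $S^2$ of total area $1$ and $\alpha$ an area form on $S$ of total area $K$. Starting from the product form $\omega^{\mathrm{pr}}=\pi_S^*\alpha+\pi_{S^2}^*\omega_{\mathrm{FS}}$, I set $\omega_0=\tfrac12(\omega^{\mathrm{pr}}+q^*\omega^{\mathrm{pr}})$, which is closed and $q$-invariant; since $q$ is homologically trivial, $[\omega_0]=[\omega^{\mathrm{pr}}]=\mathrm{PD}([S])+K\cdot\mathrm{PD}([S^2])$. As $\pi_S^*\alpha$ is already $q$-invariant, $\omega_0=\pi_S^*\alpha+\eta$ with $\eta=\tfrac12(\pi_{S^2}^*\omega_{\mathrm{FS}}+q^*\pi_{S^2}^*\omega_{\mathrm{FS}})$, and because each $R_{\psi(x)}$ is an isometry of the round metric, $\eta$ restricts on every fibre $\{x\}\times S^2$ to $\omega_{\mathrm{FS}}$, a positive area form. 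Writing $\alpha=K\alpha_1$ with $\alpha_1$ of unit area, one computes $\omega_0^2=\eta^2+2K\,\eta\wedge\pi_S^*\alpha_1$; here $\eta\wedge\pi_S^*\alpha_1>0$ pointwise since $\eta$ is fibrewise positive and $\pi_S^*\alpha_1$ is positive in the base directions, so a Thurston/Gompf argument gives $\omega_0^2>0$ for $K\gg 0$.

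I expect the genuinely delicate point to be the last step, where invariance, the prescribed cohomology class, and nondegeneracy must hold simultaneously. Averaging secures the first two for free, but nondegeneracy is not automatic for an average of symplectic forms; it is rescued precisely by the isometric nature of the fibre rotations, which forces $\eta$ to remain fibrewise positive, combined with taking the base area $K$ large. The topological input of choosing $\psi$ to hit a nonzero class in $H^1(S;\mathbb{Z}_2)$ is what separates this connected case from the split behaviour of Proposition \ref{sections}, and is the source of the phenomenon the theorem is designed to exhibit.
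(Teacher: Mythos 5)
Your proposal is correct and is essentially the paper's construction: a fibrewise rotation by $\pi$ about a varying pair of antipodal points, with connectedness of the fixed locus forced by making the family of axes topologically nontrivial, and the invariant symplectic form obtained by averaging the product form and adding a large multiple of a pulled-back area form (the paper likewise uses that $q$ acts symplectically on each fibre so the averaged form stays fibrewise nondegenerate). The only repackaging is that the paper first builds the bisection as the image of $(\Pi,R):\tilde{S}\to S\times S^2$ for a double cover $\tilde{S}\to S$ with an equivariant retraction $R$ onto an invariant circle and then rotates about it, whereas you first choose the axis map $\psi:S\to\mathbb{R}P^2$ with $\psi^*w_1\neq 0$ and read off the bisection as the pullback of $S^2\to\mathbb{R}P^2$; the one line you should add is why $q_*[S\times\mathrm{pt}]=[S\times\mathrm{pt}]$ (write $q_*[S\times\mathrm{pt}]=[S\times\mathrm{pt}]+k[S^2]$ and note $k=0$ since $q$ is an involution, or since the preserved classes $[\mathrm{Fix}(q)]$ and $[S^2]$ already span $H_2(S\times S^2,\mathbb{R})$, which is how the paper closes this step).
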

\begin{proof}

Let $S$ be a compact orientable surface of genus at least one. Then $S$ admits a double cover $\tilde{S}\rightarrow S$, with covering involution $\tau$.  There exists an embedded circle $C \subset \tilde{S}$ invariant under $\tau$, and a $\tau$-equivariant retraction $R: \tilde{S} \rightarrow C$  onto $C$, where $\tau$ acts on $C\cong S^1$ by rotation through angle $\pi$.

This setup can be realized geometrically by embedding $\tilde{S}$ into $\mathbb R^3$ so that $\tau$ acts as rotation about the $z$-axis with angle $\pi$. Let $C$ be a circle in the horizontal plane $\{z=0\}\subset \mathbb R^3$,  centered at $(0,0,0)$,  with radius $1$ (see Figure \ref{example}). Then the retraction $$R: \mathbb{R}^3 \setminus L \rightarrow C\cong S^1, \quad R(x,y,z) = (\frac{x}{|(x,y)|},\frac{y}{|(x,y)|}, 0),$$
restricts to a $\tau$-equivariant retraction map $R|_{\tilde{S}}: \tilde{S}\rightarrow C$. 

\begin{figure}[h] 
\begin{tikzpicture}[scale=0.4]

\draw[gray, thick] (-10,0)  edge[in=90,out=90] (10,0);

\draw[gray, thick] (10,0)  edge[in=-90,out=-90] (-10,0);

\draw[thick,blue] (-1.1,-0.2)  edge[in=90,out=90] (1.1,-0.2);

\draw[thick, dashed,blue] (1.1,-0.2)  edge[in=-90,out=-90] (-1.1,-0.2);

\draw[ thick,->] (0,-0.6) -- (0,8);

\node at (-1.2,1) {$C$};

\node at (-7,5) {$\tilde{S}$};

\node at (2.2,7) {$L$=$z$-axis};

\draw[ thick, dashed] (0,-5.8) -- (0,-0.6);

\draw[ thick] (0,-6) -- (0,-7);

\draw (-2,0.5) .. controls (0,-1) .. (2,0.5);

\draw (-0.75,-0.35) .. controls (0,0.15) .. (0.75,-0.35);

\draw (-8,0.5) .. controls (-6,-1) .. (-4,0.5);

\draw (-6.75,-0.35) .. controls (-6,0.15) .. (-5.25,-0.35);

\draw (8,0.5) .. controls (6,-1) .. (4,0.5);

\draw (6.75,-0.35) .. controls (6,0.15) .. (5.25,-0.35);
\end{tikzpicture}
\caption{The involution on $\tilde{S}$ and the invariant circle $C$.}\label{example}
\end{figure}

%Let $i: S^1 \rightarrow S^2$ be the standard equatorial embedding: $$i(x,y) = (x,y,0).$$ 

Define a map $$\mu: \tilde{S} \rightarrow S \times S^2, \quad \mu(X) = (\Pi(X), R(X)),$$where $\Pi : \tilde{S} \rightarrow S$ is the covering map.

The map $\mu$ is an embedding, for suppose $\Pi(X) = \Pi(Y)$ for $X \neq Y$, then $X,Y$ are in the same orbit of the $\tau$ hence $R(X) \neq R(Y)$, by the equivariance property of $R$. Set $$S_{0} := \mu (\tilde{S}) \subset S \times S^2 .$$ Then $S_0$ is a smooth embedded bisection intersecting each $S^2$-fiber in two antipodal points. 

Define an involution $q\in \mathrm{Diff}(S\times S^2)$ such that, for each fiber $F\cong S^2$, $q|_{F}$ is the M\"obius transformation rotating by angle $\pi$ about the two points $S_{0} \cap F$. This $q$ is a smooth involution of $S \times S^2$ with the fixed point set equal to $S_{0}$.

To construct a symplectic form invariant under $q$, start with the product symplectic form $\omega$ on $S \times S^2$, and define: $$\alpha = \frac{1}{2}(\omega + q^{*}\omega).$$
Then $\alpha$ is closed and $\mathbb{Z}_{2}$-invariant. Moreover, since $q$ acts symplectically on each fiber, $\alpha|_{F} = \omega|_{F}$ for every fiber $F\cong S^2$. 

Let $\beta$ be the pullback of a symplectic form on $S$ via the projection $\pi_1: S \times S^2\rightarrow S$. It is closed.
Then, for any $K>0$, the form $$\omega_0:=\alpha + K \beta ,$$ is closed, $\mathbb{Z}_2$-invariant, and non-degenerate for sufficiently large $K$. Thus $\omega_0$ is a $\mathbb Z_2$-invariant symplectic form on $S\times S^2$.

Since $q$ preserves a symplectic form, it is orientation-preserving.  The homology classes $[S_{0}]$ and $[F]$ are both preserved under the action of $\mathbb Z_2$, and together they span $H_{2}(S\times S^2, \mathbb R)$. Therefore, the $\mathbb Z_2$-action is trivial on $H_{2}$. 

Now consider the projection $\pi_{1} : S \times S^2 \rightarrow S$. Recall that  $\pi_{1} \circ \mu = \Pi: \tilde S\rightarrow S$. Hence, the induced map $$\mu_{*} : H_{1}(\tilde{S},\mathbb{R}) \rightarrow H_{1}(S,\mathbb{R})$$ is surjective. Since $S_0=\mu(\tilde S)$ is fixed by $q$, and $H_1(S_0, \mathbb R)$ surjects onto $H_1(S, \mathbb R)\cong H_1(S\times S^2, \mathbb R)$, it follows that the involution $q$ acts trivially on $H_{1}(S\times S^2, \mathbb R)$.  By Remark \ref{h12triv}, $q$ is homologically trivial.

Finally, to see that the action on the fundamental group is trivial, let $\{\alpha_i\}$  be a standard  generating set of $\pi_{1}(S)$, represented by $2g$ simple closed curves. Then the loops  $\alpha_i \times \{p\}$ generate $\pi_{1}(S \times S^2)$; we denote the corresponding classes by $[\alpha_i]$.

For each $i$, define the submanifold $N_i :=  \alpha_i \times S^2\subset S\times S^2$. Each $N_i$ is invariant under the involution $q$,  and the inclusion map $i_{*} : \pi_{1}(N_{i}) \rightarrow \pi_{1}(S\times S^2)$ is $\mathbb{Z}_2$-equivariant. The fixed point set of the restricted action on $N_{i}$ is a smooth bisection of the projection to $\alpha_i$. Therefore, the $\mathbb{Z}_2$-action on $\pi_{1}(N_{i})\cong  \mathbb{Z}$ fixes the subgroup $2\mathbb{Z} \subset \mathbb{Z} $, and hence must fix all of $\pi_{1}(N_{i})$. By equivariance of $i_{*}$, this implies that $q$ fixes $[\alpha_i]$. Since the $[\alpha_i]$ generate $\pi_{1}(S \times S^2)$, the $\mathbb Z_2$-action is trivial on the fundamental group. 
\end{proof}

The example constructed in Theorem \ref{exampleir}  provides a negative answer to Question \ref{kedra'} for irrational ruled $4$-manifolds with base genus at least two and any $b_2\ge 2$. 
\begin{cor}\label{invtocircle}
There exist homologically trivial symplectic involutions on irrational ruled $4$-manifolds with any base genus $g\ge 2$ and any $b_2\ge 2$, which cannot be extended to symplectic circle actions, even after possibly modifying the symplectic form. 
\end{cor}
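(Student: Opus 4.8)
The plan is to start from the involution $q$ and invariant symplectic form $\omega_0$ on $S\times S^2$ produced by Theorem \ref{exampleir}, whose fixed point set is a single connected bisection $S_0$ of genus $2g-1$, and to show that the \emph{connectedness} of this fixed set is incompatible with $q$ being the order-two element of any symplectic circle action. The conceptual input is that a symplectic circle action on an irrational ruled surface of base genus $\ge 2$ must rotate the ruling fiberwise, so its fixed locus splits off two \emph{disjoint sections}; a connected bisection contains no such section. Note that the fixed point set $\mathrm{Fix}(q)$ is intrinsic to the diffeomorphism $q$ and is therefore insensitive to which symplectic form one chooses.

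Concretely, I would suppose for contradiction that there is a symplectic form $\omega'$ on the given manifold $M$ and a circle action $\rho\colon S^1\to\mathrm{Symp}(M,\omega')$ with $\rho(e^{i\pi})=q$. Since the underlying manifold is irrational ruled, $(M,\omega')$ is again an irrational ruled symplectic $4$-manifold, so Theorem \ref{fibration} applies to $\omega'$. Because $\rho$ is nontrivial ($q\neq\mathrm{Id}$) and $S^1$ is connected, its image is an effective, homologically trivial circle action. I fix $k=3$ and set $\sigma:=\rho(e^{2\pi i/3})$, an effective homologically trivial $\mathbb Z_3$-action. Averaging over $S^1$ yields a $\rho$-invariant $\omega'$-compatible $J$, and Proposition \ref{sections} applied to $\sigma$ produces two disjoint sections $\Sigma_1,\Sigma_2\subset\mathrm{Fix}(\sigma)$ of the moduli map, with $\Sigma_i\cdot F=1$. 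Since the base has genus $\ge 2$, every finite-order element of $\rho(S^1)$ acts trivially on the base by Theorem \ref{periodic} (via Lemma \ref{modfirsthom}), hence by density and continuity the whole circle acts fiberwise, rotating each generic fiber about a common pair of poles. Thus $\Sigma_1$ and $\Sigma_2$ are fixed by all of $S^1$, so $\Sigma_1\sqcup\Sigma_2\subseteq\mathrm{Fix}(S^1)\subseteq\mathrm{Fix}(q)$.

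The contradiction then comes from the shape of $\mathrm{Fix}(q)$: its unique $2$-dimensional component is the connected bisection $S_0$, with $S_0\cdot F=2$. Each $\Sigma_i$ is a closed $2$-dimensional submanifold contained in the connected $2$-dimensional set $S_0$, so by invariance of domain $\Sigma_i$ is open and closed in $S_0$, forcing $\Sigma_i=S_0$; but then $1=\Sigma_i\cdot F=S_0\cdot F=2$, a contradiction. (Equivalently, a connected genus $2g-1$ surface is not a disjoint union of two genus $g$ sections.) This settles the minimal case $b_2=2$. For arbitrary $b_2\ge 2$ I would realize the examples by equivariant symplectic blow-ups of $(S\times S^2,\omega_0,q)$: blowing up along a free orbit $\{p,q(p)\}$ with $p\notin S_0$ raises $b_2$ by $2$ and leaves $\mathrm{Fix}$ equal to $S_0$ (the two exceptional spheres are interchanged), covering all even $b_2$; blowing up at a point of $S_0$, where $dq=\mathrm{diag}(1,-1)$ in an invariant complex frame, raises $b_2$ by $1$ and replaces $S_0$ by its proper transform together with one isolated fixed point on the exceptional divisor, covering all odd $b_2$. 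In every case $\mathrm{Fix}(\tilde q)$ again has a unique $2$-dimensional component, a connected bisection with fiber-intersection $2$, so the argument above applies verbatim.

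The main obstacle is the clause ``even after modifying the symplectic form'': one cannot fix the almost complex ruling in advance, so the whole argument hinges on the fact — guaranteed by Theorem \ref{fibration} for \emph{arbitrary} tamed $J$ — that any symplectic form on these manifolds admits an $S^1$-invariant $J$-holomorphic ruling in the fiber class, which forces the two-section structure of the circle's fixed locus independently of $\omega'$. The secondary bookkeeping obstacle is verifying that the equivariant blow-ups preserve the property that $\mathrm{Fix}(\tilde q)$ has a single $2$-dimensional component, a connected bisection, and creates no spurious fixed sections.
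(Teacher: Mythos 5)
Your proposal is correct and follows essentially the same route as the paper: both deduce that an extending circle action would be homologically trivial, invoke Proposition \ref{sections} to force two disjoint fixed sections (the paper asserts this directly for the circle action; you extract it via the order-$3$ element, which is a clean way to make the citation literal), and derive the contradiction from the connected bisection $\mathrm{Fix}(q)$, with equivariant blow-ups handling $b_2>2$. Your extra bookkeeping (the $\Sigma_i\cdot F=1$ versus $S_0\cdot F=2$ count and the explicit free-orbit/fixed-point blow-up models) only makes explicit what the paper leaves implicit.
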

\begin{proof}
For an effective symplectic circle action, if any non-trivial element in this action acts trivially on homology, then all non-trivial elements must also be homologically trivial, because the action on $H_i(M, \mathbb Z)$ is locally constant. Therefore, if such an extension exists,  we may assume that the full $S^1$-action is homologically trivial. 

However, it is known (see Proposition \ref{sections}) that any homologically trivial symplectic circle action on an irrational ruled $4$-manifold with base genus at least two must have two disjoint sections as its fixed point set. In Theorem \ref{exampleir},  we constructed an involution on the minimal irrational ruled $4$-manifold $(\Sigma_g\times S^2, \omega)$ with a connected bisection as the fixed point set, so the involution cannot extend to a homologically trivial symplectic circle action, with respect to any symplectic structure. 

To obtain examples with $b_2> 2$, we perform $\mathbb Z_2$-equivariant symplectic blowups of the above minimal example. This yields a fixed point set consisting of a bisection together with isolated points. But the fixed point set of a homologically trivial symplectic $S^1$-action must always contain two disjoint sections. Therefore, such involutions remain non-extendable to Hamiltonian or symplectic circle actions even after blow-ups. 
\end{proof}

We refer to these non-extendable  homologically trivial periodic symplectomorphisms as exotic. 
For rational or ruled $4$-manifolds, cohomologous symplectic forms are symplectomorphic, so our examples also yield exotic involutions on product symplectic manifolds of the form  $(\Sigma_g, \omega_g)\times (S^2, \omega_{\mathrm{FS}}),$ where $\omega_g$ is an area form with total area $K>0$. 

Moreover, our construction also provides exotic examples when the base genus is one. Consider the symplectic involution on  $T^2\times S^2$ constructed  in Theorem \ref{exampleir}, and perform an  equivariant blow-up to obtain irrational ruled $4$-manifolds with torus fixed point component and isolated points. On one hand, a Hamiltonian circle action on such a manifold must have at least two fixed components, each with fundamental group $\mathbb Z\oplus \mathbb Z$. On the other hand,  both components in this example are contained within a single torus, which contradicts to the required structure of the fixed point set.  Hence, the involution in this genus one case is not extendable to a Hamiltonian circle action.

The construction of Theorem \ref{exampleir} could be extended to higher dimensions. 

\begin{prop}\label{highex}
Let $G$ be a finitely presented group with an index $2$ subgroup $H\subset G$ and a surjective homomorphism $\phi: G \rightarrow \mathbb{Z}$ such that $\phi(H)=2\mathbb Z$. 
Then for any closed symplectic manifold $M$ with $\pi_{1}(M) \cong G$  and  $\dim(M) \geq 4$, let $\tilde{M}\rightarrow M$ be the double cover corresponding to the subgroup $H$. 

Then, there exists a symplectic involution on the product $M \times S^2$ whose fixed point set is diffeomorphic to $\tilde{M}$. 
Moreover, if the integral homology of $M$ is torsion-free, then the involution is homologically trivial. 
\end{prop}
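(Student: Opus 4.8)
The plan is to reduce everything to the two-dimensional construction of Theorem \ref{exampleir}, whose entire content was driven by a $\tau$-equivariant retraction $R\colon\tilde S\to S^1$ onto an invariant equatorial circle. So the first step is to manufacture the analogous equivariant map $R\colon\tilde M\to S^1$ purely from the group data $(G,H,\phi)$. I would regard $S^1=K(\mathbb Z,1)$ and realize $\phi\colon G\to\mathbb Z$ by a smooth map $\bar R\colon M\to S^1$ with $\bar R_*=\phi$ on $\pi_1$. Since $\phi$ is surjective, $\phi^{-1}(2\mathbb Z)$ has index two, and the hypotheses $[G:H]=2$ and $\phi(H)=2\mathbb Z$ force $H=\phi^{-1}(2\mathbb Z)$; hence the double cover $\tilde M\to M$ is precisely the pullback under $\bar R$ of the squaring cover $S^1\xrightarrow{z\mapsto z^2}S^1$. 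Consequently $\bar R$ lifts to a smooth map $R\colon\tilde M\to S^1$ equivariant for the deck involution $\tau$ on $\tilde M$ and the deck involution $a\colon z\mapsto -z$ (rotation by $\pi$) on $S^1$, that is $R\circ\tau=a\circ R$.

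With $R$ in hand I would fix the equator $S^1\subset S^2$ in the plane $z=0$ and copy the construction of Theorem \ref{exampleir}. Define $\mu\colon\tilde M\to M\times S^2$ by $\mu(x)=(\Pi(x),R(x))$, where $\Pi\colon\tilde M\to M$ is the covering projection: equivariance of $R$ makes $\mu$ injective, and since $\Pi$ is a local diffeomorphism $\mu$ is an embedding onto a smooth bisection $M_0\cong\tilde M$ meeting each fibre $\{p\}\times S^2$ in the antipodal pair $\{R(x),-R(x)\}$. I would then define $q$ fibrewise by $q(p,w)=(p,r_{v(p)}w)$, where $r_v\in\mathrm{SO}(3)$ is the rotation by $\pi$ about the axis $v$ and $v(p)=R(x)$ for any $x\in\Pi^{-1}(p)$; this is well defined and smooth because $r_{R(\tau x)}=r_{-R(x)}=r_{R(x)}$, so $x\mapsto r_{R(x)}$ descends to a smooth map $M\to\mathrm{SO}(3)$. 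Then $q$ is a smooth involution with $\mathrm{Fix}(q)=M_0\cong\tilde M$. To make $q$ symplectic, I would start from a product form $\omega=\pi_1^*\omega_M+\pi_2^*\omega_{S^2}$ with $\omega_{S^2}$ the round (hence $\mathrm{SO}(3)$-invariant) form, set $\alpha=\tfrac12(\omega+q^*\omega)$, which is closed, $q$-invariant, and restricts to $\omega_{S^2}$ on every fibre, and put $\omega_0=\alpha+K\pi_1^*\omega_M$; for $K\gg0$ this is nondegenerate, as $\alpha$ controls the fibre directions and $K\pi_1^*\omega_M$ the base directions. Note the dimension hypothesis $\dim M\ge 4$ plays no role in the construction itself, which works verbatim for $\dim M\ge 2$ and recovers Theorem \ref{exampleir}.

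The main obstacle is the final claim, homological triviality, because in the surface case it followed from Remark \ref{h12triv}, which is special to $4$-manifolds and unavailable here. Instead I would prove $q^*=\mathrm{id}$ on $H^*(M\times S^2;\mathbb Z)$ directly. By Künneth, $H^*(M\times S^2)\cong H^*(M)[\eta]/(\eta^2)$ with $\eta=\pi_2^*\eta_{S^2}$ of degree $2$. Since $\pi_1\circ q=\pi_1$, the map $q^*$ fixes $\pi_1^*H^*(M)$ pointwise, and because it acts as the identity on fibre cohomology it must send $\eta\mapsto\eta+\pi_1^*c$ for some $c\in H^2(M)$; the task is to show $c=0$. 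Restricting along the section $s_0(p)=(p,w_0)$ gives $c=(q\circ s_0)^*\eta=\psi^*\eta_{S^2}$, where $\psi(p)=r_{v(p)}w_0$. This is where the equatorial placement is decisive: all axes $v(p)$ lie on the equator, so choosing $w_0$ to be a pole gives $w_0\perp v(p)$ and hence $r_{v(p)}w_0=-w_0$, the opposite pole, for every $p$. Thus $\psi$ is constant, $c=\psi^*\eta_{S^2}=0$, and $q^*\eta=\eta$, whence $q^*=\mathrm{id}$ on all of cohomology. Finally, when $H_*(M)$ is torsion-free so is $H_*(M\times S^2)$ by Künneth, and $q_*=\mathrm{id}$ on integral homology follows by duality; that is, $q$ is homologically trivial.
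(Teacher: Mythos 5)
Your construction of the bisection, the fibrewise $\pi$-rotation, and the Thurston-type averaging $\alpha+K\pi_1^*\omega_M$ coincides with the paper's, but you diverge in two places, both to good effect. First, to produce the $\tau$-equivariant map $R:\tilde M\to S^1$, the paper picks $x\in G$ with $\phi(x)=1$, embeds a circle $C'\subset M$ representing it, builds a smooth retraction $M\to C'$ via the homotopy extension property, tubular neighbourhoods and Whitney approximation, and then lifts to the connected double cover $C=\pi^{-1}(C')$; you instead realize $\phi$ directly by a classifying map $\bar R:M\to S^1=K(\mathbb Z,1)$ and observe that $\tilde M$ is the pullback of the squaring cover, so the equivariant lift comes for free (the sign in $R\circ\tau=\pm R$ is forced to be $-$ by surjectivity of $\phi$, which you could spell out, but it is correct). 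This is cleaner, avoids the retraction machinery, and — as you note — makes no use of $\dim M\ge 4$ beyond its role in guaranteeing $M$ is a closed symplectic manifold. Second, for homological triviality the paper argues on homology: it cites Rong's lemma that $\pi_*:H_i(\tilde M)\to H_i(M)$ is surjective for a double cover to handle the classes $N\times\{p\}$ (which then live on the fixed bisection) and a chain-level argument for $N'\times S^2$, before transferring to integral homology via torsion-freeness. You instead compute $q^*$ on integral cohomology via the K\"unneth/Leray--Hirsch decomposition: $q^*$ fixes $\pi_1^*H^*(M)$ because $\pi_1\circ q=\pi_1$, and the possible correction term $c$ in $q^*\eta=\eta+\pi_1^*c$ vanishes because $q$ carries the polar section $M\times\{w_0\}$ to the constant opposite pole, so the relevant restriction map is null-homotopic. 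This is self-contained, exploits the equatorial placement of the axes in a genuinely new way, and in fact yields $q^*=\mathrm{id}$ on all of $H^*(M\times S^2;\mathbb Z)$ without any torsion hypothesis (torsion-freeness is then only needed to pass to homology, exactly as in the statement). Both proofs are correct; yours trades the paper's explicit cycle-by-cycle geometry for a shorter cohomological computation.
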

\begin{proof}
Since $H$ is a subgroup of index $2$, it is automatically normal. The corresponding covering map $\pi : \tilde{M} \rightarrow M$ is thus a normal double cover, and the deck transformation group $\mathrm{Deck}(\pi) \cong \mathbb{Z}_2$ acts freely and transitively on the fibers.  From now on, we denote this $\mathbb Z_2$-action on $\tilde M$ by $\tau$.

 %this point onward,  when we refer to the $\mathbb{Z}_2$-action on $\tilde{M}$, we mean the action of $\mathrm{Deck}(\pi)$.

We aim to construct a $\mathbb{Z}_2$-invariant circle $C \subset \tilde{M}$ and a $\mathbb{Z}_2$-equivariant retraction $R: \tilde{M} \rightarrow C$. The construction proceeds by first building a retraction of $M$ to a circle and lifting it to $\tilde{M}$.

Since $\phi: G\rightarrow \mathbb Z$ is surjective, pick an element $x \in G$  such that $\phi(x)=1$. Because $\dim(M) \geq 4$, there exists an embedded simple closed curve $C'\subset M$ representing the homotopy class $x\in \pi_1(M)$. Let $r: M \rightarrow C'\cong S^1$ be a continuous map corresponding to classifying map of $\phi$, which exists because $S^1$ is a $K(\mathbb{Z},1)$-space.

%Next, it is shown that there exists a retraction $R' : M \rightarrow C'$ (without any equivariance condition). 

Let $T(C')\subset M$ be a closed tubular neighborhood of $C'$. Since the pair $(M,T(C'))$ has the homotopy extension property (as a CW pair), and $T(C')$ is also a $K(\mathbb{Z},1)$, we can homotope $r|_{T(C')}$  to $Id|_{T(C')}$.  Applying homotopy extension property to the map which is equal to $r$ on $M  \times \{0\} $ and is equal to the aforementioned homotopy from $r|_{T(C')}$ to the identity on $ T(C') \times [0,1] $, producing a homotopy $W: M \times [0,1] \rightarrow T(C')$. Set $r' = W|_{M \times \{1\}}$. By smoothing results \cite[Theorem 6.26]{L}, $r'$ is homotopic to a smooth map equal to the identity on a smaller closed tubular neighborhood $T'(C')\subset T(C')$\footnote{\cite[Theorem 6.26]{L} is a variant of the Whitney approximation theorem, proved by embedding the relevant smooth manifolds into Euclidean space and applying approximation results in analysis.}. Composing with the projection $T'(C') \rightarrow C'$, we obtain the required smooth retraction $$R' : M \rightarrow C'.$$ 

%Let $T'(C')$ be a smaller closed tubular neighbourhood of $C'$ then $r'$ is smooth on $T'(C')$ because it is equal to the identity on a larger neighbourhood. 

Now consider a circle $C:=\pi^{-1}(C')\subset \tilde M$,  which is a double cover of $C'$. The composition $R' \circ \pi : \tilde{M} \rightarrow C'$ induces a homomorphism on $\pi_1(\tilde M)$ whose image lies in $2\mathbb{Z}\subset \mathbb Z$. By the lifting criterion for the covering spaces, this lifts to a map: $$R: \tilde{M} \rightarrow C.$$  After possibly precomposing $R$ with the non-trivial deck transformation $\tau$, we may assume $R$ is $\mathbb Z_2$-equivariant with respect to $\tau$.

We now follow the same construction  as in Theorem \ref{exampleir}. Define the standard embedding  $i: S^1 \rightarrow S^2$ by $$i(x,y) = (x,y,0).$$ Then define an embedding: $$\mu : \tilde{M} \rightarrow M \times S^2, \quad \mu(p) = (\pi(p), i \circ R (p)).$$ Then image $\mu(\tilde{M})\subset M\times S^2$ is a smooth bisection intersecting each $S^2$-fiber in a pair of antipodal points. 

Define an involution on $M\times S^2$ by acting as the identity on $M$ and as rotation by $\pi$ about the equator on each fiber $S^2$, with fixed points the intersection  $\mu(\tilde{M})\cap (M\times S^2)$. The construction of the $\mathbb Z_2$-invariant symplectic form on $M\times S^2$ follows exactly as in Theorem \ref{exampleir} and is omitted. 

The last claim follows from the K\"{u}nneth theorem, which implies that  $H_{i}(M \times S^2 ,\mathbb{R})$ is generated by classes of the form $N \times \{p\}$ and $N' \times S^2$, where $N\subset M$ is an $i$-cycle and $N'\subset M$ is an $(i-2)$-cycle. The induced map on homology $\pi_{*}: H_{i}(\tilde M,\mathbb{R}) \rightarrow H_{i}(M,\mathbb{R})$ is surjective for all $i$ (see \cite[Lemma 1.2]{R}%or  \cite[Lemma 1.2]{N}%
).  It follows that any class of the form $N \times \{p\}$ is preserved by the involution. 

For classes of the form $N' \times S^2$, note that these are preserved on the level of singular chains by the product structure and the fact that the involution acts trivially on $M$ and by an orientation-preserving homeomorphism on $S^2$. Hence, the homology classes are preserved as well. 

%automatically preserved by the action on the level of chains and are orientation preserving homeomorphisms, thus they are preserved.

 %It may be proven directly that such classes are preserved by the action. 
 Finally, since the involution is trivial on a basis of $H_{i}(M \times S^2,\mathbb{R})$, and  because each homology group $H_{i}(M \times S^2,\mathbb{Z})$ is torsion-free, it follows that the involution acts trivially on $H_{i}(M \times S^2,\mathbb{Z})$ for all $i$. \end{proof}

The following specialization of the previous result gives a general recipe for constructing exotic examples in dimension $6$, starting from any finitely presented group $G$ satisfying  mild conditions. 
\begin{cor}\label{6dexample}
Let $G\ncong \mathbb Z\oplus \mathbb Z$ be a finitely presented group, such that its abelianization $G_{ab}= G/[G,G]$ is torsion-free, and suppose there exists a surjective homomorphism $\phi : G \rightarrow \mathbb{Z}$. Then, there exists a  closed symplectic $4$-manifold $M$, with $\pi_1(M) \cong G$, such that $M \times S^2$ admits a homologically trivial symplectic involution with non-empty fixed point set that does not extend to a Hamiltonian $S^1$-action.
\end{cor}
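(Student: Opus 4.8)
The plan is to feed a suitably chosen symplectic $4$-manifold into Proposition \ref{highex} and then obstruct any Hamiltonian extension by a fundamental-group argument of the type used in Corollary \ref{main1'}.

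First I would realize the group. By Gompf's theorem \cite{Gompf}, every finitely presented group occurs as the fundamental group of a closed symplectic $4$-manifold, so I take a closed symplectic $4$-manifold $M$ with $\pi_1(M)\cong G$. Next I would check that $M$ has torsion-free integral homology. Since $M$ is a closed oriented $4$-manifold, Poincaré duality together with the universal coefficient theorem give $\mathrm{Tor}\,H_2(M,\mathbb Z)\cong\mathrm{Tor}\,H_1(M,\mathbb Z)$ and $H_3(M,\mathbb Z)\cong\mathrm{Hom}(H_1(M,\mathbb Z),\mathbb Z)$, while $H_0$ and $H_4$ are free. As $H_1(M,\mathbb Z)\cong G_{ab}$ is torsion-free by hypothesis, every homology group of $M$ is torsion-free, and by Künneth the same holds for $M\times S^2$.

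Then I would arrange the hypotheses of Proposition \ref{highex}. Setting $H:=\phi^{-1}(2\mathbb Z)\subset G$, surjectivity of $\phi$ makes $H$ an index-$2$ subgroup with $\phi(H)=2\mathbb Z$. Proposition \ref{highex} then yields a homologically trivial symplectic involution $\tau$ on $M\times S^2$ whose fixed point set is the connected double cover $\pi:\tilde M\to M$ associated to $H$; in particular $\mathrm{Fix}(\tau)=\tilde M\neq\emptyset$ and $\pi_1(\tilde M)\cong H$. Because the embedding $\mu:\tilde M\hookrightarrow M\times S^2$ satisfies $\mathrm{pr}_M\circ\mu=\pi$ and $\pi_1(M\times S^2)\cong\pi_1(M)\cong G$, the image of $\mu_*:\pi_1(\tilde M)\to\pi_1(M\times S^2)$ is exactly the proper subgroup $H\subsetneq G$.

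The final and decisive step is non-extendability. Suppose $\tau$ extended to a Hamiltonian $S^1$-action with $\mathbb Z_2\subset S^1$ the involution. Its moment map is a nonconstant Morse--Bott function whose extreme values are attained on two disjoint fixed submanifolds $M_{\max},M_{\min}$; since every non-extremal fixed component has Morse--Bott index and coindex at least $2$, each inclusion $M_{\max},M_{\min}\hookrightarrow M\times S^2$ induces an isomorphism on $\pi_1$ (the input from \cite{Lh} used in Corollary \ref{main1'}), so in particular $\pi_1(M_{\max})\to\pi_1(M\times S^2)\cong G$ is surjective. But $\mathrm{Fix}(S^1)\subseteq\mathrm{Fix}(\mathbb Z_2)=\tilde M$, so this map factors through $\mu_*\pi_1(\tilde M)=H$ and lands in the proper subgroup $H\subsetneq G$, a contradiction. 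Hence $\tau$ admits no Hamiltonian $S^1$-extension. The hard part is the $\pi_1$-bookkeeping: one must confirm that the embedded fixed locus $\tilde M$ carries only the index-$2$ subgroup $H$ of $G$, and must invoke the precise fact that the extremal fixed submanifolds of a Hamiltonian circle action recover the full fundamental group of the ambient manifold; granting these, the conclusion is a direct synthesis of Proposition \ref{highex} and Gompf's realization theorem.
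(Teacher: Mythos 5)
Your proof is correct, and while the construction half (Gompf realization of $G$, the torsion-freeness of $H_*(M,\mathbb Z)$ via Poincar\'e duality and universal coefficients, and the application of Proposition \ref{highex} with $H=\phi^{-1}(2\mathbb Z)$) matches the paper exactly, your non-extendability argument takes a genuinely different route. The paper also starts from the containment $\mathrm{Fix}(S^1)\subseteq\mu(\tilde M)$, but then argues by dimension: $\tilde M$ is connected and a nontrivial Hamiltonian circle action has at least two extremal fixed components, so these components cannot be $4$-dimensional and must be symplectic surfaces; by \cite{Lh} their fundamental group is $G$, forcing $G$ to be a surface group, which is excluded by hypothesis except for non-abelian surface groups, a case the paper defers to Corollary \ref{main1'}. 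You instead use a subgroup obstruction: the inclusion $M_{\max}\hookrightarrow M\times S^2$ factors through $\mu(\tilde M)$, and since $\mathrm{pr}_M\circ\mu=\pi$ identifies $\mu_*\pi_1(\tilde M)$ with the index-$2$ subgroup $H\subsetneq G$, the map $\pi_1(M_{\max})\to\pi_1(M\times S^2)\cong G$ cannot be surjective, contradicting \cite{Lh}. This is valid regardless of the dimension of the extremal components, avoids the case split on whether $G$ is a surface group, and does not even invoke the hypothesis $G\ncong\mathbb Z\oplus\mathbb Z$, so your argument is both cleaner and slightly more general than the paper's; the only (cosmetic) imprecision is writing $\mathrm{Fix}(\mathbb Z_2)=\tilde M$ where the fixed set is the embedded image $\mu(\tilde M)$.
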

\begin{proof}
The existence of a closed symplectic $4$-manifold with $\pi_{1}(M) = G$ follows from \cite[Theorem 1.1]{Gom}. Let $H= \phi^{-1}(2\mathbb{Z})$, a subgroup of index $2$ in $G$.  By the universal coefficient theorem, the assumption that $G_{ab} = H_{1}(M,\mathbb{Z})$ is torsion-free implies that all homology groups of $M$ are torsion-free. Then, by Proposition \ref{highex}, $M\times S^2$ admits a homologically trivial symplectic involution with a non-empty fixed point set.

Now assume $G$ is not a surface group. Suppose, for contradiction, that the involution extends to a Hamiltonian $S^1$-action. Then the fixed point set must include at least two connected components corresponding to the maximum and minimum levels of the moment map.  Since these components are symplectic submanifolds of the connected $4$-dimensional symplectic manifold $\tilde M\subset M\times S^2$ corresponding to the subgroup $H$, they cannot be $4$-dimensional. Hence, they are necessarily symplectic surfaces whose fundamental groups are isomorphic to $\pi_1(M)\cong G$. But this contradicts the assumption that $G$ is not a surface group. 

In the case where $G$ is a non-abelian surface group, the construction and the non-extendability claim are addressed in Corollary \ref{main1'}.
\end{proof}

\subsection{Exotic actions of the Klein 4-group on irrational ruled symplectic $4$-manifolds}

Using the symmetry among different embeddings of $S^1$ into $S^2$ relative to a pair of antipodal points, we can construct exotic, homologically trivial symplectic actions of the Klein $4$-group on irrational ruled $4$-manifolds diffeomorphic to $\Sigma_g\times S^2$, for any genus $g\ge 1$. 

Recall that the Klein $4$-group is the only non-cyclic abelian subgroup that can arise as a group of homologically trivial symplectomorphisms on irrational ruled symplectic $4$-manifolds. This makes it a natural next case to investigate beyond the cyclic examples constructed earlier.  

\begin{theorem}\label{exoticklein}
For every integer $g\ge 1$, there exists a homologically trivial symplectic action of the Klein $4$-group $\mathbb Z_2\times \mathbb Z_2$ on an irrational ruled $4$-manifold diffeomorphic to $\Sigma_g\times S^2$, such that two out of the three non-trivial involutions in the group action cannot be extended to a symplectic circle action, even after possibly modifying the symplectic form. 

Moreover, this Klein $4$-group action cannot be extended to a symplectic action of $\mathrm{SO}(3)$,  with respect to any embedding $\mathbb Z_2\times \mathbb Z_2\subset \mathrm{SO}(3)$.  
\end{theorem}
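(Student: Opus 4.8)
The plan is to produce the Klein four-group action by a fibrewise construction refining Theorem \ref{exampleir}. The group $V=\mathbb{Z}_2\times\mathbb{Z}_2$ embeds in $\mathrm{SO}(3)$ as $\{\mathrm{id},R_1,R_2,R_3\}$, the rotations by $\pi$ about three mutually orthogonal axes, and such a triple of involutions corresponds exactly to an ordered triple of mutually orthogonal lines in $\mathbb{R}^3$. Over $S=\Sigma_g$ I would choose a smooth field $x\mapsto(\ell_1(x),\ell_2(x),\ell_3(x))$ of orthonormal line-frames in the fibres of $S\times S^2$, that is a map $\Phi\colon S\to \mathrm{SO}(3)/V$, and let $q_i$ act fibrewise as rotation by $\pi$ about $\ell_i(x)$. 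Since rotation by $\pi$ depends only on the unoriented line, each $q_i$ is a well-defined involution, the three commute and generate $V$, and $\mathrm{Fix}(q_i)$ is the set of $(x,p)$ with $p\in\ell_i(x)$, a bisection of the ruling. This bisection is the double cover of $S$ classified by $c_i:=\chi_i\circ\rho\in H^1(S;\mathbb{Z}_2)$, where $\rho\colon\pi_1(S)\to V$ is the monodromy of $\Phi$ and $\chi_i\colon V\to\mathbb{Z}_2$ is the character with kernel $\langle R_i\rangle$; it is connected precisely when $c_i\neq 0$.

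The heart of the matter is to make all three bisections connected on the \emph{trivial} bundle $\Sigma_g\times S^2$. The classes obey $c_1+c_2+c_3=0$ (because $\chi_1+\chi_2+\chi_3=0$ on $V$), so I need $\rho$ surjective, and I also need $\Phi$ to exist, which requires $\rho$ to lift along $Q_8=\pi_1(\mathrm{SO}(3)/V)\to V$. For $g\ge 2$ both conditions hold simultaneously: setting $\rho(a_1)=\rho(b_1)=R_1$ and $\rho(a_2)=\rho(b_2)=R_2$ on standard generators makes $\rho$ surjective while the lifting obstruction $\sum_\ell\langle\rho(a_\ell),\rho(b_\ell)\rangle$ vanishes, so $\Phi$ exists and the associated $S^2$-bundle is trivial. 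I then symmetrize the product form exactly as in Theorem \ref{exampleir}, averaging over $V$ and adding a large multiple of a base area form, to obtain a $V$-invariant symplectic $\omega_0$ with the stated cohomology class. Homological triviality follows as there: $[F]$ and the bisection classes span $H_2$, each connected bisection surjects onto $H_1(S)\cong H_1(M)$, so by Remark \ref{h12triv} the action is homologically trivial; each $q_i$ then fails to extend to a symplectic circle action by Proposition \ref{sections} and Corollary \ref{invtocircle}, since a connected bisection can never contain the two disjoint sections forced on the fixed set of a homologically trivial symplectic circle action.

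The genus-one case is where I expect the real difficulty. On $T^2\times S^2$ the monodromy $\rho\colon\mathbb{Z}^2\to V$ must lift to a pair of \emph{commuting} elements of $Q_8$; since every abelian subgroup of $Q_8$ is cyclic, $\rho$ cannot be surjective, and at most two of the three bisections can be made connected fibrewise. Worse, a homologically trivial involution of $T^2\times S^2$ with a fixed point must act trivially on the base (a homologically trivial periodic map of $T^2$ fixing a point is the identity, by Lemma \ref{freelemma}), so one cannot remedy this by translating the base without losing the fixed points, and a freely acting half-translation of product type does extend to a symplectic circle action. Handling $g=1$ therefore needs a genuinely different model together with the fundamental-group obstruction from the genus-one discussion after Corollary \ref{invtocircle} (and, if needed, an equivariant blow-up producing a torus-plus-points fixed set so as to invoke the Hamiltonian form of the obstruction). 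I regard carrying this out as the principal obstacle of the theorem.

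Finally, for non-extension to $\mathrm{SO}(3)$: suppose the $V$-action extended to a symplectic $\mathrm{SO}(3)$-action for some invariant symplectic form and some embedding $V\subset\mathrm{SO}(3)$. As $\mathrm{SO}(3)$ is connected and the homological representation is locally constant, the entire $\mathrm{SO}(3)$-action is homologically trivial. Each $q_i$ lies in a circle subgroup $S^1_i\subset\mathrm{SO}(3)$ (the rotations fixing its axis), so $S^1_i$ acts as a homologically trivial symplectic circle action with $\mathrm{Fix}(S^1_i)\subseteq\mathrm{Fix}(q_i)$. For $g\ge 2$ the left-hand side must be two disjoint sections while the right-hand side is a single connected bisection, which is impossible, since a connected surface cannot contain two disjoint sections as a closed two-dimensional subset; the genus-one case uses the same fundamental-group form of the obstruction. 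This contradiction rules out any such $\mathrm{SO}(3)$-extension and completes the plan.
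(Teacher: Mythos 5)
Your argument for $g\ge 2$ is correct, and it follows the same overall strategy as the paper (fiberwise rotations by $\pi$ about three mutually orthogonal axis fields, averaging the product form, then invoking Corollary \ref{invtocircle} and the fact that every involution in $\mathrm{SO}(3)$ lies on a maximal torus). The essential difference is your monodromy bookkeeping, and it is not a cosmetic one: the relation $c_1+c_2+c_3=0$ that you isolate actually exposes a defect in the paper's own construction. There the three axis fields are pulled back from a single circle, so the monodromy of the line-frame field factors through $\pi_1(C/\pm)\cong\mathbb Z$ and hence through a cyclic subgroup of $V$, forcing some $c_i=0$. Concretely, since $i_1(\theta)\perp i_2(\theta)$, the third pair of fixed points in the fiber is $\{\pm\, i_1(\theta)\times i_2(\theta)\}$, and $i_1(\theta+\pi)\times i_2(\theta+\pi)=i_1(\theta)\times i_2(\theta)$, so no continuous choice of $i_3$ with $i_3(\theta+\pi)=-i_3(\theta)$ exists: $\mathrm{Fix}(q_3)$ is unavoidably the union of the two disjoint sections $p\mapsto(p,\pm(i_1\times i_2)(R(X)))$, and $q_3=q_1q_2$ extends to the symplectic circle action rotating each fiber about that well-defined oriented axis. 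Your replacement --- prescribing a surjective $\rho\colon\pi_1(\Sigma_g)\to V$ that lifts to $Q_8$, which your commutator-pairing computation shows is possible once $g\ge2$ --- is exactly what is needed to make all three fixed sets connected bisections; the remaining steps (invariant form, homological triviality via Remark \ref{h12triv}, non-extension of each $q_i$, reduction of the $\mathrm{SO}(3)$ statement to the circle statement) agree with the paper.

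The gap you flag at $g=1$ is genuine, and it is a gap in the theorem as proved, not merely in your write-up. As you note, a fiberwise $V$-action is classified up to homotopy by a map $T^2\to\mathrm{SO}(3)/V$ (Smale's theorem reduces the a priori larger configuration space of smooth fiberwise Klein actions to this one up to homotopy), its monodromy must lift to a commuting pair in $Q_8=\pi_1(\mathrm{SO}(3)/V)$, every abelian subgroup of $Q_8$ is cyclic, so $\rho$ cannot be onto $V$ and at least one $\mathrm{Fix}(q_i)$ is two disjoint sections --- for which the corresponding $q_i$ does extend to a symplectic circle action. Letting some elements of $V$ move the base does not rescue the statement: by Lemma \ref{freelemma} such an element acts freely on $T^2$, hence freely on $M$ if it also rotates fibers nontrivially or translates them, and none of the fixed-point obstructions used in the paper applies to a free involution (a half-period translation in the base visibly extends to a symplectic, non-Hamiltonian, circle action). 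So the $g=1$ case cannot be obtained by any construction of this type; you are right to identify it as the principal obstacle, and the honest conclusion is that your proof establishes the theorem for $g\ge2$ while the genus-one case requires either a different obstruction or a weakening of the statement.
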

\begin{proof}
The idea is to apply Theorem \ref{exampleir} to construct three involutions. For each $j=1,2,3$, let $i_j: S^1 \rightarrow S^2$ be such that the associated embeddings $\mu_1, \mu_2, \mu_3$ have pairwise disjoint images. The resulting involutions have fixed point sets given by Im$(\mu_j)$, respectively. To ensure that the these involutions and the identity give an action of $\mathbb{Z}_2 \times \mathbb{Z}_2$ on  $\Sigma_g\times S^2$, we have to ensure that the images of $i_j$ form an orthonormal frame of $\mathbb{R}^3$.

%and transforming the $S^2$-factors by rotations by $\pi$ with rotation axis determined by the image of $i_j$ and its antipodal point

This may be done as follows. Let $$i_1(\cos\theta, \sin\theta)=(\cos\theta, \sin\theta, 0),$$ $$i_2(\cos\theta, \sin\theta)=(-\sin\theta, \cos\theta, 0)$$ and $i_{3}(\cos\theta, \sin\theta) = (0,0,1)$. The associated involutions $q_1$ and $q_2$ are exotic in the sense that they cannot be extended to symplectic circle actions, the proof is as in Corollary \ref{invtocircle} using the fact that fixed point set is connected. However, the third involution $q_3$, generated by composing $q_1$ and $q_2$, is not exotic.  Its fixed point set in each $S^2$-fiber is the standard pair $(0, 0, \pm 1)$, and $q_3$ acts as the standard product involution $\mathrm{id_{\Sigma}}\times r$ of $\Sigma_g\times S^2$, where $r$ is the $180^{\circ}$ rotation about the $z$-axis on $S^2$ .

To construct a symplectic form invariant under this group action, we average the standard product symplectic form $\omega$ over the group: $$\alpha = \frac{1}{4}(\omega + q_1^{*}\omega+ q_2^{*}\omega+ q_3^{*}\omega).$$
Since each $q_j$ is a symplectomorphism on fibers and acts trivially on the base, $\alpha$ is closed and non-degenerate on the fibers. Let $\beta$ be the pullback of an area form on $\Sigma_g$. Then for sufficiently large $K\gg 0$,  the form $$\omega'=\alpha+K\beta$$ is a symplectic form invariant under the Klein $4$-group action.

Note that the fixed point sets of the involutions associated to $i_1$ and $i_{2}$ are bisections. It follows from Corollary \ref{invtocircle} that involutions $q_1,q_2$ cannot extend to a symplectic circle action, even after changing the symplectic form. 

For the last statement, suppose the Klein $4$-group action could extend to a symplectic action of SO$(3)$ with some embedding $\mathbb Z_2\times \mathbb Z_2\subset \mathrm{SO}(3)$. Then  any nontrivial element $a\in \mathbb Z_2\times \mathbb Z_2\subset \mathrm{SO}(3)$ would lie on  some maximal torus $S^1\subset \mathrm{SO}(3)$. But as shown above, two such $a$ cannot extend to a symplectic circle action. This contradiction shows the Klein $4$-group action does not extend to SO$(3)$ symplectically. 
\end{proof}

In Section \ref{dihedral}, some actions of a dihedral group $D_{2k}$ on minimal irrational ruled surfaces are exhibited, for which the fixed set of some of the elements is a bisection and which preserve integrable complex structures of a certain type.

\section{Extension of finite symplectic actions on irrational ruled $4$-manifolds} \label{circle}

In this section, we mainly focus on the case where $(M,\omega)$ is an irrational ruled symplectic $4$-manifold with $b_{2}(M)=2$ and  base genus $g 
\geq 2$. Suppose that $f : M \rightarrow M$ is a symplectomorphism generating a homologically trivial $\mathbb{Z}_k$-action with $k>2$. We will demonstrate that, in contrast to the case of $\mathbb{Z}_2$ case, where exotic actions may occur, any such $\mathbb Z_k$-action with $k>2$  extends to a Hamiltonian circle actions. This is Theorems \ref{main2}  stated in the introduction.

 Before proceeding, we take a brief detour to recall some elementary facts about M\"{o}bius transformations of $\mathbb{CP}^1$. The key fact is that there is a canonical way to extend an effective $\mathbb{Z}_{n}$-action on $\mathbb{CP}^1$ that preserves its complex structure to an $S^1$-action on $\mathbb{CP}^1$ that also preserves the complex structure, which we now describe.

Let $\alpha : \mathbb{CP}^1 \rightarrow \mathbb{CP}^1$ be the M\"{o}bius transformation generating the $\mathbb{Z}_{n}$-action. Since $\alpha$ has finite order, it is elliptic, and hence, in suitable coordinates, is given by the map $$z \mapsto e^{\frac{2\pi i}{n}} z.$$ We denote the two fixed points of $\alpha$ by $\alpha_0$ and $\alpha_{\infty}$,  corresponding respectively to $0$ and $\infty$ in these coordinates. This $\mathbb{Z}_{n}$-action naturally extends to an $S^{1}$-action by $$e^{2\pi i \theta} \cdot z = e^{2\pi i \theta} z.$$ 

This discussion may be summarized in the following key fact:

\textbf{Key fact.} Given an \textit{ordered} pair of points $p_{1},p_{2} \in \mathbb{CP}^1$ and  angle $\theta \in [0,2\pi)$, there exists a unique elliptic M\"{o}bius transformation $\alpha : \mathbb{CP}^1 \rightarrow \mathbb{CP}^1$ with fixed point set $\{p_{1},p_{2}\}$, such that the induced action on the tangent space at $p_{1}$ is multiplication by $e^{2 \pi i \theta}$.

This is the central idea underpinning our extension process. By Proposition \ref{sections},  the fixed point set of the $\mathbb Z_k$-action on $M$ consists of two disjoint sections, and thus  each fiber contains two fixed points with a natural ordering.  
Moreover, the associated M\"{o}bius transformations depend smoothly on both the choice of fixed points and the rotation angle. 

To make this dependence explicit, let $V \subset S^2 \times S^2$ denote the open subset consisting of ordered pairs of distinct points ({\it i.e.}, the complement of the diagonal).
 Then there exists a smooth map $$\phi_{\alpha} : V \times S^1 \times S^2 \rightarrow S^2,$$ to extend the finite order M\"obius transformation $\alpha$, such that  for fixed $(p_1, p_2)\in V$ and $\theta\in S^1$, the restriction $\phi_{\alpha}|_{(p_1,p_2) \times \{\theta\} \times S^2}  $ is the elliptic M\"{o}bius transformation with angle $\theta$ and fixed points $p_1$, $p_{2}$,  acting on the tangent space at $p_{1}$ by multiplication with $e^{2 \pi i \theta}$. Furthermore, $$\phi_{\alpha}|_{(\alpha_0,\alpha_{\infty}) \times \{\frac{2\pi}{n}\} \times S^2}=\alpha.$$

We now return to the primary situation of interest, where $M$ is an irrational ruled symplectic $4$-manifold with $b_{2}(M) =2$ and  base genus at least $2$, to prove Theorem \ref{main2}.
\begin{theorem} \label{theorem}

Let $(M,\omega)$ be an irrational ruled symplectic $4$-manifold with $b_{2}(M)=2$, and suppose the base has genus $g 
\geq 2$. Let $f : M \rightarrow M$ be a homologically trivial symplectomorphism generating an effective $\mathbb{Z}_k$-action with $k>2$. Then  the $\mathbb{Z}_{k}$-action extends to a Hamiltonian $S^1$-action for some possibly different symplectic form $\omega'$, under the standard embedding $\mathbb{Z}_k \subset S^1$.

\end{theorem}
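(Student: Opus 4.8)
The plan is to upgrade the finite rotation to a fiberwise circle action governed by the two fixed sections, and then produce an invariant symplectic form making it Hamiltonian. First I would fix, via Lemma \ref{inv}, a $\mathbb{Z}_k$-invariant $\omega$-compatible almost complex structure $J$. Since $b_2(M)=2$, Theorem \ref{fibration}(c) makes the associated moduli map $\pi\colon M\to\Sigma$ a smooth $S^2$-bundle with $J$-holomorphic fibers, and Proposition \ref{bundle}(a) (using $g\ge 2$) shows that $f$ preserves every fiber and acts effectively on it; on a fiber $F_p\cong\mathbb{CP}^1$ this restriction is an elliptic M\"obius transformation. By Proposition \ref{sections} the fixed locus $M^{\mathbb{Z}_k}$ consists of two disjoint sections $\Sigma_1,\Sigma_2$, meeting each fiber in the two fixed points $q_1(p),q_2(p)$ of $f|_{F_p}$. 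By the weight-constancy recalled in Section \ref{weightsprel}, the weight of $f$ along $\Sigma_1$ is a constant $a$ with $\gcd(a,k)=1$; after replacing $f$ by the generator $f^{b}$ of the same group with $ab\equiv 1\pmod k$, I may assume $f|_{F_p}$ acts on $T_{q_1(p)}F_p$ by $e^{2\pi i/k}$.

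\textbf{The circle action (the hard part).} Next I would promote this finite rotation to a genuine smooth circle action fiberwise. For each $\theta\in S^1$ let $\Phi_\theta$ be the map whose restriction to $F_p$ is the elliptic M\"obius transformation with ordered fixed points $(q_1(p),q_2(p))$ acting on $T_{q_1(p)}F_p$ by $e^{2\pi i\theta}$, that is, the transformation furnished by the Key fact recalled above. This is intrinsically defined because the tangent-action angle at a fixed point is a biholomorphic invariant. \emph{The main obstacle is precisely to show that these fiberwise elliptic rotations vary smoothly in $p$ and patch into an honest smooth $S^1$-action on $M$, rather than merely a family of fiberwise bijections.} I would settle this by working in smooth local trivializations of $\pi$ that are fiberwise biholomorphic and carry $\Sigma_1,\Sigma_2$ to the constant sections $0,\infty$; such trivializations exist since a smooth family of complex structures on $S^2$ admits smoothly varying uniformizing biholomorphisms to $\mathbb{CP}^1$, and equivalently $\Phi_\theta$ can be written through the smooth map $\phi_{\alpha}$ evaluated at the smoothly varying pair $(q_1(p),q_2(p))$. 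In such a chart $\Phi_\theta$ is simply $z\mapsto e^{2\pi i\theta}z$, a smooth circle action, and the transition maps fix $0$ and $\infty$ and hence commute with it, so the $\Phi_\theta$ glue globally. By the normalization above and the uniqueness clause of the Key fact, $\Phi_{1/k}=f$, so $\Phi$ restricts to the given $\mathbb{Z}_k$-action under the standard embedding $\mathbb{Z}_k\subset S^1$.

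\textbf{Invariant form and Hamiltonicity.} Finally I would manufacture an invariant symplectic form. Set $\alpha=\int_0^1\Phi_s^*\omega\,ds$ and $\beta=\pi^*\sigma$ for an area form $\sigma$ on $\Sigma$. Then $\alpha$ is closed and $S^1$-invariant (reparametrizing the integral gives $\Phi_t^*\alpha=\int_0^1\Phi_{s+t}^*\omega\,ds=\alpha$), restricts to a positive area form on each fiber, while $\beta$ is $S^1$-invariant because $\Phi$ is fiberwise; since $\beta\wedge\beta=0$ and $\alpha\wedge\beta>0$, the form $\omega'=\alpha+K\beta$ is symplectic and $S^1$-invariant for $K\gg 0$, exactly as in the construction of Theorem \ref{exampleir}. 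Thus $\Phi$ is a symplectic circle action for $\omega'$, with $f=\Phi_{1/k}$ an $\omega'$-symplectomorphism and non-empty fixed set $\Sigma_1\sqcup\Sigma_2$. By McDuff's theorem that a symplectic circle action on a closed symplectic four-manifold with a fixed point is Hamiltonian, $\Phi$ is Hamiltonian, which yields the desired extension. Everything in this last step is formal once the smooth circle action $\Phi$ has been constructed; the only genuinely delicate point remains the smoothness and global patching of the fiberwise rotations, for which the Key fact and the map $\phi_{\alpha}$ were prepared.
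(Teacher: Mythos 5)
Your proposal is correct and follows essentially the same route as the paper: both extend the finite elliptic rotation fiberwise using the two fixed sections and the uniqueness of the elliptic M\"obius transformation with prescribed ordered fixed points and tangent angle, then average $\omega$ over the resulting circle action, add a large multiple of the pullback of a base area form, and invoke the fixed-point criterion for a symplectic $S^1$-action on a closed $4$-manifold to be Hamiltonian. The only notable differences are that you explicitly renormalize the generator to $f^{b}$ so that its weight along $\Sigma_1$ equals $1$ (a point the paper's proof passes over when it asserts $\phi_f((\Sigma_1(p),\Sigma_2(p)),\tfrac{2\pi}{k},p)=f$), and that you justify smoothness via fiberwise-biholomorphic local trivializations whose transition maps fix $0$ and $\infty$ and hence commute with the rotation, whereas the paper uses the evaluation map together with a density argument; both are adequate.
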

Here, the standard embedding $\mathbb{Z}_k \subset S^1$ means that the generator $f$ corresponds to the positive rotation by $\frac{2\pi}{k}$.
\begin{proof}
Let $f: M \rightarrow \Sigma$ be the moduli map, and $J$ the $\mathbb Z_k$-invariant,  $\omega$-compatible almost complex structure provided by Proposition \ref{bundle}. The assumption  $b_{2}(M)=2$ implies that every fiber is smooth. By Proposition \ref{sections}, there exist two sections, $\Sigma_{1},\Sigma_{2} : \Sigma_{g} \rightarrow M$, whose images consist of fixed points of the $\mathbb{Z}_k$-action. 

We define the $S^1$-action on the fiber $F$ containing a given point $p \in F$ as follows: for each $\theta \in S^1$, the action is given by the elliptic M\"{o}bius transformation with fixed points $\Sigma_1(p):=\Sigma_1(\pi(p))$ and $\Sigma_2(p):=\Sigma_2(\pi(p))$, such that the induced action on the tangent space of the fiber at $\Sigma_1(p)$ is multiplication by $e^{2 \pi i \theta}$. As described in the key fact above, this extension is unique.

After changing coordinates in a small neighbourhood $U \times S^2$ around a fiber $F$, we may assume that the restriction $J|_{F}$ is the standard almost complex structure. This coordinate change is given by the evaluation map (see Definition \ref{evmap} (c)). For the fact that the evaluation map is a local diffeomorphism in a neighborhood of  smooth fibers of the moduli map, see \cite[Page 185]{MS}. 
 In such a coordinate chart, the group action is defined by: $$\theta\cdot (u,p) =  (u, \phi_f((\Sigma_1(p),\Sigma_{2}(p)), \theta, p)),$$
where $\phi_f$ is defined immediately above the theorem.

The action of $\mathbb{Z}_k \subset S^1$ agrees with the original $\mathbb{Z}_k$-action since, by definition,  $$\phi_f((\Sigma_1(p),\Sigma_{2}(p)), \frac{2\pi}{k}, p)=f.$$ Since $f$ is a diffeomorphism, for any small $\epsilon$, the map $$f_{\epsilon}:=\phi_f((\Sigma_1(p),\Sigma_{2}(p)), \frac{2\pi}{k}+\epsilon, p)$$ is also a diffeomorphism. Choosing $\epsilon$ to be an irrational multiple of $\pi$, the map $f_{\epsilon}$ generates a dense orbit in $S^1$, which implies that the full group action $\theta\cdot (u,p)$ is a diffeomorphism for all $\theta\in S^1$. Hence, the group action is smooth. 

It remains to show that the circle action is Hamiltonian with respect to some symplectic form. We  first construct an $S^1$-invariant symplectic form. Let $\alpha$ be the average of $\omega$ with respect to the above constructed $S^1$-action. Then $\alpha$ is an $S^1$-invariant, closed $2$-form whose restriction to the the fibers' tangent spaces coincide with that of $\omega$.

Let $\beta$ be the pullback of any symplectic form on the base. Then the form $\omega' = \alpha + K \beta$ is closed and $S^1$-invariant for any $K>0$. For sufficiently large $K$, $\omega'$ is non-degenerate and thus symplectic.  Since this symplectic $S^1$-action has fixed points  $\Sigma_1$ and $\Sigma_2$, by \cite[Proposition 5.1.7]{MS}, the action on $(M,\omega')$ is Hamiltonian. 
\end{proof}

We remark that, in a different context, a similar averaging technique based on Thurston's construction, for producing invariant symplectic forms on a symplectic fibration with a group action, was used in \cite[Lemma 4.10]{W}. %The reader may note that in the proof we have only used the fact that the induced action on $H_{1},H_{2}$ is trivial. However, by Remark \ref{h12triv} this is equivalent to being homologically trivial in this context.

The same argument applies in the case where $b_{2}(M)=3$. In this setting, the fibration of $M$ by $J$-holomorphic spheres has only one singular fiber, which is a chain of two spheres. Their intersection point $x$ is the only fixed point of $f$ besides the two sections $\Sigma_1$ and $\Sigma_2$. As before, we can extend the $\mathbb Z_k$-action generated by $f$ to a smooth $S^1$-action such that, for each angle, the map is a diffeomorphism. On the singular fiber, this action corresponds to a combination of two elliptic diffeomorphisms of spheres, with fixed points  $\{\Sigma_1(x), x\}$ and  $\{x, \Sigma_2(x)\}$, respectively.

Section 6.4 of \cite{CK} explains how their construction of a homologically trivial symplectic involution on an irrational ruled symplectic $4$-manifold with $b_2(M)=5$ can be generalized to a $\mathbb Z_{2k}$-action on an irrational ruled symplectic $4$-manifold with $b_2(M)=2k+1$ for $k>1$. However, the argument in Section 6.3 of \cite{CK} shows that  these $\mathbb Z_{2k}$-actions do not extend to  Hamiltonian circle actions, even after possibly modifying the symplectic form. 

Hence, for each even integer $2k>2$, it is currently unknown whether there exist  examples of non-extendable homologically trivial $\mathbb Z_{2k}$-actions on irrational ruled symplectic $4$-manifolds with $4\le b_2(M)\le 2k$. In particular, do such examples exist when $b_2=4$? What about  cyclic group actions of odd order, {\it i.e.}, $\mathbb Z_{2l+1}$-actions?

The following corollary is a classification of homologically trivial symplectomorphisms of finite order $k$ greater than $2$ up to $\mathbb{Z}_k$-equivariant diffeomorphism. 

\begin{cor} \label{theoremtwo}
Let $(M,\omega)$ be an irrational ruled symplectic $4$-manifold with base genus $g 
\geq 2$. Let $f : M \rightarrow M$ be a homologically trivial symplectomorphism generating an effective $\mathbb{Z}_k$-action with $k>2$.

 Then there is a  minimal, ruled, symplectic $4$-manifold $\tilde{M}_{0}$ with a Hamiltonian $S^1$-action with a subgroup $\mathbb{Z}_k$-action and a $4$-manifold $\tilde{M}$  obtained by blowing up $\tilde{M}_{0}$ $\mathbb{Z}_k$-equivariantly along a sequence of $\mathbb{Z}_k$-fixed points such that $M$ is $\mathbb{Z}_k$-equivariantly diffeomorphic to $\tilde{M}$.
\end{cor}
\begin{proof}
Suppose that $b_{2}(M)>2$. Then $M$ contains a $-1$-sphere  $S$. Since $f$ acts trivially on $H_{2}(M,\mathbb{Z})$, we have $$S\cdot S = f(S)\cdot S = -1.$$ By the positivity of intersections of $J$-holomorphic curves, it follows that $f(S)=S$, {\it i.e.}, the sphere $S$ is invariant under the action. Therefore, $S$ can be blown down $\mathbb{Z}_k$-equivariantly, denote this by $\pi: M  \rightarrow M'$. Moreover, recall that $H_{2}(M',\mathbb{R})$ is isomorphic via pull-back to a subspace $H_{2}(M,\mathbb{R})$ of the blow-up so the induced action  on $H_{2}(M',\mathbb{R})$ is trivial. Moreover, since $\pi$ induces an isomorphism on $H_{1}$, the induced action on $H_{1}(M',\mathbb{R})$ is trivial. Therefore by Remark \ref{h12triv} the induced action on $M'$ is homologically trivial.

Repeating this process a finite number of times reduces the second Betti number to $b_{2}=2$. At that point, Theorem \ref{theorem} applies, completing the proof.
\end{proof}

Finally, we remark that the results presented in this section rely on the assumption that  the base genus $g\ge 2$. On the other hand, for geometrically ruled symplectic $4$-manifolds with base genus $g=0$, there is a uniform positive answer to Question \ref{kedra} for cyclic action of any order \cite{CKr}. 

When the base has genus one, Example \ref{torusexample} provides homologically trivial symplectic cyclic actions that do not extend to Hamiltonian circle actions.

\section{Algebraic Examples}\label{agreal}
In this section, we collect various examples of algebraic finite group actions on irrational ruled surfaces, which exhibit different aspects of our results. Let $C$ be a complex smooth projective curve. A smooth complex projective surface is called geometrically ruled if there is a morphism $\pi: S \rightarrow C$ such that every fiber is isomorphic to $\mathbb{P}^1$. Every geometrically ruled surface is isomorphic to $\mathbb{P}(E)$ for some rank $2$ holomorphic vector bundle over $C$; see Section \ref{prel}.

\subsection{Actions preserving $H_1$ but not $H_2$}
Let $\Sigma$ be any Riemann surface, and set $\alpha = e^{\frac{2 \pi i }{k}}$. Define a $\mathbb{Z}_{k}$-action on $M = \Sigma \times \mathbb{CP}^1$ by $$\alpha^{n}\cdot (p,[z_0:z_1]) = (p, [\alpha^{n} z_0:z_1])  .$$ This gives a homologically trivial (in fact, Hamiltonian) symplectic action with respect to the product symplectic form on $M$.

Now fix a point $p \in \Sigma$, and consider the free orbit $$\mathcal O =\{ (p,[\alpha^n:1])| n= 1, \cdots, k \}.$$ Blowing up the points in $\mathcal O$ using the standard equivariant local model yields a new symplectic $4$-manifold $\tilde{M} = Bl_{\mathcal O}(M)$, on which $\mathbb Z_k$ continues to act effectively. The resulting action is trivial on $H_{1}(\tilde{M},\mathbb{Z})$ but non-trivial on  $H_{2}(\tilde{M},\mathbb{Z})$. 

Furthermore, since the original action is holomorphic and admits fixed points, the blowup inherits a natural $\mathbb{Z}_k$-invariant  K\"{a}hler form. This can be realized explicitly via an equivariant embedding into a projective space followed by equivariant resolution of singularities. 

This gives the following example.

\begin{example} \label{h1noth2}
For every $k\in \mathbb Z_{>0}$ and every base genus, there exist effective symplectic $\mathbb{Z}_{k}$-actions on irrational ruled symplectic $4$-manifolds $M$ that act trivially on $H_{1}(M,\mathbb{Z})$ but non-trivially on  $H_{2}(M,\mathbb{Z})$. 
\end{example} 

\subsection{Short exact sequences of automorphisms}

The following example illustrates the short exact sequences  constructed in Section \ref{finclass}.

\begin{example}\label{nonsplit} Let $C \subset \mathbb{CP}^2$ be the smooth elliptic curve defined by $$z_{1}^3 + z_{2}^3 + z_{3}^3=0.$$ Let $G = S_{3}$ act on $C$ by permuting the homogeneous coordinates. Let $N_{C} \cong \mathcal{O}(3)|_{C}$ be the normal bundle of $C$. Then the action of $G$ on $C$ induces an action on $N_{C}$, and thus on the projectivized bundle $\mathbb{P}(N_{C}\oplus \mathcal{O})$, which is a non-trivial $S^2$-bundle over $C\cong T^2$.
\end{example}
 
Observe that every non-trivial element of $S_{3}$ induces a non-trivial automorphism of $C$, so $H_{1}$ is the trivial group. Moreover, the elements of order $3$ act on $C$ without  fixed points, and hence trivially on $H_{1}(C,\mathbb{Z})$; while the elements of order $2$ each fix a single point of $C$. 

For example, the transformation $\tau:[z_1:z_2:z_3] \mapsto [z_2:z_1:z_3]$ has four fixed points: $$ [1:1 : -\sqrt[3]{2} \alpha] \in C,$$ where $\alpha^3=1$ and $[-1:1:0]$. That is, it is the Galois involution giving the elliptic curve as a branched double cover of the Riemann sphere. By the Lefschetz fixed point theorem, $\tau$ acts on $H_{1}(C,\mathbb{Z})$ with trace $-2$ and order $2$, that is, $\tau_{*} = -Id$.

Next, consider the matrix
\[
  \begin{bmatrix}
    0 & 0 & 1 \\
    1 & 0 & 0 \\
   0 & 1 & 0
  \end{bmatrix}
\]
which has three distinct eigenvalues. The corresponding transformation $$\mu:[z_1:z_2:z_3] \mapsto [z_3:z_1:z_2]$$ on $\mathbb{CP}^2$ has three fixed points $[1:1:1], [1:\beta:\beta^2], [1:\beta^2:\beta],$ where $\beta = e^{\frac{2 \pi i}{3}}$. A direct check via the defining equation shows that none of these points lie on $C$. 
Therefore, the first exact sequence of Corollary \ref{finsym} takes the form $$1 \rightarrow \mathbb{Z}_{3} \rightarrow G \rightarrow \mathbb{Z}_2 \rightarrow 1.$$ In particular, this shows that the  short exact sequences in Corollary \ref{finsym} do not split in general. Moreover, since all of the transformations preserve both the $S^2$-fibers and the section $\mathbb{P}(N_{C} \oplus 0)$, they act trivially on $H_{2}(M,\mathbb{Z})$. This conclusion also follows directly from Proposition \ref{acth2}, as minimal irrational ruled surfaces, such as  $M=\mathbb{P}(N_{C}\oplus \mathcal{O})$, contain no Lagrangian spheres.

\subsection{Exotic dihedral actions preserving a complex structure} \label{dihedral}

From the structure of their fixed point sets and by Lemma \ref{holfix}, it follows that the homologically trivial actions of $\mathbb{Z}_2$ and $\mathbb{Z}_2 \oplus \mathbb{Z}_2$ constructed in Theorem \ref{exampleir} and Theorem \ref{exoticklein} respectively cannot preserve the product complex structure on $\Sigma\times S^2$. Furthermore, as shown, these actions do not extend to  Hamiltonian actions of $S^1$ or SO$(3)$, respectively. 

In this subsection, we observe that certain holomorphic involutions with respect to the integrable complex structures of a certain type on $\Sigma \times S^2$ can also have connected bisection fixed point sets. This provides an  algebro-geometric model for the $\mathbb Z_2$-action construction in Section \ref{exoticsymplecticinv}.

 In the following, we work over the field of complex numbers $\mathbb C$, and we adopt the notation established in Section \ref{aut} of the preliminaries.

\begin{prop} \label{algebraicrel} 
Let $C$ be a smooth algebraic curve, and $L$ a non-trivial line bundle on $C$ with $L \otimes L \cong \mathcal{O}$. Define $E =\mathcal{O} \oplus L$, and let $S = \mathbb{P}(E)$ be the associated ruled surface over $C$. Let $s_{1},s_{2}$ be the sections corresponding to $ \mathbb{P}(\mathcal{O} \oplus 0)$, $ \mathbb{P}( 0 \oplus L)$, respectively. 

Then, there is a short exact sequence:
 $$1 \rightarrow \mathbb{C}^*  \xrightarrow[]{\varphi_1} \mathrm{Aut}_{C}(S) \xrightarrow[]{\varphi_2}  \mathbb{Z}_2 \rightarrow 1 .$$

For an element $g \in \mathrm{Aut}_{C}(S)$, the following hold:
\begin{itemize} 
\item[a)]  $\varphi_{2}(g) =1, \;\; g \neq 1 \iff $ the fixed point set of $g$ is  the disjoint union $s_{1} \cup s_2$.

\item[b)] $\varphi_{2}(g) \neq 1 \iff $ $g^2 =\mathrm{Id}_{S}$, the fixed point set of $g$ is a smooth irreducible bisection of $\pi: S\rightarrow C$, and $g(s_{i}) = s_{j}$ for $i \neq j$.
\end{itemize}
\end{prop}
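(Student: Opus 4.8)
The plan is to pin down $\mathrm{Aut}_C(S)$ completely through the short exact sequence of Lemma \ref{exact}, and then read off the two types of automorphisms together with their fixed loci. First I would compute the two outer terms of $1 \to \mathrm{Aut}(E)/\mathbb{C}^* \to \mathrm{Aut}_C(S)\to\Delta\to 1$. Since $L$ has degree $0$ and is nontrivial, $H^0(C, L) = H^0(C, L^{-1}) = 0$ (using $L^{-1}\cong L$), so a bundle endomorphism of $E = \mathcal{O}\oplus L$ has vanishing off-diagonal entries; hence $\mathrm{Aut}(E)$ consists of the diagonal automorphisms $\mathrm{diag}(a,d)$, giving $\mathrm{Aut}(E)/\mathbb{C}^*\cong\mathbb{C}^*$. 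For $\Delta$, I would invoke uniqueness of the decomposition into non-isomorphic line bundles: $(\mathcal{O}\oplus L)\otimes L'\cong\mathcal{O}\oplus L$ forces the multiset $\{L',\, L\otimes L'\}$ to equal $\{\mathcal{O}, L\}$, whence $L'\in\{\mathcal{O}, L\}$ and $\Delta = \{\mathcal{O}, L\}\cong\mathbb{Z}_2$ (using $L^2\cong\mathcal{O}$). Substituting into Lemma \ref{exact} yields the asserted sequence, with $\varphi_1$ the inclusion of the diagonal torus and $\varphi_2$ the map to $\Delta$.

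Next I would analyze the two cases. The kernel of $\varphi_2$ consists of the automorphisms induced by $\mathrm{diag}(a,d)$, which in fiber coordinates $[z_0:z_1]$ adapted to $E = \mathcal{O}\oplus L$ act as $[z_0:z_1]\mapsto[(a/d)z_0:z_1]$; for $g\neq 1$ this is a nontrivial diagonal transformation fixing exactly $[1:0]$ and $[0:1]$ on every fiber, so $\mathrm{Fix}(g) = s_1\sqcup s_2$, proving the forward direction of (a). For the nontrivial class $L\in\Delta$ I would produce an explicit lift: choosing an isomorphism $\phi: L^2\xrightarrow{\sim}\mathcal{O}$, the anti-diagonal bundle isomorphism $E\xrightarrow{\sim}E\otimes L$ (interchanging the summands and using $\phi$) induces, after the canonical identification $\mathbb{P}(E) = \mathbb{P}(E\otimes L)$, an automorphism which in a local trivialization $\ell$ of $L$ takes the form $[z_0:z_1]\mapsto[z_1:f^{-1}z_0]$, where $f = \phi(\ell^2)$. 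A direct computation gives $g^2 = \mathrm{Id}$ and $\mathrm{Fix}(g) = \{z_0^2 = f z_1^2\}$, a divisor meeting each fiber in the two distinct points $[\pm\sqrt{f}:1]$, and it visibly interchanges $s_1 = \{z_1=0\}$ and $s_2 = \{z_0=0\}$. Every other element with $\varphi_2\neq 1$ is $t\cdot g$ for some $t\in\mathbb{C}^*=\ker\varphi_2$, and the same computation shows it is again an involution with an analogous bisection as fixed locus.

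The main obstacle is the global analysis of the fixed divisor $D = \{z_0^2 = f z_1^2\}$: I must verify it is a smooth \emph{irreducible} bisection rather than a pair of sections. Smoothness and the bisection property are immediate because $f$ is nowhere zero (as $\phi$ is an isomorphism), so the two fixed points never collide and $\pi|_D : D\to C$ is an unramified double cover. Irreducibility is exactly where the hypothesis $L\neq\mathcal{O}$ enters: the transition data ($z_0/z_1$ transforms as a section of $L$ and $f$ as a section of $L^2$) identify $D\to C$ with the \'etale double cover classified by the $2$-torsion class $L\in\mathrm{Jac}(C)[2]$, which is connected precisely when $L$ is nontrivial; hence $D$ is irreducible. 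Finally I would obtain both converse implications from the fact that the fixed-locus descriptions in (a) and (b) are mutually exclusive, together with the observation that $\ker\varphi_2$ preserves each $s_i$ while its complement swaps them: a fixed set equal to $s_1\cup s_2$, or any single one of the three conditions in (b), determines $\varphi_2(g)$ unambiguously.
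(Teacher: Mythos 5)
Your proposal is correct, and it reaches the same structural conclusions as the paper by a genuinely more explicit route. Where you compute $\mathrm{Aut}(E)/\mathbb{C}^*\cong\mathbb{C}^*$ and $\Delta\cong\mathbb{Z}_2$ directly from $H^0(C,L)=0$ and the uniqueness of the splitting into non-isomorphic degree-zero line bundles, the paper simply cites Maruyama's Lemmas 2(2), 4(2) and Theorem 1(3) for the same two facts, and additionally records (via Lemma \ref{intsec}) that $s_1,s_2$ are the \emph{only} sections of self-intersection zero. That last fact is the engine of the paper's argument for part b): it proves $g^2=\mathrm{Id}_S$ by contradiction (if $g^2\neq\mathrm{Id}$ then $\mathrm{Fix}(g)=s_1\cup s_2$, forcing $g$ to preserve the zero section of $S\setminus s_1\cong L$ and hence lie in $\ker\varphi_2$), deduces that $g$ must swap $s_1,s_2$ because no other sections have square zero, and proves irreducibility of $\mathrm{Fix}(g)$ by intersection theory: two disjoint section components would each have positive self-intersection and would then have to meet $s_1\cup s_2$ by the light cone lemma. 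You instead write down an explicit lift $[z_0:z_1]\mapsto[z_1:f^{-1}z_0]$ coming from the anti-diagonal isomorphism $E\to E\otimes L$, verify $g^2=\mathrm{Id}$ and the swap by direct computation, and obtain irreducibility by identifying $\pi|_{\mathrm{Fix}(g)}$ with the \'etale double cover classified by the nontrivial $2$-torsion class $L$. Your route buys an explicit normal form for every element of $\mathrm{Aut}_C(S)$ (essentially re-deriving the matrices $M_\alpha$, $M'_\beta$ that the paper only quotes afterwards) and makes the role of $L\neq\mathcal{O}$ in the connectedness of the bisection completely transparent; the paper's route avoids all local trivializations and generalizes more readily to situations where an explicit lift is awkward to write down.

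One small overstatement to fix: you assert that \emph{any single one} of the three conditions in b) pins down $\varphi_2(g)\neq 1$. That fails for the condition $g^2=\mathrm{Id}_S$ alone, since the diagonal element with $t=-1$ (the matrix $M_{-1}$ in the paper's later notation) is an involution lying in $\ker\varphi_2$ with fixed set $s_1\cup s_2$. This does not damage your proof of the stated equivalence, because the conjunction in b) contains the swap condition (or the irreducible-bisection condition), either of which already forces $\varphi_2(g)\neq 1$; but the sentence as written should be weakened accordingly.
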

\begin{proof}
By \cite[Lemma 2 (2)]{Ma}, we have $M(E) = N(E)=0$, and the only two maximal subbundles of $E$ are $\mathcal{O} \oplus 0$ and $ 0 \oplus L$. Thus, by Lemma \ref{intsec}, the only sections of $S=\mathbb P(E)$ with self-intersection zero are $s_{1}$ and $s_{2}$, and all other section have strictly positive self-intersection. 

Consider the short exact sequence from Lemma \ref{exact}: $$1 \rightarrow \mathrm{Aut}(E)/\mathbb{C}^*  \xrightarrow[]{\varphi_1} \mathrm{Aut}_{C}(S) \xrightarrow[]{\varphi_2}  \Delta \rightarrow 1 .$$  By \cite[Lemma 4(2)]{Ma}, $\Delta \cong \mathbb{Z}_2$.
Note that $$\det(E)^{-1} \otimes L^2 \cong L^{-1} \cong L,$$ which admits no global sections. Hence, by \cite[Theorem 1(3)]{Ma}, Aut$(E)/\mathbb{C}^* \cong \mathbb{C}^*$ (see the proof of \cite[Theorem 2]{Ma}). The induced $\mathbb{C}^*$-action on  $S$ is given by $$z\cdot [(x,y)] = [(zx,y)] ,$$ and  fixes precisely the union $s_{1} \cup s_{2}$. Therefore,  all non-trivial elements of $\ker(\varphi_2) \cong \mathbb{C}^*$ have fixed point set $s_{1} \cup s_{2}$. 

Now let $g\in \mathrm{Aut}_{C}(S) $ be an automorphism with $\varphi_2(g)\ne 1$. We claim that $g^2=\mathrm{id}_S$. Since $\varphi_2(g^2)=\varphi_2(g)^2=1$, it follows that $g^2\in \ker(\varphi_2)$. Suppose for contradiction that $g^2\ne \mathrm{id}_S$, so that $g^2\in \ker(\varphi_2)\setminus \{\mathrm{id}_S\}$. Then Fix$(g^2)=s_1\cup s_2$. 

On each fiber, the action of $g$ is a M\"obius transformation of order $2$, hence an involution with exactly two fixed points.  Since M\"{o}bius transformations have the property that the fixed point set of $g$ is contained in that  of $g^2$, and they are equal  when $g^2$ is non-trivial, we conclude that Fix$(g)=\mathrm{Fix}(g^2)=s_{1} \cup s_2$. 

Now consider the action of $g$  on $S \setminus s_{1}$, which is isomorphic to the total space of the line bundle $L$. On this space, $g$ must act fiberwise by  M\"{o}bius transformations and must preserve the zero section, since $g$ fixes both $s_1$ and $s_2$.  But this implies that $g\in\varphi_{1}(\mathbb{C}^*) = \ker(\varphi_2)$, contradicting our assumption that $\varphi_2(g)\ne 1$.  Therefore, $g^2=\mathrm{id}_S$, as claimed.

The same argument shows that any element $g\notin \ker(\varphi_2)$ cannot fix the union $s_{1} \cup s_{2}$. Since $s_{1}$ and $s_{2}$ are the only sections of $S$ with self-intersection zero, and $g$ is an automorphism over $C$, the only possibility is that $g(s_{1}) = s_{2}$ and $g(s_{2}) = s_{1}$. 

Next, we examine the fixed point set Fix$(g)$. Since $g$ acts fiberwise as an involutive M\"obius transformation, each fiber contains exactly two fixed points under $g$. Thus, Fix$(g)$ is a curve in $S$ that intersects each fiber transversely in two points and is disjoint from $s_{1} \cup s_{2}$. 

Suppose for  contradiction that Fix$(g)$ is reducible and consists of two disjoint irreducible  components. Then each component must be a section, distinct from $s_1$ and $s_2$, and hence must have strictly positive self-intersection. However,  by the light cone lemma, such sections would necessarily intersect one of $s_{1}$ or $s_2$, contradicting the assumption that Fix$(g)$ is disjoint from $s_{1} \cup s_{2}$. Therefore, Fix$(g)$ must be a smooth irreducible bisection.
\end{proof}

The group structure of Aut$_C(S)$ is described in \cite[Theorem 2 (4)]{Ma} and is isomorphic to the subgroup $H\subset \mathrm{PGL}(2, \mathbb{C})$ given by $$H  = \left\{ 
   M_{\alpha}=
  \left[ {\begin{array}{cc}
   \alpha & 0 \\
   0 & 1 \\
  \end{array} } \right] | \alpha \in \mathbb{C}^*
\}  \cup  \{ 
   M'_{\beta}=
  \left[ {\begin{array}{cc}
   0 & \beta \\
   1 & 0 \\
  \end{array} } \right] | \beta \in \mathbb{C}^*
\right\}  .   $$

An element of $H$ lies in the kernel of $\varphi_2$ if and only if it is one of $M_{\alpha}$. 

For convenience, we record the group law as given in (\cite[Page 96, footnote]{Ma}):

$$M_{\alpha}M_{\alpha'} = M_{\alpha \alpha'} \;\;,\;\; 
M'_{\beta}M'_{\beta'} = M_{\beta \beta'^{-1}}    $$

$$M_{\alpha}M'_{\beta} = M'_{\alpha \beta} \;\;,\;\; 
M'_{\beta}M_{\alpha} = M'_{\beta \alpha^{-1}}    $$

We note that every element of the form $M'_{\beta}$ is an involution. The group $H$ is non-Abelian; for instance,  $M'_{-1}$ and $M_{\alpha}$ commute in PGL$(2, \mathbb{C})$ if and only if $\alpha = \pm1$. However, the subgroup consisting of the four matrices $$I_{2},\;\;\; M_{-1},\;\;\; M'_1,\;\;\; M'_{-1} $$ forms a Klein $4$-group isomorphic to $\mathbb{Z}_2 \times \mathbb{Z}_2$.

This  $\mathbb{Z}_2 \times \mathbb{Z}_2$ action may be generalized as follows:

\begin{example} \label{dihedralexample}
Consider the set: $$D =\left\{ M_{\alpha},M'_{\beta} \mid  \alpha^n=\beta^n=1\right\} \subset H.$$ It is not hard to check that $D$ is closed under the group operation, and therefore is a subgroup. Moreover, $D$ is isomorphic to a dihedral group of order $2n$. 
\end{example}

For the action of the above subgroup on $S$, the fixed point set of $n-1$ of its elements is a disjoint union of sections, while the fixed point set of the $n$  involutive elements $M'_{\beta}$ is an irreducible bisection.

Finally, we give an example of an action of the Klein $4$-group where every non-trivial element in $G = \mathbb{Z}_2 \times  \mathbb{Z}_2 $ fixes an irreducible bisection.

The exact sequence recalled in Lemma \ref{exact} will be important here. The following lemma gives a sufficient condition for an element $g \in \hbox{Aut}_{C}(\mathbb{P}(E))$ to be contained in the first term of this short exact sequence. Recall that a section of a geometrically ruled surface is a morphism $s: C \rightarrow \mathbb{P}(E)$ such that $\pi \circ s = Id_{C}$.

\begin{lemma}  \label{cyclic splitting}
Let $S = \mathbb{P}(E)$ be a geometrically ruled surface. Let $g \in Aut_{C}(S)$ be a finite order element such that Fix$(g)$ is a disjoint union of two sections. Then,  the following hold: \begin{enumerate}

\item $E$ splits as a sum of line bundles $E \cong L_{1} \oplus L_{2}$.

\item In the notation of short exact sequence of Lemma \ref{exact}, $g \in Aut(E)/\mathbb{C}^*$.

 \end{enumerate}
\end{lemma}
\begin{proof}
A holomorphic section $s: C \rightarrow \mathbb{P}(E)$ naturally defines a line subbundle $L \subset E$ \cite[Lemma 1.14]{Ma2}.  The two disjoint sections in Fix$(g)$ provide a splitting $E \cong L_{1} \oplus L_{2}$, such that Fix$(g) = [L_{1} \oplus \{0\}] \cup  [\{0\} \oplus L_{2}]$ (see \cite[Page 96, Footnote 1]{Ma}). Moreover, since $g$ acts by restricted M\"{o}bius transformations on $\mathbb{P}^1$ fibers fixing two points in the corresponding sections, it is contained in $Aut(E)/\mathbb{C}^*$. 
\end{proof}

The examples are obtained as an elementary transformation in three generic points of a product projective bundle over an elliptic curve $C$. Recall that for a geometrically ruled surface $\pi: S \rightarrow C$ and a point $q \in S$, the elementary transformation of $S$ in $q$ is a surface $S'$ obtained by blowing up $q$ and blowing down the strict transform of the $\mathbb{P}^1$-fiber containing $q$.

\begin{prop} \label{threebisection}
Let $C$ be an elliptic curve. Let $S$ be the geometrically ruled surface obtained from $C \times \mathbb{P}^1$ by performing elementary transformations at three points $q_1,q_2,q_3$, whose projections to both  $C$ and  $\mathbb{P}^1$ are pairwise distinct. Then, Aut$_{C}(S) \cong \mathbb{Z}_2 \times \mathbb{Z}_2$, and for every non-trivial element $g \in \mathbb{Z}_2 \times \mathbb{Z}_2$, the fixed locus Fix$(g)$ is an irreducible bisection. In particular, the involutions corresponding to the three non-trivial elements in $\mathbb{Z}_2 \times \mathbb{Z}_2$ do not extend to symplectic $S^1$-actions.
\end{prop}
\begin{proof}
Let $S = \mathbb{P}(E)$. By \cite[Theorem 4.7]{Ma2}, the bundle $E$ is indecomposable and $N(E)=1$.  By \cite[Theorem 3 (4)]{Ma}, one has Aut$_{C}(S) = \mathbb{Z}_2 \times \mathbb{Z}_2$.  For each non-trivial $g \in \mathbb{Z}_2 \times \mathbb{Z}_2$, the fixed point set Fix$(g)$ is either a union of two sections or an irreducible bisection. The former possibility is excluded by Lemma \ref{cyclic splitting}. Hence, Fix$(g)$ is a smooth irreducible bisection. Therefore, these involutions cannot extend to symplectic circle actions, because by  \cite[Proposition 5.1.7]{MS} the circle action must be Hamiltonian and the fixed point set must have at least two connected components.
\end{proof}

Considering a section $C \times \{p\} \subset C \times \mathbb{P}^1$, which is disjoint from $\{q_1,q_2,q_3\}$, the self-intersection of the strict transform in $S$ will be a section with self-intersection $3$. Therefore $S$ is topologically the non-trivial $S^2$-bundle over $C$.

\end{document}